\newtheorem{thm}{Theorem}[section]
\newtheorem{cor}[thm]{Corollary}
\newtheorem{lem}[thm]{Lemma}
\newtheorem{prop}[thm]{Proposition}
\theoremstyle{definition}
\newtheorem{defn}[thm]{Definition}
\newtheorem{example}[thm]{Example}
\theoremstyle{remark}
\newtheorem{rem}[thm]{Remark}
\numberwithin{equation}{section}
\begin{document}
\title[Disjoint distributional chaos in Fr\' echet spaces]{Disjoint distributional chaos in Fr\' echet spaces}

\author{Marko Kosti\' c}
\address{Faculty of Technical Sciences,
University of Novi Sad,
Trg D. Obradovi\' ca 6, 21125 Novi Sad, Serbia}
\email{marco.s@verat.net}

{\renewcommand{\thefootnote}{} \footnote{2010 {\it Mathematics
Subject Classification.} 47A06, 47A16, 54H20.
\\ \text{  }  \ \    {\it Key words and phrases.} Disjoint distributional chaos; disjoint irregular vectors; multivalued linear operators; backward shift operators; weighted translation operators on locally compact groups; Fr\' echet spaces.
\\  \text{  }  \ \ The author was partially supported by grant 174024 of Ministry of Science and Technological Development, Republic of Serbia.}}

\begin{abstract}
We introduce several different notions of disjoint distributional chaos for sequences of multivalued linear operators in Fr\'echet spaces. Any of these notions seems to be new
and not considered elsewhere even for linear continuous operators in Banach spaces. We focus special attention to the analysis of some specific classes
of linear continuous operators having a certain disjoint distributionally chaotic behaviour, providing also a great number of  illustrative examples and applications of our abstract theoretical results.
\end{abstract}
\maketitle

\section{Introduction and Preliminaries}\label{intro}

Let $X$ be a separable Fr\' echet space. A
linear operator $T$ on $X$ is said to be hypercyclic iff there
exists an element $x\in D_{\infty}(T)\equiv \bigcap_{n\in {\mathbb
N}}D(T^{n})$ whose orbit $\{ T^{n}x : n\in {{\mathbb N}}_{0} \}$ is
dense in $X;$ $T $ is said to be topologically transitive,
resp. topologically mixing, iff for every pair of open
non-empty subsets $U,\ V$ of $X,$ there exists $n_{0}\in {\mathbb
N}$ such that $T^{n_{0}}(U) \ \cap \ V \neq \emptyset ,$ resp. there
exists $n_{0}\in {\mathbb N}$ such that, for every $n\in {\mathbb
N}$ with $n\geq n_{0},$ $T^{n}(U) \ \cap \ V \neq \emptyset .$ Finally, $T$ is said to be chaotic iff it is topologically transitive and the set of periodic points of $T,$ defined by $P_{T}:=\{x\in D_{\infty}(T) : (\exists n\in {\mathbb N})\, T^{n}x=x\},$ is dense in $X.$

The notion of distributional chaos for interval maps was introduced for the first time by B. Schweizer and J. Sm\' ital in \cite{smital} (this type of chaos was called strong chaos there, 1994). In the setting of linear continuous operators, 
distributional chaos was firstly considered in the research studies of quantum harmonic oscillator, by J. Duan et al \cite{duan} (1999) and P. Oprocha \cite{p-oprocha} (2006). The first systematic study of distributional chaos for linear continuous operators in Fr\' echet spaces was conducted by 
N. C. Bernardes Jr. et al \cite{2013JFA} (2013), while the first systematic study of distributional chaos for linear, not necessarily continuous, operators in Fr\' echet spaces was conducted by J. A. Conejero et al \cite{mendoza} (2016). Further information about distributional chaos in metric and Fr\' echet spaces can be obtained by consulting \cite{2011}, \cite{2018JMAA}-\cite{ARXIV}, \cite{chen-chen}, \cite{gimenez-p}-\cite{gimenez}, \cite{fu-tan}-\cite{fu-wang}, \cite{marek-trio}, \cite{luo}, \cite{p-oprocha-tams}-\cite{p-oprocha}, \cite{smital-duo}, \cite{tan-fu} and references cited therein.

On the other hand, the notion of disjoint hypercyclicity in linear topological dynamics was introduced independently by L. Bernal--Gonz\'alez \cite{bg07} (2007) and J. B\`es, A. Peris \cite{bp07} (2007). From then on, a great number of authors has analyzed this important notion and similar concepts. For more details about disjoint hypercyclic operators and their generalizations, one may refer e.g. to \cite{bms14}, \cite{bm201345}-\cite{chen-marek}, \cite{jusef}, \cite{knjigaho}-\cite{FKP}, \cite{liang-dis-super}, \cite{ma10}-\cite{om-rs}, \cite{y-puig}-\cite{salasinjo}, \cite{shkarin} and references cited therein.

The basic facts about topological dynamics of linear continuous operators in Banach and Fr\' echet spaces can be obtained by consulting the monographs \cite{bayart} by F. Bayart, E. Matheron and
\cite{erdper} by K.-G. Grosse-Erdmann, A. Peris. In a joint research study with J. A. Conejero, C.
-C. Chen and M. Murillo-Arcila \cite{kerry-drew}, the author has recently introduced and analyzed a great deal of topologically dynamical properties for multivalued linear operators (see also \cite{abakumov}, \cite{kd-prim} and \cite{bohemica}). Concerning distributional chaos and its generalizations for sequences of multivalued linear operators in Fr\' echet spaces, the first step has been made recently by M. Kosti\' c in \cite{marek-trio}.

Up to now, it was not clear how to define the notion of disjoint distributional chaos (in both, linear and non-linear setting). The main aim of this paper is to propose several different definitions of (subspace) disjoint distributional chaos for multivalued linear operators and their sequences in both, finite and infinite dimensional Fr\' echet spaces (the study of finite dimensional spaces is important and gives new theoretical insights at the notion of distributional chaos that has not been considered in \cite{2013JFA} and \cite{mendoza}; see Example \ref{totan} and Example \ref{totanr} below). As mentioned in the abstract, any of these notions of disjoint distributional chaos seems to be new even for linear continuous operators acting on Banach function spaces. 
We apply our abstract results to a great number of concrete classes of operators, like the backward shift operators on Fr\' echet sequence spaces, weighted translation operators on Orlicz spaces of locally compact groups and bounded differential operators on Fr\' echet spaces of entire functions. 
We also present numerous examples of disjoint distributionally chaotic linear unbounded differential operators, continuing thus our research study raised in \cite{mendoza}, and disjoint distributionally chaotic multivalued linear operators. For the sake of brevity and better exposition, disjoint distributionally chaotic extensions of multivalued linear
operators (cf. \cite{kerry-drew}-\cite{kd-prim} and \cite{marek-faac} for similar concepts), disjoint distributionally chaotic properties of continuous operators on metric spaces (cf. \cite{fu-tan}-\cite{fu-wang}, \cite{p-oprocha-tams}, \cite{smital-duo} and \cite{tan-fu} for related references concerning distributional chaos) as well as  
distributionally chaotic properties of abstract (degenerate) partial differential equations with integer or fractional order time-derivatives will not be considered within the framework of this paper (cf. \cite{angel}, \cite{bara}-\cite{bara-duo}, \cite{mendoza}, \cite{FKP}-\cite{marek-nsjom} and references quoted therein for further information in this direction).

The organization and main ideas of this paper can be described as follows. After collecting some preliminary results and facts about Fr\' echet spaces and upper densities, in Section \ref{MLOs} we recall the basic facts and definitions from the theory of multivalued linear operators (MLOs, in the sequel) that will be necessary for our further work; in a separate subsection, we consider distributionally chaotic properties of multivalued linear operators and their sequences. The first aim of third section is to fix the notion of $(d,\tilde{X},i)$-distributional chaos for 
MLOs and their sequences; here, $\tilde{X}$ is a closed linear subspace of $X$ and
$i\in {\mathbb N}_{12}$  (see Definition \ref{DC-unbounded-fric-DISJOINT}). In other words, we introduce and analyze twelve different types of disjoint distributional chaos here.
The notion of $(d,\tilde{X},1)$-distributional chaos
is the strongest one, while the notion of
$(d,\tilde{X},i)$-distributional chaos for $i\in \{2,3,7\}$ is incredibly important because this type of disjoint distributional chaos implies the $\tilde{X}$-distributional chaos of any single component (that is a sequence of MLOs, in general) under our consideration;
$(d,\tilde{X},i)$-distributional chaos for $i\in \{4,5,6,8\}$  implies the existence of a single component that is $\tilde{X}$-distributional chaos (see Proposition \ref{trick} and Remark \ref{prc-qwe}(i)). Besides that, we have found the notion of $(d,\tilde{X},9)$-distributional chaos very intriguing because the existence of this type of disjoint distributional chaos has some obvious connections with the existence of $\tilde{X}$-distributional chaos for the induced diagonal mapping $X\rightarrow Y^{N}$ (see Proposition \ref{prc-qwe} for more details).
Before we go any further, we need to say that other types of $(d,\tilde{X},i)$-distributional chaos, although introduced, are very mild and 
interested only from some theoretical point of view; it is also worth saying that we will not discuss here the notion of full $(d,\tilde{X},i)$-distributional chaos, in which the corresponding $\sigma$-scramled set $S$ can be chosen to be the whole manifold $\tilde{X}$ (cf. \cite{bara} and \cite{gimenez} for some results established in this direction).

In our approach, we work with four different types of disjoint distributionally unbounded vectors and 
four different types of disjoint distributionally near to $0$ vectors (see Definition \ref{DC-zero-DISJOINT} and Definition \ref{DC-unbounded-DISJOINT}), while for each type of 
$(d,\tilde{X},i)$-distributional chaos there exists exactly one type of disjoint distributionally irregular vectors accompanied (see Definition \ref{iregularni-prc}).
Concerning the existence of distributionally unbounded vectors, in multivalued linear setting we recognise some important differences between Banach spaces and Fr\' echet spaces (see Example \ref{gos}).
After consideration in this example, we first note that some very weak forms of disjoint distributional chaos can be analyzed for MLOs and their sequences by slightly modifying the notion from Definition \ref{DC-unbounded-fric-DISJOINT} (see Example \ref{deds-MLOS-vg}
for an illustration of this fact for purely MLOs). Following the approach obeyed in the paper \cite{bp07} by J. B\`es and A. Peris, we introduce and explain the importance of (uniformly) $(d,\tilde{X},i)$-distributionally irregular manifolds for MLOs.
In Example \ref{2}-Example \ref{12}, we present several counterexamples showing that
the notions of $(d,i_{1})$-distributional chaos and $(d,i_{2})$-distributional chaos are not the same for different values of indexes $i_{1}$ and $i_{2}.$ We close the third section of paper by stating Proposition \ref{subspace-fric-DISJOINT}, which rewords a corresponding result from \cite{mendoza} for disjointness and which particularly shows that the case $\tilde{X}=X$ can be always assumed in a certain sense. 

In the fourth section of paper, which contains our main theoretical contributions, we continue our analyses from \cite{mendoza} by enquiring into certain possibilities to extend the structural results from the foundational paper  
\cite{2013JFA} by N. C. Bernardes Jr. et al. Our first structural result is Theorem \ref{sequences}, in which we prove
the existence of a
$(d,1)$-distributionally irregular vector for a corresponding sequence
$((T_{j,k})_{k\in {\mathbb N}})_{1\leq j\leq N}$ of linear continuous operators acting between not necessarily the same pivot Fr\' echet spaces (in this section, we consider only single-valued linear operators but the continuity is neglected sometimes). The proof of this result leans heavily on the use
of the arguments contained in the proofs of \cite[Proposition 7, Proposition 9]{2013JFA}.
It is worth noting that Theorem \ref{sequences} can be formulated for any other type of $(d,i)$-distributionally irregular vectors.
We later observe that
the proof of implication (iv) $\Rightarrow$ (iii) in \cite[Theorem 15]{2013JFA} (cf. also the second part of \cite[Theorem 3.7]{kerry-drew}) can be 
used for proving the dense $(d,1)$-distributional chaos of linear continuous operators (see Theorem \ref{mmnn}). For linear operators, 
we can use Theorem \ref{sequences} and Theorem \ref{mmnn} as well as the well-known process of regularization to deduce new important results, Corollary \ref{cea1} and Corollary \ref{mmn} (it seems that the theory of $C$-regularized semigroups has to be applied here; see the paper \cite{ralphchaos} by R. deLaubenfels, H. Emamirad and K.-G. Grosse--Erdmann for an initial idea in this direction, and
\cite{knjigah}-\cite{FKP} for further information concerning $C$-regularized semigroups). Some applications of the above results to differential operators are presented in Example \ref{qaz} and Example \ref{lupa}. Motivated by the research of V. M\"uller and J. Vr\v sovsk\'y \cite{milervrs}, at the end of this section
we analyze the existence of various types of $(d,i)$-distributionally unbounded vectors for the sequences of linear operators (see Proposition \ref{orbits-rija} and Proposition \ref{owq-prcko} for bounded operators in Banach spaces, as well as Corollary \ref{ora-rija} and Corollary \ref{ora-rija-3} for unbounded operators in Banach spaces). We present an instructive application to unbounded backward shift operators in Theorem \ref{rikardinjo} and Example \ref{primena-shifts} (to our best knowledge, (disjoint) distributional chaos for such operators has not been examined elsewhere by now).

The fifth section of paper aims to investigate some special classes of operators having a certain disjoint distributionally chaotic
behaviour. In particular, we analyze backward shift operators on Fr\' echet sequence spaces and weighted translation operators on Orlicz spaces of locally compact groups (in this section, we deduce several proper extensions of results from \cite[Section 4]{2013JFA}). 
It is shown that the $(d,1)$-distributional chaos of a tuple of unilateral backward shift operators is not equivalent with distributional chaos of single components taken separately (see Example \ref{bruk}).


We use the standard notation throughout the paper. We assume that $X$ and $Y$ are two
non-trivial Fr\' echet space over the same field of scalars ${\mathbb K}\in \{ {\mathbb R}, {\mathbb C} \}$ as well as that the topologies of $X$ and $Y$ are 
induced by the fundamental systems $(p_{n})_{n\in {\mathbb N}}$ and $(p_{n}^{Y})_{n\in {\mathbb N}}$ of
increasing seminorms, respectively (separability of $X$ or $Y$ is not assumed a priori in future). Then the translation invariant metric\index{ translation invariant metric} $d :
X\times X \rightarrow [0,\infty),$ defined by
\begin{equation}\label{metri}
d(x,y):=\sum
\limits_{n=1}^{\infty}\frac{1}{2^{n}}\frac{p_{n}(x-y)}{1+p_{n}(x-y)},\
x,\ y\in X,
\end{equation}
enjoys the following properties:
$
d(x+u,y+v)\leq d(x,y)+d(u,v),$ $x,\ y,\ u,\
v\in X;
$
$d(cx,cy)\leq (|c|+1)d(x,y),$ $ c\in {\mathbb K},\ x,\ y\in X,
$
and
$
d(\alpha x,\beta x)\geq \frac{|\alpha-\beta|}{1+|\alpha-\beta|}d(0,x),$ $x\in X,$ $ \alpha,\ \beta \in
{\mathbb K}.$
For any $\epsilon>0$ given in advance, set $L(0,\epsilon):=\{x\in X : d(x,0)<\epsilon\}.$
Define the translation invariant metric $d_{Y} :
Y\times Y \rightarrow [0,\infty)$ by replacing $p_{n}(\cdot)$ with
$p_{n}^{Y}(\cdot)$ in (\ref{metri}).
If
$(X,\|\cdot \|)$ or $(Y,\|\cdot \|_{Y})$ is a Banach space, then we assume that the
distance of two elements $x,\ y\in X$ ($x,\ y\in Y$) is given by $d(x,y):=\|x-y\|$ ($d_{Y}(x,y):=\|x-y\|_{Y}$).
Keeping in mind this terminological change,
our structural results clarified in Fr\' echet spaces continue to hold in the case that $X$ or $Y$ is a Banach space. 

Throughout this paper, it will be assumed that $N\in {\mathbb N}$ and $N\geq 2.$ Then the fundamental system of increasing seminorms $({\bf p}_{n}^{Y^{N}})_{n\in {\mathbb N}},$ 
where ${\bf p}_{n}^{Y^{N}}(x_{1},\cdot \cdot \cdot,x_{N}):=\sum_{j=1}^{N}p_{n}^{Y}(x_{j}),$ $n\in {\mathbb N}$ ($x_{j}\in Y$ for $1\leq j\leq N$), induces the topology on the Fr\' echet space $Y^{N}.$ The translation invariant metric
\begin{align*}
{\rm d}_{Y^{N}}(\vec{x},\vec{y}):=\sum
\limits_{n=1}^{\infty}\frac{1}{2^{n}}\frac{{\bf p}_{n}(\vec{x}-\vec{y})}{1+{\bf p}_{n}(\vec{x}-\vec{y})},\quad
\vec{x},\ \vec{y}\in Y^{N},
\end{align*}
is strongly equivalent with the metric 
$$
d_{Y^{N}}(\vec{x},\vec{y}):=\max_{1\leq j\leq N}d_{Y}(x_{j},y_{j}),\quad \vec{x}=(x_{1},\cdot \cdot \cdot,x_{N})\in Y^{N},\ \vec{y}=(y_{1},\cdot \cdot \cdot,y_{N})\in Y^{N},
$$
since
\begin{align}\label{metri-sa}
d_{Y^{N}}(\vec{x},\vec{y})\leq {\rm d}_{Y^{N}}(\vec{x},\vec{y}) \leq N^{2}d_{Y^{N}}(\vec{x},\vec{y}),\quad \vec{x}\in Y^{N},\ \vec{y}\in Y^{N}.
\end{align}
For the sake of completeness, we will prove the second inequality in \eqref{metri-sa}. Let the vectors $\vec{x}\in Y^{N},$ $\vec{y}\in Y^{N}$ be given, and let for each $n\in {\mathbb N}$ the number
$j_{n}\in {\mathbb N}_{N}$ be such that $p_{n}(x_{j_{n}}-y_{j_{n}})=\max_{1\leq j\leq N}p_{n}(x_{j}-y_{j}).$
Then 
\begin{align*}
{\rm d}_{Y^{N}}(\vec{x},\vec{y})\leq
\sum \limits_{n=1}^{\infty}\frac{N}{2^{n}}\frac{p_{n}(x_{j_{n}}-y_{j_{n}})}{1+p_{n}(x_{j_{n}}-y_{j_{n}})}.
\end{align*}
Set $M_{j}:=\{n\in {\mathbb N} : j_{n}=j\}$ ($j\in {\mathbb N}_{N}$).
Then $\bigcup_{j\in {\mathbb N}_{N}}M_{j}={\mathbb N}$ and the above inequality yields
\begin{align*}
{\rm d}_{Y^{N}}&(\vec{x},\vec{y})\leq N\Biggl[
\sum \limits_{n\in M_{1}}\frac{1}{2^{n}}\frac{p_{n}(x_{1}-y_{1})}{1+p_{n}(x_{1}-y_{1})}+\cdot \cdot \cdot +\sum \limits_{n\in M_{N}}\frac{1}{2^{n}}\frac{p_{n}(x_{N}-y_{N})}{1+p_{n}(x_{N}-y_{N})}\Biggr]
\\ \leq & N\Biggl[
\sum \limits_{n=1}^{\infty}\frac{1}{2^{n}}\frac{p_{n}(x_{1}-y_{1})}{1+p_{n}(x_{1}-y_{1})}+\cdot \cdot \cdot +\sum \limits_{n=1}^{\infty}\frac{1}{2^{n}}\frac{p_{n}(x_{N}-y_{N})}{1+p_{n}(x_{N}-y_{N})}\Biggr]
\\ = & N\Bigl[ d_{Y}(x_{1},y_{1})+\cdot \cdot \cdot +d_{Y}(x_{N},y_{N})\Bigr]\leq  N^{2}d_{Y^{N}}(\vec{x},\vec{y}),
\end{align*}
as claimed. In the case that $Y$ is a Banach space, then $Y^{N}$ is likewise a Banach space and, in this case, it will be assumed that the distance in $Y^{N}$ is given by $d_{Y^{N}}(\vec{x},\vec{y})=\max_{1\leq j\leq N} \|x_{j}-y_{j}\|_{Y},$ $ \vec{x}\in Y^{N},$ $ \vec{y}\in Y^{N}.$

Let $C\in L(X)$ be injective. Put $p_{n}^{C}(x):=p_{n}(C^{-1}x),$ $n\in {\mathbb N},$ $x\in R(C).$ Then
$p_{n}^{C}(\cdot)$ is a seminorm on $R(C)$ and the calibration
$(p_{n}^{C})_{n\in {\mathbb N}}$ induces a Fr\' echet locally convex topology on
$R(C);$ we denote this space simply by $[R(C)].$ Notice
that $[R(C)]$ is a Banach space (complex Hilbert space) provided that $X$ is.

Set $S_{1}:=\{z\in {\mathbb C} : |z|=1\},$
$\Sigma_{\alpha}:=\{z\in {\mathbb C} : z\neq 0,\ |\arg(z)|<\alpha \}$ ($\alpha \in (0,\pi]$), $\lceil s \rceil:=\inf\{k\in {\mathbb Z} : s\leq k\}$ and
${\mathbb N}_{n}:=\{1,\cdot \cdot \cdot,n\}$ ($s\in {\mathbb R},$ $n\in {\mathbb N}$).
 Let us recall that the upper density of a set $D\subseteq {\mathbb N}$ is defined
by
$$
\overline{dens}(D):=\limsup_{n\rightarrow
+\infty}\frac{card(D \cap [1,n])}{n}.
$$

\section{Multivalued linear operators}\label{MLOs}

In this section, we present a brief overview
of the necessary definitions and properties of multivalued linear operators\index{multivalued linear operator!MLO}. For more details on the subject, we refer the reader to the monographs  \cite{cross} by R. Cross, \cite{faviniyagi} by A. Favini, A. Yagi, and \cite{FKP} by M. Kosti\' c.

Let $X$ and $Y$ be two Fr\' echet spaces over the same field of scalars.
A multivalued map (multimap) ${\mathcal A} : X \rightarrow P(Y)$ is said to be a multivalued
linear operator (MLO) iff the following holds:
\begin{itemize}
\item[(i)] $D({\mathcal A}) := \{x \in X : {\mathcal A}x \neq \emptyset\}$ is a linear subspace of $X$;
\item[(ii)] ${\mathcal A}x +{\mathcal A}y \subseteq {\mathcal A}(x + y),$ $x,\ y \in D({\mathcal A})$
and $\lambda {\mathcal A}x \subseteq {\mathcal A}(\lambda x),$ $\lambda \in {\mathbb K},$ $x \in D({\mathcal A}).$
\end{itemize}
If $X=Y,$ then we say that ${\mathcal A}$ is an MLO in $X.$
An almost immediate consequence of definition is that ${\mathcal A}x +{\mathcal A}y = {\mathcal A}(x+y)$ for
all $ x,\ y \in D({\mathcal A})$ and $\lambda {\mathcal A}x = {\mathcal A}(\lambda x)$ for all $x \in D({\mathcal A}),$ $ \lambda \neq 0.$ Furthermore,
for any $x,\ y\in D({\mathcal A})$ and $\lambda,\ \eta \in {\mathbb K}$ with $|\lambda| + |\eta| \neq 0,$ we
have $\lambda {\mathcal A}x + \eta {\mathcal A}y = {\mathcal A}(\lambda x + \eta y).$ If ${\mathcal A}$ is an MLO, then ${\mathcal A}0$ is a linear manifold in $Y$
and ${\mathcal A}x = f + {\mathcal A}0$ for any $x \in D({\mathcal A})$ and $f \in {\mathcal A}x.$ Set $R({\mathcal A}):=\{{\mathcal A}x :  x\in D({\mathcal A})\}.$
The set ${\mathcal A}^{-1}0 = \{x \in D({\mathcal A}) : 0 \in {\mathcal A}x\}$ is called the kernel\index{multivalued linear operator!kernel}
of ${\mathcal A}$ and it is denoted henceforth by $N({\mathcal A})$ or Kern$({\mathcal A}).$ The inverse ${\mathcal A}^{-1}$ of an MLO is defined by
$D({\mathcal A}^{-1}) := R({\mathcal A})$ and ${\mathcal A}^{-1} y := \{x \in D({\mathcal A}) : y \in {\mathcal A}x\}$.\index{multivalued linear operator!inverse}
It is checked at once that ${\mathcal A}^{-1}$ is an MLO in $X,$ as well as that $N({\mathcal A}^{-1}) = {\mathcal A}0$
and $({\mathcal A}^{-1})^{-1}={\mathcal A}.$ If $N({\mathcal A}) = \{0\},$ i.e., if ${\mathcal A}^{-1}$ is
single-valued, then ${\mathcal A}$ is said to be injective. It is worth noting that ${\mathcal A}x = {\mathcal A}y$ for some two elements $x$ and $y\in D({\mathcal A}),$
iff ${\mathcal A}x \cap {\mathcal A}y \neq \emptyset;$ moreover, if ${\mathcal A}$ is injective, then the equality ${\mathcal A}x = {\mathcal A}y$ holds iff $x = y.$

For any mapping ${\mathcal A}: X \rightarrow P(Y)$ we define $\check{{\mathcal A}}:=\{(x,y) : x\in D({\mathcal A}),\ y\in {\mathcal A}x\}.$ Then ${\mathcal A}$ is an MLO iff $\check{{\mathcal A}}$ is a linear relation in $X\times Y,$ i.e., iff $\check{{\mathcal A}}$ is a linear subspace of $X \times Y.$ In our work, we will occasionally identify ${\mathcal A}$ and its associated linear relation $\check{{\mathcal A}}$.

If ${\mathcal A},\ {\mathcal B} : X \rightarrow P(Y)$ are two MLOs, then we define its sum ${\mathcal A}+{\mathcal B}$ by $D({\mathcal A}+{\mathcal B}) := D({\mathcal A})\cap D({\mathcal B})$ and $({\mathcal A}+{\mathcal B})x := {\mathcal A}x +{\mathcal B}x,$ $x\in D({\mathcal A}+{\mathcal B}).$
It can be simply verified that ${\mathcal A}+{\mathcal B}$ is likewise an MLO. We write ${\mathcal A} \subseteq {\mathcal B}$ iff $D({\mathcal A}) \subseteq D({\mathcal B})$ and ${\mathcal A}x \subseteq {\mathcal B}x$
for all $x\in D({\mathcal A}).$ 
Let us recall that ${\mathcal A}$ is called purely multivalued iff ${\mathcal A}0\neq \{0\}.$ \index{multivalued linear operator!sum}

Let ${\mathcal A} : X \rightarrow P(Y)$ and ${\mathcal B} : Y\rightarrow P(Z)$ be two MLOs, where $Z$ is a Fr\' echet space over the same field of scalars as $X$ and $Y$. The product of ${\mathcal A}$
and ${\mathcal B}$ is defined by $D({\mathcal B}{\mathcal A}) :=\{x \in D({\mathcal A}) : D({\mathcal B})\cap {\mathcal A}x \neq \emptyset\}$ and\index{multivalued linear operator!product}
${\mathcal B}{\mathcal A}x:=
{\mathcal B}(D({\mathcal B})\cap {\mathcal A}x).$ Then ${\mathcal B}{\mathcal A} : X\rightarrow P(Z)$ is an MLO and
$({\mathcal B}{\mathcal A})^{-1} = {\mathcal A}^{-1}{\mathcal B}^{-1}.$ The scalar multiplication of an MLO ${\mathcal A} : X\rightarrow P(Y)$ with the number $z\in {\mathbb K},$ $z{\mathcal A}$ for short, is defined by
$D(z{\mathcal A}):=D({\mathcal A})$ and $(z{\mathcal A})(x):=z{\mathcal A}x,$ $x\in D({\mathcal A}).$ It is clear that $z{\mathcal A}  : X\rightarrow P(Y)$ is an MLO and $(\omega z){\mathcal A}=\omega(z{\mathcal A})=z(\omega {\mathcal A}),$ $z,\ \omega \in {\mathbb K}.$

Suppose that $X'$ is a linear subspace of $X,$ and ${\mathcal A} : X\rightarrow P(Y)$ is an MLO. Then we define the restriction of operator ${\mathcal A}$ to the subspace $X',$ ${\mathcal A}_{|X'}$ for short, by $D({\mathcal A}_{|X'}):=D({\mathcal A}) \cap X'$ and ${\mathcal A}_{|X'}x:={\mathcal A}x,$ $x\in D({\mathcal A}_{|X'}).$\index{multivalued linear operator!restriction}
Clearly, ${\mathcal A}_{|X'} : X\rightarrow P(Y)$ is an MLO.

The integer powers of an MLO ${\mathcal A} :  X\rightarrow P(X)$ are defined recursively as follows: ${\mathcal A}^{0}=:I;$ if ${\mathcal A}^{n-1}$ is defined, set\index{multivalued linear operator!integer powers}
$$
D({\mathcal A}^{n}) := \bigl\{x \in  D({\mathcal A}^{n-1}) : D({\mathcal A}) \cap {\mathcal A}^{n-1}x \neq \emptyset \bigr\},
$$
and
$$
{\mathcal A}^{n}x := \bigl({\mathcal A}{\mathcal A}^{n-1}\bigr)x =\bigcup_{y\in  D({\mathcal A}) \cap {\mathcal A}^{n-1}x}{\mathcal A}y,\quad x\in D( {\mathcal A}^{n}).
$$
We can prove inductively that $({\mathcal A}^{n})^{-1} = ({\mathcal A}^{n-1})^{-1}{\mathcal A}^{-1} = ({\mathcal A}^{-1})^{n}=:{\mathcal A}^{-n},$ $n \in {\mathbb N}$
and $D((\lambda-{\mathcal A})^{n})=D({\mathcal A}^{n}),$ $n \in {\mathbb N}_{0},$ $\lambda \in {\mathbb C}.$ Moreover,
if ${\mathcal A}$ is single-valued, then the above definition is consistent with the usual definition of powers of ${\mathcal A}.$ Set $D_{\infty}({\mathcal A}):=\bigcap_{n\in {\mathbb N}}D({\mathcal A}^{n}).$

Suppose that ${\mathcal A}$ is an MLO in $ X.$ Then we say that a point $\lambda \in {\mathbb C}$ is an eigenvalue of ${\mathcal A}$
iff there exists a vector $x\in X\setminus \{0\}$ such that $\lambda x\in {\mathcal A}x;$ we call $x$ an eigenvector of operator ${\mathcal A}$ corresponding to the eigenvalue $\lambda.$ Observe that, in purely multivalued case, a vector $x\in X\setminus \{0\}$ can be an  eigenvector of operator ${\mathcal A}$ corresponding to different values of scalars $\lambda.$ The point spectrum of ${\mathcal A},$ $\sigma_{p}({\mathcal A})$ for short, is defined as the union of all eigenvalues of ${\mathcal A}.$


In our work, the important role has the multivalued linear operator $B^{-1}A,$ where $A$ and $B$ are single-valued linear operators acting between the spaces $X$ and $Y,$
and $B$ is not necessarily injective. Then $B^{-1}A=\{( x,y) \in X \times X : Ax=By\}$ is an MLO in $X.$ 

\subsection{Distributional chaos for MLOs}\label{marek-MLOs}

In the following definition, we recall the notion of $\tilde{X}$-distributional chaos for MLOs and their sequences (cf. \cite{marek-trio}, and \cite[Definition 3.1]{mendoza} for single-valued linear case).

\begin{defn}\label{DC-unbounded-fric} 
Suppose that, for every $k\in {\mathbb N},$ ${\mathcal A}_{k} : D({\mathcal A}_{k}) \subseteq X \rightarrow Y$ is an MLO and $\tilde{X}$ is a closed linear
subspace of $X.$ Then we say that the sequence $({\mathcal A}_{k})_{k\in
{\mathbb N}}$ is
$\tilde{X}$-distributionally chaotic\index{$\tilde{X}$-distributionally chaotic sequence of operators} iff there exist an uncountable
set $S\subseteq \bigcap_{k=1}^{\infty} D({\mathcal A}_{k}) \cap \tilde{X}$ and
$\sigma>0$ such that for each $\epsilon>0$ and for each pair $x,\
y\in S$ of distinct points we have that for each $k\in {\mathbb N}$ there exist elements $x_{k}\in {\mathcal A}_{k}x$ and $y_{k} \in {\mathcal A}_{k}y$ such that the following equalities hold:
\begin{align*}
\begin{split}
& \overline{dens}\Bigl( \bigl\{k \in {\mathbb N} :
d_{Y}\bigl(x_{k},y_{k}\bigr)\geq \sigma \bigr\}\Bigr)=1,
\\
& \overline{dens}\Bigl(\bigl\{k \in {\mathbb N} : d_{Y}\bigl(x_{k},y_{k}\bigr)
<\epsilon \bigr\}\Bigr)=1.
\end{split}
\end{align*}
The sequence $({\mathcal A}_{k})_{k\in
{\mathbb N}}$ is said to be densely\index{$\tilde{X}$-distributionally chaotic sequence of operators!densely}
$\tilde{X}$-distributionally chaotic iff $S$ can be chosen to be dense in $\tilde{X}.$
An MLO ${\mathcal A} : D({\mathcal A})\subseteq X \rightarrow X$ is said to be (densely)
$\tilde{X}$-distributionally chaotic\index{$\tilde{X}$-distributionally chaotic operator} iff the sequence $({\mathcal A}_{k}\equiv
{\mathcal A}^{k})_{k\in {\mathbb N}}$ is.
The set $S$ is said to be $\sigma_{\tilde{X}}$-scrambled set\index{ $\sigma_{\tilde{X}}$-scrambled set} ($\sigma$-scrambled set in the case\index{$\sigma$-scrambled set}
that $\tilde{X}=X$) of the sequence $({\mathcal A}_{k})_{k\in {\mathbb N}}$ (the
operator ${\mathcal A}$);  in the case that
$\tilde{X}=X,$ then we also say that the sequence $({\mathcal A}_{k})_{k\in
{\mathbb N}}$ (the operator ${\mathcal A}$) is distributionally chaotic.
\end{defn}

In the following definition appearing in \cite{marek-trio}, we have recently adapted the notion introduced in \cite[Definition 3.4]{mendoza} for multivalued linear operators.

\begin{defn}\label{DC-unbounded-fric-prim} 
Suppose that, for every $k\in {\mathbb N},$ ${\mathcal A}_{k} : D({\mathcal A}_{k})\subseteq X \rightarrow Y$ is an MLO, $\tilde{X}$ is a closed linear
subspace of $X,$ $x\in \bigcap_{k=1}^{\infty}D({\mathcal A}_{k})$ and
$m\in {\mathbb N}.$ Then we say that:
\begin{itemize}
\item[(i)] $x$ is distributionally
near to $0$ for $({\mathcal A}_{k})_{k\in {\mathbb N}}$
iff there exists $A\subseteq {\mathbb N}$ such that
$\overline{dens}(A)=1$ and for each $k\in A$ there exists $x_{k}\in {\mathcal A}_{k}x$ such that
$\lim_{k\in A,k\rightarrow \infty}x_{k}=0;$
\item[(ii)] $x$ is distributionally $m$-unbounded for $({\mathcal A}_{k})_{k\in {\mathbb N}}$ iff there exists $B\subseteq
{\mathbb N}$ such that $\overline{dens}(B)=1$ and for each
$k\in B$ there exists $x_{k}\in {\mathcal A}_{k}x$ such that
$\lim_{k\in
B,k\rightarrow \infty}p_{m}^{Y}(x_{k})=\infty ;$ $x$ is said to be distributionally unbounded for $({\mathcal A}_{k})_{k\in {\mathbb N}}$ iff there
exists $q\in {\mathbb N}$ such that $x$ is distributionally $q$-unbounded for $({\mathcal A}_{k})_{k\in {\mathbb N}}$
(if $Y$ is a Banach space, this means $\lim_{k\in
B,k\rightarrow \infty}\|x_{k}\|_{Y}=\infty );$
\item[(iii)] $x$ is a $\tilde{X}$-distributionally irregular vector\index{$\tilde{X}$-distributionally irregular vector} for
$({\mathcal A}_{k})_{k\in {\mathbb N}}$  (distributionally
irregular vector for $({\mathcal A}_{k})_{k\in {\mathbb N}}$, in
the case that $\tilde{X}=X$) iff $x\in
\bigcap_{k=1}^{\infty}D({\mathcal A}_{k}) \cap \tilde{X}$ and (i)-(ii) hold.
\end{itemize}
If ${\mathcal A} : D({\mathcal A})\subseteq X \rightarrow Y$ is an MLO and $x\in
D_{\infty}({\mathcal A}),$ then we say that
$x$ is distributionally
near to $0$ (distributionally $m$-unbounded, distributionally unbounded) for ${\mathcal A}$ iff
$x$ is distributionally
near to $0$ (distributionally $m$-unbounded, distributionally unbounded) for the sequence
$({\mathcal A}_{k}\equiv {\mathcal A}^{k})_{k\in {\mathbb N}};$
$x$ is said
to be a $\tilde{X}$-distributionally irregular vector for
${\mathcal A}$ (distributionally irregular vector for ${\mathcal A},$ in the case
that $\tilde{X}=X$) iff $x$ is $\tilde{X}$-distributionally
irregular vector for the sequence $({\mathcal A}_{k}\equiv {\mathcal A}^{k})_{k\in {\mathbb N}}$
(distributionally irregular vector\index{distributionally irregular vector} for
$({\mathcal A}_{k}\equiv {\mathcal A}^{k})_{k\in {\mathbb N}},$ in the case that $\tilde{X}=X$).
\end{defn}

\section{Disjoint distributionally chaotic properties of MLOs}\label{d-MLOs}

Let $\sigma>0,$ let $ \epsilon>0,$ and let $(x_{j,k})_{k\in {\mathbb N}}$ and $(y_{j,k})_{k\in {\mathbb N}}$ be sequences in $X$ ($1\leq j\leq N$). Consider the following conditions:
\begin{align}\label{d-jednacina}
\begin{split}
& \overline{dens}\Biggl( \bigcap_{j\in {\mathbb N}_{N}} \bigl\{k \in {\mathbb N} :
d_{Y}\bigl(x_{j,k},y_{j,k}\bigr)\geq \sigma \bigr\}\Biggr)=1,\mbox{ and }
\\ 
& \overline{dens}\Biggl( \bigcap_{j\in {\mathbb N}_{N}} \bigl\{k \in {\mathbb N} : d_{Y}\bigl(x_{j,k},y_{j,k}\bigr)
<\epsilon \bigr\}\Biggr)=1;
\end{split}
\end{align}
\begin{align}\label{d-jednacina2}
\begin{split}
& \overline{dens}\Biggl( \bigcap_{j\in {\mathbb N}_{N}} \bigl\{k \in {\mathbb N} :
d_{Y}\bigl(x_{j,k},y_{j,k}\bigr)\geq \sigma \bigr\}\Biggr)=1,\mbox{ and }
\\ 
& \bigl(\forall j\in {\mathbb N}_{N}\bigr)\, \ \overline{dens}\Bigl(  \bigl\{k \in {\mathbb N} : d_{Y}\bigl(x_{j,k},y_{j,k}\bigr)
<\epsilon \bigr\}\Bigr)=1;
\end{split}
\end{align}
\begin{align}\label{d-jednacina3}
\begin{split}
& \bigl(\forall j\in {\mathbb N}_{N}\bigr)\, \ \overline{dens}\Bigl( \bigl\{k \in {\mathbb N} :
d_{Y}\bigl(x_{j,k},y_{j,k}\bigr)\geq \sigma \bigr\}\Bigr)=1,\mbox{ and }
\\ 
& \bigl(\forall j\in {\mathbb N}_{N}\bigr)\, \ \overline{dens}\Bigl(  \bigl\{k \in {\mathbb N} : d_{Y}\bigl(x_{j,k},y_{j,k}\bigr)
<\epsilon \bigr\}\Bigr)=1;
\end{split}
\end{align}
\begin{align}\label{d-jednacina3'}
\begin{split}
& \bigl(\forall j\in {\mathbb N}_{N}\bigr)\, \ \overline{dens}\Bigl( \bigl\{k \in {\mathbb N} :
d_{Y}\bigl(x_{j,k},y_{j,k}\bigr)\geq \sigma \bigr\}\Bigr)=1,\mbox{ and }
\\ 
& \bigl(\exists j\in {\mathbb N}_{N}\bigr)\, \ \overline{dens}\Bigl(  \bigl\{k \in {\mathbb N} : d_{Y}\bigl(x_{j,k},y_{j,k}\bigr)
<\epsilon \bigr\}\Bigr)=1;
\end{split}
\end{align}
\begin{align}\label{d-jednacina3''}
\begin{split}
& \bigl(\exists j\in {\mathbb N}_{N}\bigr)\, \ \overline{dens}\Bigl( \bigl\{k \in {\mathbb N} :
d_{Y}\bigl(x_{j,k},y_{j,k}\bigr)\geq \sigma \bigr\}\Bigr)=1,\mbox{ and }
\\ 
& \bigl(\forall j\in {\mathbb N}_{N}\bigr)\, \ \overline{dens}\Bigl(  \bigl\{k \in {\mathbb N} : d_{Y}\bigl(x_{j,k},y_{j,k}\bigr)
<\epsilon \bigr\}\Bigr)=1;
\end{split}
\end{align}
\begin{align}\label{d-jednacina4}
\begin{split}
& \overline{dens}\Biggl( \bigcap_{j\in {\mathbb N}_{N}} \bigl\{k \in {\mathbb N} :
d_{Y}\bigl(x_{j,k},y_{j,k}\bigr)\geq \sigma \bigr\}\Biggr)=1,\mbox{ and }
\\ 
& \bigl(\exists j\in {\mathbb N}_{N}\bigr)\, \ \overline{dens}\Bigl(  \bigl\{k \in {\mathbb N} : d_{Y}\bigl(x_{j,k},y_{j,k}\bigr)
<\epsilon \bigr\}\Bigr)=1;
\end{split}
\end{align}
\begin{align}\label{d-jednacina7}
\begin{split}
& \bigl(\forall j\in {\mathbb N}_{N}\bigr)\, \ \overline{dens}\Bigl(  \bigl\{k \in {\mathbb N} :
d_{Y}\bigl(x_{j,k},y_{j,k}\bigr)\geq \sigma \bigr\}\Bigr)=1,\mbox{ and }
\\ 
& \overline{dens}\Biggl( \bigcap_{j\in {\mathbb N}_{N}} \bigl\{k \in {\mathbb N} : d_{Y}\bigl(x_{j,k},y_{j,k}\bigr)
<\epsilon \bigr\}\Biggr)=1;
\end{split}
\end{align}
\begin{align}\label{d-jednacina8}
\begin{split}
& \bigl(\exists j\in {\mathbb N}_{N}\bigr)\, \ \overline{dens}\Bigl(  \bigl\{k \in {\mathbb N} :
d_{Y}\bigl(x_{j,k},y_{j,k}\bigr)\geq \sigma \bigr\}\Bigr)=1,\mbox{ and }
\\ 
& \overline{dens}\Biggl( \bigcap_{j\in {\mathbb N}_{N}} \bigl\{k \in {\mathbb N} : d_{Y}\bigl(x_{j,k},y_{j,k}\bigr)
<\epsilon \bigr\}\Biggr)=1;
\end{split}
\end{align}
\begin{align}\label{d-jednacina9}
\begin{split}
& \overline{dens}\Biggl( \bigcup_{j\in {\mathbb N}_{N}}  \bigl\{k \in {\mathbb N} :
d_{Y}\bigl(x_{j,k},y_{j,k}\bigr)\geq \sigma \bigr\}\Biggr)=1,\mbox{ and }
\\ 
& \overline{dens}\Biggl( \bigcap_{j\in {\mathbb N}_{N}} \bigl\{k \in {\mathbb N} : d_{Y}\bigl(x_{j,k},y_{j,k}\bigr)
<\epsilon \bigr\}\Biggr)=1;
\end{split}
\end{align}
\begin{align}\label{d-jednacina10}
\begin{split}
& \overline{dens}\Biggl( \bigcup_{j\in {\mathbb N}_{N}}  \bigl\{k \in {\mathbb N} :
d_{Y}\bigl(x_{j,k},y_{j,k}\bigr)\geq \sigma \bigr\}\Biggr)=1,\mbox{ and }
\\ 
& \bigl(\forall j\in {\mathbb N}_{N}\bigr)\, \ \overline{dens}\Bigl(  \bigl\{k \in {\mathbb N} : d_{Y}\bigl(x_{j,k},y_{j,k}\bigr)
<\epsilon \bigr\}\Bigr)=1;
\end{split}
\end{align}
\begin{align}\label{d-jednacina11}
\begin{split}
&\overline{dens}\Biggl( \bigcap_{j\in {\mathbb N}_{N}} \bigl\{k \in {\mathbb N} :
d_{Y}\bigl(x_{j,k},y_{j,k}\bigr)\geq \sigma \bigr\}\Biggr)=1,\mbox{ and }
\\ 
& \overline{dens}\Biggl( \bigcup_{j\in {\mathbb N}_{N}} \bigl\{k \in {\mathbb N} : d_{Y}\bigl(x_{j,k},y_{j,k}\bigr)
<\epsilon \bigr\}\Biggr)=1;
\end{split}
\end{align}
\begin{align}\label{d-jednacina12}
\begin{split}
& \bigl(\forall j\in {\mathbb N}_{N}\bigr)\, \ \overline{dens}\Bigl(  \bigl\{k \in {\mathbb N} :
d_{Y}\bigl(x_{j,k},y_{j,k}\bigr)\geq \sigma \bigr\}\Bigr)=1,\mbox{ and }
\\ 
& \overline{dens}\Biggl( \bigcup_{j\in {\mathbb N}_{N}} \bigl\{k \in {\mathbb N} : d_{Y}\bigl(x_{j,k},y_{j,k}\bigr)
<\epsilon \bigr\}\Biggr)=1.
\end{split}
\end{align}

Now we are ready to introduce the following notion of disjoint distributional chaos for MLOs in Fr\'echet spaces:

\begin{defn}\label{DC-unbounded-fric-DISJOINT}
Let $i\in {\mathbb N}_{12}.$
Suppose that, for every $j\in {\mathbb N}_{N}$ and $k\in {\mathbb N},$ ${\mathcal A}_{j,k} : D({\mathcal A}_{j,k})\subseteq X \rightarrow Y$ is an MLO and $\tilde{X}$ is a closed linear
subspace of $X.$
Then we say that the sequence $(({\mathcal A}_{j,k})_{k\in
{\mathbb N}})_{1\leq j\leq N}$ is disjoint
$(\tilde{X},i)$-distributionally chaotic, $(d,\tilde{X},i)$-distributionally chaotic in short, iff there exist an uncountable
set $S\subseteq \bigcap_{j=1}^{N} \bigcap_{k=1}^{\infty} D({\mathcal A}_{j,k}) \cap \tilde{X}$ and
$\sigma>0$ such that for each $\epsilon>0$ and for each pair $x,\
y\in S$ of distinct points we have that for each $j\in {\mathbb N}_{N}$ and $k\in {\mathbb N}$ there exist elements $x_{j,k}\in {\mathcal A}_{j,k}x$ and $y_{j,k}\in {\mathcal A}_{j,k}y$ such that
(3.i) holds.

The sequence $(({\mathcal A}_{j,k})_{k\in
{\mathbb N}})_{1\leq j\leq N}$ is said to be densely
$(d,\tilde{X},i)$-distributionally chaotic iff $S$ can be chosen to be dense in $\tilde{X}.$
A finite sequence $({\mathcal A}_{j})_{1\leq j\leq N}$ of MLOs on $X$ is said to be (densely)
$(\tilde{X},i)$-distributionally chaotic\index{$\tilde{X}$-distributionally chaotic operator} iff the sequence $(({\mathcal A}_{j,k}\equiv
{\mathcal A}_{j}^{k})_{k\in {\mathbb N}})_{1\leq j\leq N}$ is.
The set $S$ is said to be $(d,\sigma_{\tilde{X}},i)$-scrambled set\index{ $\sigma_{\tilde{X}}$-scrambled set} ($(d,\sigma,i)$-scrambled set in the case\index{$\sigma$-scrambled set}
that $\tilde{X}=X$) of $(({\mathcal A}_{j,k})_{k\in
{\mathbb N}})_{1\leq j\leq N}$ ($({\mathcal A}_{j})_{1\leq j\leq N}$);  in the case that
$\tilde{X}=X,$ then we also say that the sequence $(({\mathcal A}_{j,k})_{k\in
{\mathbb N}})_{1\leq j\leq N}$ ($({\mathcal A}_{j})_{1\leq j\leq N}$) is disjoint $i$-distributionally chaotic, $(d,i)$-distributionally chaotic in short.
\end{defn}

Concerning Definition \ref{DC-unbounded-fric-DISJOINT}, it should be noted that the use of any strongly equivalent metric $d_{Y}'(\cdot,\cdot)$ with $d_{Y}(\cdot,\cdot)$ in (3.i) leads to the same notion of distributional chaos. The use of manifold $\tilde{X}\neq X$ in linear topological dynamics comes from the paper \cite{banasiak} by 
J. Banasiak and M. Moszy\'nski; as already marked in \cite{mendoza}, we need to know the minimal linear subspace $\tilde{X}$ of $X,$ in a certain sense, for which the sequence $(({\mathcal A}_{j,k})_{k\in
{\mathbb N}})_{1\leq j\leq N}$ is 
$(d,\tilde{X},i)$-distributionally chaotic because, in this case, $(({\mathcal A}_{j,k})_{k\in
{\mathbb N}})_{1\leq j\leq N}$ is 
$(d,\tilde{X}',i)$-distributionally chaotic for any closed linear subspace $\tilde{X}'$ of $X$ containing $\tilde{X}.$

\begin{example}\label{totan}
It is expected that the multivalued linear operators $X\times X,\cdot \cdot \cdot,\, X\times X,$ totally counted $N$ times, are densely $(d,1)$-distributionally chaotic. To see this, take any 
two disjoint subsets $A$ and $B$ of ${\mathbb N}$ such that ${\mathbb N}=A\cup B$ and $\overline{dens}(A)=\overline{dens}(B)=1.$ If $k\in A$ and $j\in {\mathbb N}_{N},$ choose any $z_{j,k}:=x_{j,k}-y_{j,k}$ such that $d_{Y}(z_{j,k},0)\geq 1;$  if $k\in B$ and $j\in {\mathbb N}_{N},$ choose simply $x_{j,k}=y_{j,k}.$ Then it is easy to see that (3.1) holds with $S=X\times X$ and $\sigma=1.$ 
\end{example}

In particular, the previous example shows that dense (full, moreover) $(d,1)$-distributional chaos occurs in finite-dimensional spaces for the sequences of MLOs. And, more to the point, the full $(d,1)$-distributional chaos occurs in finite-dimensional spaces even for the sequences of linear continuous operators, as the following example shows (this fact has not been observed in \cite{2013JFA} and \cite{mendoza}):

\begin{example}\label{totanr}
Let $X:={\mathbb K}^{n},$ and let $A$ and $B$ be 
two disjoint subsets of ${\mathbb N}$ such that ${\mathbb N}=A\cup B$ and $\overline{dens}(A)=\overline{dens}(B)=1.$ If $k\in A$ and $j\in {\mathbb N}_{N},$ we choose $T_{j,k}$ to be the diagonal matrix $diag(a_{11}^{jk},\cdot \cdot \cdot,a_{nn}^{jk})$ such that $|a_{ii}^{jk}|\geq j+k$ for all $i\in {\mathbb N}_{n}.$ If $k\in B$ and $j\in {\mathbb N}_{N},$ we define $T_{j,k}:=0.$ Then it can be simply verified that (3.1) holds with $S=X$ and $\sigma>0$ arbitrarily chosen. 
\end{example}

On the other hand, for each integer $i\in {\mathbb N}_{8}$ we have that the $(d,\tilde{X},i)$-distributional chaos of operators $T_{1}\in L(X),\cdot \cdot \cdot, T_{N}\in L(X)$ implies that there exists an index $j\in {\mathbb N}_{N}$ such that
$T_{j}$ is $(\tilde{X},i)$-distributionally chaotic and therefore both Li-Yorke chaotic and distributionally chaotic (see also Remark \ref{prc-qwe} below and \cite[Definition 1, Theorem 5]{2011}). If this is the case, $T_{j}$ cannot be a compact operator due to \cite[Corollary 6]{2011} and, because of that, $X$ needs to be infinite-dimensional in this case. The same holds if $i\in \{9,10,11,12\}$
because then the $(d,\tilde{X},i)$-distributional chaos of operators $T_{1}\in L(X),\cdot \cdot \cdot, T_{N}\in L(X)$ implies that there exists an index $j\in {\mathbb N}_{N}$ such that
$T_{j}$ is Li-Yorke chaotic.

The following important proposition can be trivially deduced (the parts [10. and 12.] will be specified a little bit later, in Example \ref{10} and Example \ref{12}):

\begin{prop}\label{trick}
For any sequence ${\mathbb A}\equiv (({\mathcal A}_{j,k})_{k\in
{\mathbb N}})_{1\leq j\leq N}$ of \emph{MLOs}, the following holds:
\begin{itemize}
\item[1.] $(d,\tilde{X},1)$-distributional chaos of ${\mathbb A}$ implies $(d,\tilde{X},i)$-distributional chaos of ${\mathbb A}$ for all $i\in {\mathbb N}_{12};$
\item[2.] $(d,\tilde{X},2)$-distributional chaos implies $(d,\tilde{X},i)$-distributional chaos for all $i\in\{3,4,5,6,10,11,12\};$
\item[3.] $(d,\tilde{X},3)$-distributional chaos of ${\mathbb A}$ implies $(d,\tilde{X},i)$-distributional chaos of ${\mathbb A}$ for all $i\in\{4,5,10,12\};$
\item[4.] $(d,\tilde{X},4)$-distributional chaos of ${\mathbb A}$ implies $(d,\tilde{X},12)$-distributional chaos of ${\mathbb A}$;
\item[5.] $(d,\tilde{X},5)$-distributional chaos of ${\mathbb A}$ implies $(d,\tilde{X},10)$-distributional chaos of ${\mathbb A}$;
\item[6.] $(d,\tilde{X},6)$-distributional chaos of ${\mathbb A}$ implies $(d,\tilde{X},i)$-distributional chaos of ${\mathbb A}$ for all $i\in\{4,11,12\};$
\item[7.] $(d,\tilde{X},7)$-distributional chaos of ${\mathbb A}$ implies $(d,\tilde{X},i)$-distributional chaos of ${\mathbb A}$ for all $i\in\{3,4,5,8,9,10,12\};$
\item[8.] $(d,\tilde{X},8)$-distributional chaos of ${\mathbb A}$ implies $(d,\tilde{X},i)$-distributional chaos of ${\mathbb A}$ for all $i\in\{5,9,10\};$
\item[9.] $(d,\tilde{X},9)$-distributional chaos of ${\mathbb A}$ implies $(d,\tilde{X},10)$-distributional chaos of ${\mathbb A}$;
\item[10.] $(d,\tilde{X},10)$-distributional chaos of ${\mathbb A}$ does not imply anything, in general;
\item[11.] $(d,\tilde{X},11)$-distributional chaos of ${\mathbb A}$ implies $(d,\tilde{X},12)$-distributional chaos of ${\mathbb A}$;
\item[12.] $(d,\tilde{X},12)$-distributional chaos of ${\mathbb A}$ does not imply anything, in general.
\end{itemize}
\end{prop}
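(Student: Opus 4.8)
The plan is to reduce the whole proposition to a single monotonicity principle for the upper density, keeping in each implication the \emph{same} scrambled set $S$, the \emph{same} constant $\sigma>0$, and the \emph{same} selections $x_{j,k}\in\mathcal{A}_{j,k}x$, $y_{j,k}\in\mathcal{A}_{j,k}y$, while only the combinatorial density requirement is relaxed. First I would fix a pair of distinct points $x,y\in S$ together with $\sigma,\epsilon>0$ and abbreviate, for $j\in\mathbb{N}_{N}$,
$$
U_{j}:=\bigl\{k\in\mathbb{N}:d_{Y}(x_{j,k},y_{j,k})\geq\sigma\bigr\},\qquad V_{j}:=\bigl\{k\in\mathbb{N}:d_{Y}(x_{j,k},y_{j,k})<\epsilon\bigr\}.
$$
With this notation each of the conditions \eqref{d-jednacina}--\eqref{d-jednacina12} is the conjunction of a requirement on the family $(U_{j})_{j}$ and a requirement on the family $(V_{j})_{j}$, where each requirement is one of the four forms $\overline{dens}(\bigcap_{j}W_{j})=1$, $(\forall j)\,\overline{dens}(W_{j})=1$, $(\exists j)\,\overline{dens}(W_{j})=1$, or $\overline{dens}(\bigcup_{j}W_{j})=1$.

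The only analytic input needed is the elementary fact that $\overline{dens}(\cdot)$ is monotone: $A\subseteq B\subseteq\mathbb{N}$ forces $\overline{dens}(A)\leq\overline{dens}(B)$. Combined with the chain of inclusions $\bigcap_{j}W_{j}\subseteq W_{j_{0}}\subseteq\bigcup_{j}W_{j}$, valid for every fixed $j_{0}\in\mathbb{N}_{N}$, this yields the one-directional hierarchy
$$
\overline{dens}\Bigl(\bigcap_{j}W_{j}\Bigr)=1\ \Longrightarrow\ (\forall j)\,\overline{dens}(W_{j})=1\ \Longrightarrow\ (\exists j)\,\overline{dens}(W_{j})=1\ \Longrightarrow\ \overline{dens}\Bigl(\bigcup_{j}W_{j}\Bigr)=1,
$$
that is, a total order ``$\bigcap\succ\forall\succ\exists\succ\bigcup$'' on the four density modes. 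Every positive assertion (items 1--9 and 11) then follows mechanically: writing each condition (3.$i$) as a pair $\bigl(\text{mode for }(U_{j}),\ \text{mode for }(V_{j})\bigr)$, one checks that (3.$i$) implies (3.$i'$) exactly when the $U$-mode of $i$ dominates the $U$-mode of $i'$ and the $V$-mode of $i$ dominates the $V$-mode of $i'$ in the above order; reading off the dominating pairs reproduces each displayed list. For instance, item~1 records that (3.1)$=(\bigcap,\bigcap)$ sits at the top in both coordinates, while item~7 records that (3.7)$=(\forall,\bigcap)$ dominates precisely the indices $\{3,4,5,8,9,10,12\}$.

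The remaining items 10 and 12 are negative statements, asserting that (3.10)$=(\bigcup,\forall)$ and (3.12)$=(\forall,\bigcup)$ are minimal among the twelve conditions and hence imply none of the others; these cannot come from the hierarchy and instead require explicit separating examples, which is why they are deferred to Example \ref{10} and Example \ref{12}. The point I would stress is that the hierarchy is genuinely one-directional: the converse step ``$(\forall j)\,\overline{dens}(W_{j})=1\Rightarrow\overline{dens}(\bigcap_{j}W_{j})=1$'' is false in general, since upper density is not superadditive on intersections (already for $N=2$ one may take $W_{1}$ and $W_{2}$ to be the even and the odd integers). This failure is precisely the source of all non-implications and is exactly what must be exploited when building the counterexamples for items 10 and 12; that construction, rather than any step in the present argument, is the only real obstacle, and it is postponed to the cited examples.
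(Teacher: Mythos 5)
Your proposal is correct and is essentially the paper's (unwritten) argument: the paper merely asserts the positive items are "trivially deduced," and your reduction to the monotonicity of $\overline{dens}$ along $\bigcap_{j}W_{j}\subseteq W_{j_{0}}\subseteq\bigcup_{j}W_{j}$, applied coordinatewise to the $(U_{j})$- and $(V_{j})$-modes with the same $S$, $\sigma$ and selections, is exactly that deduction, correctly reproducing every list in items 1--9 and 11 and correctly deferring items 10 and 12 to Example \ref{10} and Example \ref{12}. One small slip in your closing aside: the even and odd integers each have upper density $\tfrac12$, so they do not witness the failure of ``$(\forall j)\,\overline{dens}(W_{j})=1\Rightarrow\overline{dens}(\bigcap_{j}W_{j})=1$''; you need two \emph{disjoint} sets each of upper density $1$ (long alternating blocks, as in Example \ref{totan}), though this does not affect the proof itself since the counterexamples are deferred anyway.
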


In view of this, $(d,\tilde{X},1)$-distributional chaos is unquestionably the most important, because it implies all others. But, in our analyses, 
the notion of $(d,\tilde{X},9)$-distributional chaos is incredibly important, as well (see also \cite[Definition 2.2, Remark 2.9]{bp07}):

\begin{prop}\label{tuple-profo}
Suppose that, for every $j\in {\mathbb N}_{N}$ and $k\in {\mathbb N},$ ${\mathcal A}_{j,k} : D({\mathcal A}_{k}) \subseteq X \rightarrow Y$ is an \emph{MLO} and $\tilde{X}$ is a closed linear
subspace of $X.$ 
Define, for every $k\in {\mathbb N},$ the \emph{MLO} ${\mathbb A}_{k} : D({\mathbb A}_{k}) \subseteq X \rightarrow Y^{N}$
by
$D({\mathbb A}_{k}):=\bigcap_{1\leq j\leq N}D({\mathcal A}_{j,k})$
and ${\mathbb A}_{k}x:=\{(x_{1,k},\cdot \cdot \cdot,x_{N,k}) : x_{j,k} \in {\mathcal A}_{j,k}x\mbox{ for all }j\in {\mathbb N}_{N}\}.$
Then the sequence $(({\mathcal A}_{j,k})_{k\in
{\mathbb N}})_{1\leq j\leq N}$ is
disjoint
$(\tilde{X},9)$-distributionally chaotic iff the sequence $({\mathbb A}_{k})_{k\in
{\mathbb N}}$ is
$\tilde{X}$-distributionally chaotic.
\end{prop}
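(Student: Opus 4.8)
The plan is to show that the two notions coincide by directly matching the witnessing data, the only genuine point being the choice of metric on $Y^{N}$. First I would record the domain identity $\bigcap_{k=1}^{\infty}D({\mathbb A}_{k})=\bigcap_{k=1}^{\infty}\bigcap_{j=1}^{N}D({\mathcal A}_{j,k})=\bigcap_{j=1}^{N}\bigcap_{k=1}^{\infty}D({\mathcal A}_{j,k})$, which follows at once from the definition $D({\mathbb A}_{k})=\bigcap_{1\leq j\leq N}D({\mathcal A}_{j,k})$. Consequently the admissible scrambled sets for $({\mathbb A}_{k})_{k\in{\mathbb N}}$ and for $(({\mathcal A}_{j,k})_{k\in{\mathbb N}})_{1\leq j\leq N}$ both range over subsets of $\bigcap_{j=1}^{N}\bigcap_{k=1}^{\infty}D({\mathcal A}_{j,k})\cap\tilde{X}$, so in proving the equivalence I may use the same candidate set $S$ and the same constant $\sigma>0$ on both sides.

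The key step is the level-set computation. Since the translation invariant metric ${\rm d}_{Y^{N}}$ inducing the topology of $Y^{N}$ is strongly equivalent, by \eqref{metri-sa}, to $d_{Y^{N}}(\vec{x},\vec{y})=\max_{1\leq j\leq N}d_{Y}(x_{j},y_{j})$, the remark following Definition \ref{DC-unbounded-fric-DISJOINT} lets me compute the $\tilde{X}$-distributional chaos of $({\mathbb A}_{k})_{k\in{\mathbb N}}$ with respect to $d_{Y^{N}}$. By definition, an element of ${\mathbb A}_{k}x$ is exactly a tuple $X_{k}=(x_{1,k},\cdot\cdot\cdot,x_{N,k})$ with $x_{j,k}\in{\mathcal A}_{j,k}x$ for every $j\in{\mathbb N}_{N}$, and likewise $Y_{k}=(y_{1,k},\cdot\cdot\cdot,y_{N,k})\in{\mathbb A}_{k}y$ corresponds to a choice $y_{j,k}\in{\mathcal A}_{j,k}y$. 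For such tuples the max-metric yields, for every $\sigma>0$ and $\epsilon>0$, the exact set identities
\begin{align*}
\bigl\{k\in{\mathbb N}:d_{Y^{N}}(X_{k},Y_{k})\geq\sigma\bigr\}&=\bigcup_{j\in{\mathbb N}_{N}}\bigl\{k\in{\mathbb N}:d_{Y}(x_{j,k},y_{j,k})\geq\sigma\bigr\},\\
\bigl\{k\in{\mathbb N}:d_{Y^{N}}(X_{k},Y_{k})<\epsilon\bigr\}&=\bigcap_{j\in{\mathbb N}_{N}}\bigl\{k\in{\mathbb N}:d_{Y}(x_{j,k},y_{j,k})<\epsilon\bigr\},
\end{align*}
because $\max_{j}d_{Y}(x_{j,k},y_{j,k})\geq\sigma$ holds iff some coordinate does, while $\max_{j}d_{Y}(x_{j,k},y_{j,k})<\epsilon$ holds iff every coordinate does.

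With these identities the equivalence is a matter of reading off the two upper-density conditions. For the forward implication, starting from witnesses $x_{j,k}\in{\mathcal A}_{j,k}x$, $y_{j,k}\in{\mathcal A}_{j,k}y$ realizing \eqref{d-jednacina9}, I assemble $X_{k}:=(x_{1,k},\cdot\cdot\cdot,x_{N,k})\in{\mathbb A}_{k}x$ and $Y_{k}:=(y_{1,k},\cdot\cdot\cdot,y_{N,k})\in{\mathbb A}_{k}y$; the two identities turn the first (union) and second (intersection) lines of \eqref{d-jednacina9} into precisely the two density equalities of Definition \ref{DC-unbounded-fric} for $({\mathbb A}_{k})_{k\in{\mathbb N}}$. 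Conversely, starting from witnesses $X_{k}\in{\mathbb A}_{k}x$ and $Y_{k}\in{\mathbb A}_{k}y$ for the $\tilde{X}$-distributional chaos of $({\mathbb A}_{k})_{k\in{\mathbb N}}$, their coordinates $x_{j,k},y_{j,k}$ automatically satisfy $x_{j,k}\in{\mathcal A}_{j,k}x$ and $y_{j,k}\in{\mathcal A}_{j,k}y$, and the same identities recover \eqref{d-jednacina9}. As $S$, $\sigma$ are common, the pair $x,y\in S$ of distinct points is arbitrary, and $\epsilon>0$ is arbitrary, this establishes both implications.

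I do not expect any serious obstacle here; the argument is a definitional unwinding. The one point that must not be glossed over is that the natural metric ${\rm d}_{Y^{N}}$ on $Y^{N}$ is the sum-type metric, not the maximum metric, so the clean union/intersection level-set identities above fail for ${\rm d}_{Y^{N}}$ itself; it is essential to first pass, via the strong equivalence \eqref{metri-sa} together with the metric-independence remark after Definition \ref{DC-unbounded-fric-DISJOINT}, to the equivalent max-metric $d_{Y^{N}}$, for which the correspondence between the ``$\geq\sigma$'' condition (a union over $j$) and the ``$<\epsilon$'' condition (an intersection over $j$) in \eqref{d-jednacina9} becomes exact.
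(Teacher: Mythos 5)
Your proof is correct and follows exactly the route the paper intends: the paper's own proof is a one-line remark that the claim "almost trivially follows by elementary definitions and the properties of the metric $d_{Y^{N}}(\cdot,\cdot)$," and your write-up simply makes explicit the two steps it relies on, namely the passage via \eqref{metri-sa} to the strongly equivalent max-metric and the union/intersection level-set identities that match \eqref{d-jednacina9}. Nothing further is needed.
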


\begin{proof}
The proof of proposition almost trivially follows by elementary definitions and the properties of metric $d_{Y^{N}}(\cdot,\cdot).$
\end{proof}

Based on our considerations, we can introduce and study a great number of other types of disjoint distributional chaos for multivalued linear operators. For example,
we can introduce the notion in which the tuple $(({\mathcal A}_{j,k,r})_{k\in
{\mathbb N}})_{1\leq j\leq N}$
is $(d,\tilde{X},i)$-distributionally chaotic, where $i\in {\mathbb N}_{12},$ $r\in {\mathbb N}$ is given in advance and ${\mathcal A}_{j,k,r}$ denotes the direct sum of $r$ operators ${\mathcal A}_{j,k}$ ($k\in {\mathbb N},$ $1\leq j\leq N$); see \cite{kerry-drew} for the notion and some results in this direction. Because of space and time limitations, we will not follow this approach here.

In our framework, we use the set operations $\cap,$ $\cup$ as well as the quantifiers $\forall$ and $\exists.$ Depending on their choice, we recognize four different types of disjoint distributionally unbounded vectors and 
four different types of disjoint distributionally near to $0$ vectors (in multivalued linear setting, the zero vector can be also disjoint distributionally near to $0$ or disjoint distributionally unbounded but we will not consider this option for the sake of brevity):

\begin{defn}\label{DC-zero-DISJOINT}
Suppose that, for every $j\in {\mathbb N}_{N}$ and $k\in {\mathbb N},$ ${\mathcal A}_{j,k} : D({\mathcal A}_{j,k})\subseteq X \rightarrow Y$ is an MLO and $x\in \bigcap_{j=1}^{N}\bigcap_{k=1}^{\infty}D({\mathcal A}_{j,k}),$ $x\neq 0.$ Then we say that:
\begin{itemize}
\item[(i)] $x$ is $d$-distributionally
near to $0$ of type $1$ for $(({\mathcal A}_{j,k})_{k\in {\mathbb N}})_{1\leq j\leq N}$
iff there exists $A\subseteq {\mathbb N}$ such that
$\overline{dens}(A)=1$ as well as for each $j\in {\mathbb N}_{N}$ and $k\in {\mathbb N}$ there exists $x_{j,k}\in {\mathcal A}_{j,k}x$ such that
$\lim_{k\in A,k\rightarrow \infty}x_{j,k}=0,$ $j\in {\mathbb N}_{N}$ (the use of $\cap$ in (3.i));
\item[(ii)] $x$ is $d$-distributionally
near to $0$ of type $2$ for $(({\mathcal A}_{j,k})_{k\in {\mathbb N}})_{1\leq j\leq N}$
iff for each $\epsilon>0$, $j\in {\mathbb N}_{N}$ and $k\in {\mathbb N}$ there exists $x_{j,k}\in {\mathcal A}_{j,k}x$ such that the set 
$\bigcup_{j\in {\mathbb N}_{N}} \{k \in {\mathbb N} :
d_{Y}(x_{j,k},y_{j,k})< \epsilon \}$ has the upper density $1$ (the use of $\cup$ in (3.i));
\item[(iii)] $x$ is $d$-distributionally
near to $0$ of type $3$ for $(({\mathcal A}_{j,k})_{k\in {\mathbb N}})_{1\leq j\leq N}$
iff for every $j\in {\mathbb N}_{N}$ there exists a set $A_{j}\subseteq {\mathbb N}$ such that
$\overline{dens}(A_{j})=1$ as well as for each $k\in A_{j}$ there exists $x_{j,k}\in {\mathcal A}_{j,k}x$ such that
$\lim_{k\in A_{j},k\rightarrow \infty}x_{j,k}=0$
(the use of $\forall$ in (3.i));
\item[(iv)] $x$ is $d$-distributionally
near to $0$ of type $4$ for $(({\mathcal A}_{j,k})_{k\in {\mathbb N}})_{1\leq j\leq N}$
iff there exist an integer $j\in {\mathbb N}_{N}$ and a set $A_{j}\subseteq {\mathbb N}$ such that
$\overline{dens}(A_{j})=1$ as well as for each $k\in A_{j}$ there exists $x_{j,k}\in {\mathcal A}_{j,k}x$ such that
$\lim_{k\in A_{j},k\rightarrow \infty}x_{j,k}=0$
(the use of $\exists$ in (3.i)).
\end{itemize}
\end{defn}

\begin{defn}\label{DC-unbounded-DISJOINT}
Suppose that, for every $j\in {\mathbb N}_{N}$ and $k\in {\mathbb N},$ ${\mathcal A}_{j,k} : D({\mathcal A}_{j,k})\subseteq X \rightarrow Y$ is an MLO, $x\in \bigcap_{j=1}^{N}\bigcap_{k=1}^{\infty}D({\mathcal A}_{j,k}),$ $x\neq 0,$ $i\in {\mathbb N}_{4}$ and
$m\in {\mathbb N}.$ Then we say that:
\begin{itemize}
\item[(i)] $x$ is $d$-distributionally $m$-unbounded of type $1$ for $(({\mathcal A}_{j,k})_{k\in {\mathbb N}})_{1\leq j\leq N}$  iff there exists $B\subseteq
{\mathbb N}$ such that $\overline{dens}(B)=1$ as well as for each $j\in {\mathbb N}_{N}$ and
$k\in B$ there exists $x_{j,k}'\in {\mathcal A}_{j,k}x$ such that
$\lim_{k\in
B,k\rightarrow \infty}p_{m}^{Y}(x_{j,k}')=\infty ,$ $j\in {\mathbb N}_{N}$ (the use of $\cap$ in (3.i));
\item[(ii)] $x$ is $d$-distributionally $m$-unbounded of type $2$ for $(({\mathcal A}_{j,k})_{k\in {\mathbb N}})_{1\leq j\leq N}$  iff there exists $B\subseteq
{\mathbb N}$ such that $\overline{dens}(B)=1$ as well as for each $j\in {\mathbb N}_{N}$ and
$k\in B$ there exists $x_{j,k}'\in {\mathcal A}_{j,k}x$ such that
$\lim_{k\in
B,k\rightarrow \infty}\sum_{j\in {\mathbb N}_{N}}p_{m}^{Y}(x_{j,k}')=\infty $ (the use of $\cup$ in (3.i));
\item[(iii)] $x$ is $d$-distributionally $m$-unbounded of type $3$ for $(({\mathcal A}_{j,k})_{k\in {\mathbb N}})_{1\leq j\leq N}$  iff for every $j\in {\mathbb N}_{N}$ there exists $B_{j}\subseteq
{\mathbb N}$ such that $\overline{dens}(B_{j})=1$ as well as for each
$k\in B_{j}$ there exists $x_{j,k}'\in {\mathcal A}_{j,k}x$ such that
$\lim_{k\in
B_{j},k\rightarrow \infty}p_{m}^{Y}(x_{j,k}')=\infty $ (the use of $\forall$ in (3.i));
\item[(iv)] $x$ is $d$-distributionally $m$-unbounded of type $4$ for $(({\mathcal A}_{j,k})_{k\in {\mathbb N}})_{1\leq j\leq N}$  iff there exist an integer $j\in {\mathbb N}_{N} $ and a set $B_{j}\subseteq
{\mathbb N}$ such that $\overline{dens}(B_{j})=1$ as well as for each
$k\in B_{j}$ there exists $x_{j,k}'\in {\mathcal A}_{j,k}x$ such that
$\lim_{k\in
B_{j},k\rightarrow \infty}p_{m}^{Y}(x_{j,k}')=\infty $ (the use of $\exists$ in (3.i)).
\end{itemize}
It is said that $x$ is $d$-distributionally unbounded of type $i$ for $(({\mathcal A}_{j,k})_{k\in {\mathbb N}})_{1\leq j\leq N}$ iff there
exists $q\in {\mathbb N}$ such that $x$ is $d$-distributionally $q$-unbounded of type $i$ for $(({\mathcal A}_{j,k})_{k\in {\mathbb N}})_{1\leq j\leq N}.$
\end{defn}

It is clear that a vector $x$ is $d$-distributionally
near to $0$ of type $3,$ resp. $4,$ for $(({\mathcal A}_{j,k})_{k\in {\mathbb N}})_{1\leq j\leq N}$ iff for each $j\in {\mathbb N}_{N},$ resp. there exists $j\in {\mathbb N}_{N},$ such that $x$ is distributionally
near to $0$ for $({\mathcal A}_{j,k})_{k\in {\mathbb N}};$ a similar statement holds for $d$-distributional $m$-unboundedness of type $3$, resp. $4.$ 
Using this and the fact that the assertions of \cite[Proposition 7, Proposition 9]{2013JFA} hold for general sequences of linear continuous operators (cf. also \cite[Theorem 3.7]{mendoza}), we can immediately clarify several results concerning the existence of 
$d$-distributionally
near to $0$ and $d$-distributionally $m$-unbounded vectors of type $3,$ resp. $4.$

For every type od $\tilde{X}$-disjoint distributional chaos, we can introduce the notion of corresponding $(d,\tilde{X})$-distributionally irregular vectors, as it has been done for the usually examined disjoint hypercyclicity (\cite{bp07}). The things are pretty clear 
and definition goes as follows:

\begin{defn}\label{iregularni-prc}
Suppose that, for every $j\in {\mathbb N}_{N}$ and $k\in {\mathbb N},$ ${\mathcal A}_{j,k} : D({\mathcal A}_{j,k})\subseteq X \rightarrow Y$ is an MLO and $x\in \tilde{X} \cap \bigcap_{j=1}^{N}\bigcap_{k=1}^{\infty}D({\mathcal A}_{j,k}),$ $x\neq 0.$ 
Then we say that:
\begin{itemize}
\item[(i)] $x$ is a $(d,\tilde{X},1)$-distributionally irregular vector\index{$\tilde{X}$-distributionally irregular vector} for
$(({\mathcal A}_{j,k})_{k\in {\mathbb N}})_{1\leq j\leq N}$ iff $x$ is $d$-distributionally
near to $0$ of type $1$ for $(({\mathcal A}_{j,k})_{k\in {\mathbb N}})_{1\leq j\leq N},$ $x$ is $d$-distributionally unbounded of type $1$ for $(({\mathcal A}_{j,k})_{k\in {\mathbb N}})_{1\leq j\leq N}$ and
the requirements of the last condition holds with $x_{j,k}'=x_{j,k},$ i.e., the sequences in definitions of $d$-distributionally nearness to $0$ of type $1$ and $d$-distributionally unboundedness of type $1$ 
must be the same ({\it for the sake of brevity, in all remaining parts of this definition, we will assume a priori this condition}; the use of $\cap \cap$ in (3.1));
\item[(ii)] $x$ is a $(d,\tilde{X},2)$-distributionally irregular vector\index{$\tilde{X}$-distributionally irregular vector} for
$(({\mathcal A}_{j,k})_{k\in {\mathbb N}})_{1\leq j\leq N}$ iff $x$ is $d$-distributionally
near to $0$ of type $3$ for $(({\mathcal A}_{j,k})_{k\in {\mathbb N}})_{1\leq j\leq N}$ and $x$ is $d$-distributionally unbounded of type $1$ for $(({\mathcal A}_{j,k})_{k\in {\mathbb N}})_{1\leq j\leq N}$ (the use of $\cap \forall$ in (3.2));
\item[(iii)] $x$ is a $(d,\tilde{X},3)$-distributionally irregular vector\index{$\tilde{X}$-distributionally irregular vector} for
$(({\mathcal A}_{j,k})_{k\in {\mathbb N}})_{1\leq j\leq N}$ iff $x$ is $d$-distributionally
near to $0$ of type $3$ for $(({\mathcal A}_{j,k})_{k\in {\mathbb N}})_{1\leq j\leq N}$ and $x$ is $d$-distributionally unbounded of type $3$ for $(({\mathcal A}_{j,k})_{k\in {\mathbb N}})_{1\leq j\leq N}$
(the use of $\forall \forall$ in (3.3));
\item[(iv)] $x$ is a $(d,\tilde{X},4)$-distributionally irregular vector\index{$\tilde{X}$-distributionally irregular vector} for
$(({\mathcal A}_{j,k})_{k\in {\mathbb N}})_{1\leq j\leq N}$ iff $x$ is $d$-distributionally
near to $0$ of type $4$ for $(({\mathcal A}_{j,k})_{k\in {\mathbb N}})_{1\leq j\leq N}$ and $x$ is $d$-distributionally unbounded of type $3$ for $(({\mathcal A}_{j,k})_{k\in {\mathbb N}})_{1\leq j\leq N}$
(the use of $\forall \exists$ in (3.4));
\item[(v)] $x$ is a $(d,\tilde{X},5)$-distributionally irregular vector\index{$\tilde{X}$-distributionally irregular vector} for
$(({\mathcal A}_{j,k})_{k\in {\mathbb N}})_{1\leq j\leq N}$ iff $x$ is $d$-distributionally
near to $0$ of type $3$ for $(({\mathcal A}_{j,k})_{k\in {\mathbb N}})_{1\leq j\leq N}$ and $x$ is $d$-distributionally unbounded of type $4$ for $(({\mathcal A}_{j,k})_{k\in {\mathbb N}})_{1\leq j\leq N}$
(the use of $\exists \forall$ in (3.5));
\item[(vi)] $x$ is a $(d,\tilde{X},6)$-distributionally irregular vector\index{$\tilde{X}$-distributionally irregular vector} for
$(({\mathcal A}_{j,k})_{k\in {\mathbb N}})_{1\leq j\leq N}$ iff $x$ is $d$-distributionally
near to $0$ of type $4$ for $(({\mathcal A}_{j,k})_{k\in {\mathbb N}})_{1\leq j\leq N}$ and $x$ is $d$-distributionally unbounded of type $1$ for $(({\mathcal A}_{j,k})_{k\in {\mathbb N}})_{1\leq j\leq N}$
(the use of $\cap \exists$ in (3.6));
\item[(vii)] $x$ is a $(d,\tilde{X},7)$-distributionally irregular vector\index{$\tilde{X}$-distributionally irregular vector} for
$(({\mathcal A}_{j,k})_{k\in {\mathbb N}})_{1\leq j\leq N}$ iff $x$ is $d$-distributionally
near to $0$ of type $1$ for $(({\mathcal A}_{j,k})_{k\in {\mathbb N}})_{1\leq j\leq N}$ and $x$ is $d$-distributionally unbounded of type $2$ for $(({\mathcal A}_{j,k})_{k\in {\mathbb N}})_{1\leq j\leq N}$
(the use of $ \forall \cap$ in (3.7));
\item[(viii)] $x$ is a $(d,\tilde{X},8)$-distributionally irregular vector\index{$\tilde{X}$-distributionally irregular vector} for
$(({\mathcal A}_{j,k})_{k\in {\mathbb N}})_{1\leq j\leq N}$ iff $x$ is $d$-distributionally
near to $0$ of type $1$ for $(({\mathcal A}_{j,k})_{k\in {\mathbb N}})_{1\leq j\leq N}$ and $x$ is $d$-distributionally unbounded of type $4$ for $(({\mathcal A}_{j,k})_{k\in {\mathbb N}})_{1\leq j\leq N}$
(the use of $\exists \cap $ in (3.8));
\item[(ix)] $x$ is a $(d,\tilde{X},9)$-distributionally irregular vector\index{$\tilde{X}$-distributionally irregular vector} for
$(({\mathcal A}_{j,k})_{k\in {\mathbb N}})_{1\leq j\leq N}$ iff $x$ is $d$-distributionally
near to $0$ of type $1$ for $(({\mathcal A}_{j,k})_{k\in {\mathbb N}})_{1\leq j\leq N}$ and $x$ is $d$-distributionally unbounded of type $2$ for $(({\mathcal A}_{j,k})_{k\in {\mathbb N}})_{1\leq j\leq N}$
(the use of $ \cup \cap$ in (3.9));
\item[(x)] $x$ is a $(d,\tilde{X},10)$-distributionally irregular vector\index{$\tilde{X}$-distributionally irregular vector} for
$(({\mathcal A}_{j,k})_{k\in {\mathbb N}})_{1\leq j\leq N}$ iff $x$ is $d$-distributionally
near to $0$ of type $3$ for $(({\mathcal A}_{j,k})_{k\in {\mathbb N}})_{1\leq j\leq N}$ and $x$ is $d$-distributionally unbounded of type $2$ for $(({\mathcal A}_{j,k})_{k\in {\mathbb N}})_{1\leq j\leq N}$
(the use of $\cup \forall$ in (3.10));
\item[(xi)] $x$ is a $(d,\tilde{X},11)$-distributionally irregular vector\index{$\tilde{X}$-distributionally irregular vector} for
$(({\mathcal A}_{j,k})_{k\in {\mathbb N}})_{1\leq j\leq N}$ iff $x$ is $d$-distributionally
near to $0$ of type $2$ for $(({\mathcal A}_{j,k})_{k\in {\mathbb N}})_{1\leq j\leq N}$ and $x$ is $d$-distributionally unbounded of type $1$ for $(({\mathcal A}_{j,k})_{k\in {\mathbb N}})_{1\leq j\leq N}$
(the use of $\cap \cup$ in (3.11));
\item[(xii)] $x$ is a $(d,\tilde{X},12)$-distributionally irregular vector\index{$\tilde{X}$-distributionally irregular vector} for
$(({\mathcal A}_{j,k})_{k\in {\mathbb N}})_{1\leq j\leq N}$ iff $x$ is $d$-distributionally
near to $0$ of type $2$ for $(({\mathcal A}_{j,k})_{k\in {\mathbb N}})_{1\leq j\leq N}$ and $x$ is $d$-distributionally unbounded of type $3$ for $(({\mathcal A}_{j,k})_{k\in {\mathbb N}})_{1\leq j\leq N}$
(the use of $ \forall \cup$ in (3.12)).
\end{itemize}
\end{defn}

In the case that $\tilde{X}=X,$ then
we also say that $x$ is a $(d,i)$-distributionally irregular vector for
$(({\mathcal A}_{j,k})_{k\in {\mathbb N}})_{1\leq j\leq N}$ ($i\in {\mathbb N}_{12}$). We similarly define the notion of a
$(d,i)$-distributionally
near to $0$ ($(d,i)$-distributionally $m$-unbounded, $(d,i)$-distributionally unbounded, $(d,\tilde{X},i)$-distributionally irregular, $(d,i)$-distributionally irregular) vector for tuple $({\mathcal A}_{j})_{1\leq j\leq N}$ of MLOs.

We can formulate a great number of comparison principles regarding the inheritance of $(d,\tilde{X},i)$-distributional chaos ($1\leq i\leq 12$), 
$d$-distributionally
near to $0$ of type $i$ vectors and
$d$-distributionally $m$-unbounded vectors of type $i$ ($1\leq i\leq 4$) under the actions of linear topological homeomorphisms acting between the corresponding pivot spaces. Details can be left to the interested readers.

Differences between Banach spaces and Fr\' echet spaces observed in
\cite{marek-trio} also hold for disjoint distributionally unbounded vectors:

\begin{example}\label{gos}
Set $\tilde{B}:=\{k\in {\mathbb N} : {\mathcal A}_{j,k}\mbox{ is purely multivalued for all }j\in {\mathbb N}_{N}\}.$ Let $Y$ be a Banach space and let $\overline{dens}(\tilde{B})=1.$ 
Then any non-zero vector $x\in \bigcap_{j=1}^{N}\bigcap_{k=1}^{\infty}D({\mathcal A}_{j,k})$ is $(d,1)$-distributionally unbounded for
$(({\mathcal A}_{j,k})_{k\in {\mathbb N}})_{1\leq j\leq N}$. To see this, take $B=\tilde{B};$ then for any  $j\in {\mathbb N}_{N}$ and $k\in B,$ choosing arbitrary $x_{j,k}\in {\mathcal A}_{j,k}x,$  we can always find $y_{j,k} \in {\mathcal A}_{j,k}0$ such that we have
$\| x_{j,k}'\|_{Y}=\|x_{j,k}+y_{j,k}\|_{Y}>2^{k},$ with $x_{j,k}'=x_{j,k}+y_{j,k}.$
The situation is quite different in the case that $Y$ is a Fr\' echet space; towards see this, let us again assume that the set $\tilde{B}$ defined above has the upper density equal to $1.$ Then there need not exist a vector $x\in \bigcap_{j=1}^{N}\bigcap_{k=1}^{\infty}D({\mathcal A}_{j,k})$
that is distributionally $m$-unbounded for some $m\in {\mathbb N}$. To illustrate this, consider the case in which $X:=Y:=C({\mathbb R}),$ equipped with the usual topology, and the operator ${\mathcal A}_{k}$ is defined by $D({\mathcal A}_{k}):=X$
and ${\mathcal A}_{j,k}f:=f+C_{[jk,\infty)}({\mathbb R}),$ $k\in {\mathbb N}$ ($j\in {\mathbb N}_{N}$), where $C_{[jk,\infty)}({\mathbb R}) :=\{f \in C({\mathbb R}) : \mbox{supp}(f)\subseteq [jk,\infty)\}.$ Then $\tilde{B}={\mathbb N}$ but for any $f\in X$
we have $\|f+g\|_{m}^{Y}=\|f\|_{m}^{Y}\equiv \sup_{x\in [-m,m]}|f(x)|,$
$g\in C_{[jk,\infty)}({\mathbb R}),$ $m\leq jk.$
\end{example}

Furthermore, in any part of Definition \ref{DC-unbounded-fric-DISJOINT}, say the part (i), we can impose the condition
that there exist an uncountable
set $S\subseteq \bigcap_{j=1}^{N} \bigcap_{k=1}^{\infty} D({\mathcal A}_{j,k}) \cap \tilde{X}$ and
$\sigma>0$ such that for each $\epsilon>0$ and for each pair $x,\
y\in S$ of distinct points we have that for each $j\in {\mathbb N}_{N}$ and $k\in {\mathbb N}$ there exist elements $x_{j,k},\ x_{j,k}'\in {\mathcal A}_{j,k}x$ and $y_{j,k},\ y_{j,k}'\in {\mathcal A}_{j,k}y$ such that:
\begin{align}\label{bzo-kuso}
\begin{split}
& \overline{dens}\Biggl( \bigcap_{j\in {\mathbb N}} \bigl\{k \in {\mathbb N} :
d_{Y}\bigl(x_{j,k},y_{j,k}\bigr)\geq \sigma \bigr\}\Biggr)=1,
\\
& \overline{dens}\Biggl( \bigcap_{j\in {\mathbb N}} \bigl\{k \in {\mathbb N} : d_{Y}\bigl(x_{j,k}',y_{j,k}'\bigr)
<\epsilon \bigr\}\Biggr)=1;
\end{split}
\end{align}   
for these purposes, the adaptation of notion introduced in the last three definitions is obvious (see also the part (i) of Definition \ref{iregularni-prc}). If this is the case, i.e., if we accept the validity of \eqref{bzo-kuso} in place of \eqref{d-jednacina}, let us say that the sequence $({\mathcal A}_{k})_{k\in
{\mathbb N}}$ is weakly
$(\tilde{X},1)$-distributionally chaotic, etc.. It should be noted that the weak $(\tilde{X},1)$-distributional chaos is a very mild sort of chaos for MLOs. Speaking-matter-of-factly, one can employ the analysis from Example \ref{gos} for proving the existence of a substantially large
class of densely weak $(d,1)$-distributionally chaotic MLOs in Banach spaces (for
$d$-chaoticity and $d$-topological mixing property of these operators, one has to assume some extra conditions in the equation \eqref{svij} below,
like the intersection of sets $P_{j}/Q_{j}( \Lambda' )$ with the unit circle and its exterior; cf. \cite{kerry-drew} for more details). For example, we have the following:

\begin{example}\label{deds-MLOS-vg}
Let $X$ be a Banach space.
Suppose that $A$ is
a closed linear operator on $X$ satisfying that there exist an
open connected subset $\emptyset \neq \Lambda$ of ${\mathbb K}={\mathbb C}$
and an analytic mapping $g : \Lambda \rightarrow X \setminus
\{0\}$ such that $A g(\lambda)=\lambda g(\lambda),$ $\lambda \in
\Lambda .$ Let $P_{j}(z)$ and $Q_{j}(z)$ be non-zero complex
polynomials ($1\leq j\leq N$), let $R:=\{z\in {\mathbb C} :
Q_{j}(z)=0\mbox{ for some }j\in {\mathbb N}_{N}\},$ $\Lambda':=\Lambda \setminus R ,$
and let $X':=\text{span}\{  g(\lambda) : \lambda \in \Lambda' \},$ $\tilde{X}:=\overline{X'}.$
Suppose that
\begin{align}\label{svij}
\frac{P_{j}}{Q_{j}}\bigl( \Lambda' \bigr) \subseteq \{ z\in {\mathbb C} : |z|<1 \},\quad j\in {\mathbb N}_{N}
\end{align}
and that, for every $j\in {\mathbb N}_{N},$ the operators
$Q_{j}(A)^{-1}P_{j}(A)$ and $P_{j}( A)Q_{j}(A)^{-1}$ are purely multivalued. Let ${\mathcal A}_{j}$ be either
$Q_{j}(A)^{-1}P_{j}(A)$ or $P_{j}( A)Q_{j}(A)^{-1}$ ($1\leq j\leq N$).
Since for any MLO ${\mathcal A}$ we have ${\mathcal A}0\subseteq {\mathcal A}^{k}0,$ $k\in {\mathbb N},$ the operator
${\mathcal A}_{j}^{k}$ is also purely multivalued ($1\leq j\leq N,$ $k\in {\mathbb N}$). Further on,
for each $\lambda \in \Lambda'$ we can inductively prove that
$$
\Biggl[\frac{P_{j}}{Q_{j}}( \lambda)\Biggr]^{k} g(\lambda) \in  {\mathcal A}_{j}^{k}g(\lambda),\quad k\in {\mathbb N}.
$$
This simply implies on account of \eqref{svij} and the analysis from Example \ref{gos} that any vector
$x\in X'$ is weakly $(d,\tilde{X},1)$-distributionally irregular for the operators
${\mathcal A}_{1},\cdot \cdot \cdot, {\mathcal A}_{N}$ as well as that 
 the operators ${\mathcal A}_{1},\cdot \cdot \cdot, {\mathcal A}_{N}$
are densely weak $(\tilde{X},1)$-distributionally chaotic.
\end{example}

On the other hand, there exists a substantially large class of purely multivalued linear operators that are not weakly $(d,i)$-distributionally chaotic for any $i\in {\mathbb N}_{12}:$

\begin{example}\label{qwer}
(cf. also \cite[Example 4.6]{kerry-drew}) 
Suppose that $A_{1}\in L(X),\cdot \cdot \cdot , A_{N}\in L(X).$
Then any MLO extension of tuple $(A_{1},\cdot \cdot \cdot , A_{N}),$ i.e, any tuple $({\mathcal A}_{1},\cdot \cdot \cdot, {\mathcal A}_{N})$ of MLOs such that $A_{j}\subseteq {\mathcal A}_{j}$ for all $j\in {\mathbb N}_{N},$ has the form $(A_{1}+W_{1},\cdot \cdot \cdot , A_{N}+W_{N}),$ where $W_{i}$ is a linear submanifold of $X$ ($ 1\leq i\leq N$). We already know that an extension
$({\mathcal A}_{1},\cdot \cdot \cdot , {\mathcal A}_{N})
$ of $(A_{1},\cdot \cdot \cdot , A_{N})$ is $d$-topologically mixing if any $W_{j}$ is dense in $X$ ($1\leq j\leq N$). Let us recall that
for any MLO extension ${\mathcal A}=A+W$ of operator $A\in L(X)$ we have
\begin{align*}
{\mathcal A}^{n}x=A^{n}x+\sum \limits_{j=0}^{n-1}A^{j}(W),\quad n\in {\mathbb N},\ x\in X.
\end{align*}
Suppose, further, that $X$ is an infinite-dimensional complex
Hilbert space with the complete orthonormal basis $\{e_{n} : n\in {\mathbb N}\}.$ Put
$A\sum_{n=1}^{\infty}x_{n}e_{n}:=\sum_{n=1}^{\infty}x_{n}e_{n+1},$ for any $x=\sum_{n=1}^{\infty}x_{n}e_{n}\in X.$
Then $(A+W_{1},A^{2}+W_{2},\cdot \cdot \cdot , A^{N}+W_{N})$ is not a weak $(d,i)$-distributionally chaotic extension of
$(A,A^{2},\cdot \cdot \cdot , A^{N})$, where $W_{j}$ is the linear span of
$\{e_{1},e_{2},\cdot \cdot \cdot,e_{j}\}$ ($i\in {\mathbb N}_{12}$, $ 1\leq j\leq N$). 
To see this, it is only worth observing that ($j\in {\mathbb N}_{N},$ $k\in {\mathbb N}$):
$$
\bigl(A^{j}+W_{j} \bigr)^{k}\sum_{n=1}^{\infty}x_{n}e_{n}=\sum_{n=1}^{\infty}x_{n}e_{n+jk}+\mbox{span}\bigl\{e_{1},\cdot \cdot \cdot,e_{jk}\bigr\},\quad x=\sum_{n=1}^{\infty}x_{n}e_{n}\in X,
$$
$\|z\|\geq |x_{n}|$ for all $n\in {\mathbb N}$ and $z\in (A^{j}+W_{j})^{k}\sum_{n=1}^{\infty}x_{n}e_{n}.$
\end{example}

It is also clear that there exists a great number of purely multivalued linear operators that are weakly $(d,i)$-distributionally chaotic but not $(d,i)$-distributionally chaotic ($i\in {\mathbb N}_{12}$). For example, let $i=1,$ 
$X:={\mathbb K}^{n},$ $W$ be a non-trivial subspace of $X$ and 
${\mathcal A}_{j}:=I+W$ for all $j\in {\mathbb N}_{N}.$ Then it can be easily seen that the tuple $({\mathcal A}_{1},\cdot \cdot \cdot, {\mathcal A}_{j})$ is weakly $(d,1)$-distributionally chaotic (with the scrambled set $S=W$) but not $(d,1)$-distributionally chaotic.
Primarily from the practical point of view, Definition \ref{DC-unbounded-fric-DISJOINT} and Definition \ref{iregularni-prc} will be our general framework for further considerations of disjoint distributional chaos. 

Let $\{0\} \neq X' \subseteq \tilde{X}$ be a linear manifold, and let $i\in {\mathbb N}_{12}.$ Then
we say that:
\begin{itemize}
\item[d1.]
 $X'$ is a $(d,\tilde{X},i)$-distributionally
irregular manifold\index{$(d,\tilde{X},i)$-distributionally
irregular manifold} for $(({\mathcal A}_{j,k})_{k\in {\mathbb N}})_{1\leq j\leq N}$
($(d,i)$-distributionally irregular manifold\index{distributionally irregular manifold} in the case that $\tilde{X}=X$)
iff any element $x\in (X' \cap
\bigcap_{j=1}^{N}\bigcap_{k=1}^{\infty}D({\mathcal A}_{j,k})) \setminus \{0\}$ is a
$(d,\tilde{X},i)$-distributionally irregular vector for
$(({\mathcal A}_{j,k})_{k\in {\mathbb N}})_{1\leq j\leq N};$ the notion of a ($(d,i)$-, $(d,\tilde{X},i)$-)distributionally irregular manifold for $({\mathcal A}_{j})_{1\leq j\leq N}$ is defined similarly.
\item[d2.] $X'$ is a uniformly $(d,\tilde{X},i)$-distributionally
irregular manifold\index{$\tilde{X}$-distributionally
irregular manifold!uniformly} for\\ $(({\mathcal A}_{j,k})_{k\in {\mathbb N}})_{1\leq j\leq N}$
(uniformly $(d,i)$-distributionally irregular manifold\index{distributionally irregular manifold!uniformly} in the case that $\tilde{X}=X$)
iff there exists $m\in {\mathbb N}$ such that
any vector $x\in (X' \cap
\bigcap_{j=1}^{N}
\bigcap_{k=1}^{\infty}D({\mathcal A}_{j,k})) \setminus \{0\}$ is both $(d,i)$-distributionally $m$-unbounded (with the meaning clear) and $(d,i)$-distributionally near to $0$ for $(({\mathcal A}_{j,k})_{k\in {\mathbb N}})_{1\leq j\leq N}.$
In this case, $X'$ is $2^{-m}_{\tilde{X}}$-scrambled set for
$(({\mathcal A}_{j,k})_{k\in {\mathbb N}})_{1\leq j\leq N}.$
\end{itemize}
As expected, the existence of disjoint distributionally irregular vectors implies the existence of uniformly disjoint distributionally irregular manifolds. More precisely, we have the following:
\begin{itemize}
\item[d3.]
Suppose that $0\neq x\in \tilde{X} \cap \bigcap_{j=1}^{N}\bigcap_{k=1}^{\infty}D({\mathcal A}_{j,k})$ is a
$(d,\tilde{X},i)$-distributionally irregular vector for $(({\mathcal A}_{j,k})_{k\in {\mathbb N}})_{1\leq j\leq N}.$
Then $X'\equiv span\{x\}$
is a uniformly $(d,\tilde{X},i)$-distributionally irregular manifold for
$(({\mathcal A}_{j,k})_{k\in {\mathbb N}})_{1\leq j\leq N}.$
\end{itemize}
If $X'$ is dense in $\tilde{X},$
then the notions of dense ($(d,i)$-, $(d,\tilde{X},i)$-)distributionally
irregular manifolds, dense uniformly ($(d,i)$-, $(d,\tilde{X},i)$-)distributionally
irregular manifolds, etc., are defined analogically.
It will be said that $({\mathcal A}_{1,k})_{k\in {\mathbb N}}, ({\mathcal A}_{2,k})_{k\in {\mathbb N}},\cdot \cdot \cdot, ({\mathcal A}_{N,k})_{k\in {\mathbb N}}$ are
$(d,\tilde{X},i)$-distributionally chaotic
iff the tuple $(({\mathcal A}_{j,k})_{k\in {\mathbb N}})_{1\leq j\leq N}$ is
$(d,\tilde{X},i)$-distributionally chaotic; a similar terminological agreement will be accepted for operators.

\begin{rem}\label{prc-qwe}
\begin{itemize}
\item[(i)]
If $i\in \{1,2,3,7\},$ resp. $i\in\{4,5,6,8\}$, and $(({\mathcal A}_{j,k})_{k\in
{\mathbb N}})_{1\leq j\leq N}$ is (densely) $(d,\tilde{X},i)$-distributionally chaotic, then for each $j\in {\mathbb N}_{N},$ resp. there exists $j\in {\mathbb N}_{N},$ such that the component $({\mathcal A}_{j,k})_{k\in
{\mathbb N}}$ is (densely) $(\tilde{X},i)$-distributionally chaotic. Furthermore, if we assume that $X'$ is a (uniformly) $(d,\tilde{X},i)$-distributionally
irregular manifold for $(({\mathcal A}_{j,k})_{k\in {\mathbb N}})_{1\leq j\leq N},$ then for each $j\in {\mathbb N}_{N},$ resp. there exists $j\in {\mathbb N}_{N},$ we have that $X'$ is a (uniformly) $\tilde{X}$-distributionally
irregular manifold for $({\mathcal A}_{j,k})_{k\in
{\mathbb N}}.$ Similar statements hold for $(d,i)$-distributionally
near to $0$ vectors, $(d,i)$-distributionally ($m$-)unbounded vectors and $(d,\tilde{X},i)$-distributionally irregular vectors.
\item[(ii)] Let $i\in {\mathbb N}_{12}.$ It is well known that a single-valued linear operator $A$ and its constant multiple $cA$ cannot be $d$-hypercyclic (cf. \cite[p. 299]{bp07} for continuous case, and \cite{kerry-drew} for general case). This is no longer true for $(d,i)$-distributional chaos because
our notion allows that a (densely) $(d,i)$-distributionally chaotic sequence $(({\mathcal A}_{j,k})_{k\in
{\mathbb N}})_{1\leq j\leq N}$ can have the same components (any of them can be repeated a certain finite number of times). Speaking-matter-of-factly, we are interested in finding $(d,i)$-distributionally chaotic tuples whose components are strictly different and
which are not rotations of some other components (as it is well known, distributional chaos is invariant under rotations).
\end{itemize}
\end{rem}

It is not difficult to verify that the notions of $(d,i_{1})$-distributional chaos and $(d,i_{2})$-distributional chaos differ for the sequences of 
continuous linear operators,
provided that $i_{1},\ i_{2}\in {\mathbb N}_{12}$ and $i_{1}\neq i_{2}$ (the same statement holds for $(d,i_{1})$-distributionally irregular vectors and $(d,i_{2})$-distributionally irregular vectors, as well). Because of completeness of our study, we have decided to thoroughly 
illustrate this fact by a series of plain and elaborate examples: 

\begin{example}\label{2}
Suppose that $i=2.$ Then $(d,2)$-distributional chaos implies $(d,i)$-distributional chaos for $i\in \{3,4,5,6,10,11,12\}$ and we need to prove that 
$(d,2)$-distributional chaos does not imply $(d,i)$-distributional chaos for $i\in \{1,7,8,9\}.$ Towards this end,
set $X:={\mathbb K}^{n}.$ Take any 
two disjoint subsets $A$ and $B$ of ${\mathbb N}$ such that ${\mathbb N}=A\cup B$ and $\overline{dens}(A)=\overline{dens}(B)=1.$ If 
$k\in A$ and $j\in {\mathbb N}_{N},$ set
$T_{j,k}=0.$ Further on, it is clear that there exist pairwise disjoint subsets $B_{1},\cdot \cdot \cdot,B_{N}$ of $B$ such that 
$\cup_{j=1}^{N}B_{j}=B$
and $\overline{dens}(B_{j})=1$ for all $j\in {\mathbb N}_{N}.$ For any $k\in B,$ set 
$T_{j,k}:=0$ if $k\notin B_{j}$ and 
$T_{j,k}:=diag(a_{11}^{jk},\cdot \cdot \cdot,a_{nn}^{jk})$ such that $|a_{ii}^{jk}|\geq j+k$ for all $i\in {\mathbb N}_{n},$ otherwise ($j\in {\mathbb N}_{N}$). Then (3.2) holds with $S=X$ and $\sigma=1$ while (3.1), (3.7), (3.8) and (3.9) do not hold.
\end{example}

\begin{example}\label{3}
Suppose that $i=3.$ Then $(d,3)$-distributional chaos implies $(d,i)$-distributional chaos for $i\in\{4,5,10,12\}$ and we need to prove that 
$(d,3)$-distributional chaos does not imply $(d,i)$-distributional chaos for any integer $i\in \{1,2,6,7,8,9,11\}.$ If $i\in \{7,8,9\},$ then we can argue as in the previous example; if $i\in \{1,2,6,11\},$
then we can argue as in Example \ref{7} below.
\end{example}

\begin{example}\label{4}
Suppose that $i=4.$ In this case, the only consequence of $(d,4)$-distributional chaos is $(d,12)$-distributional chaos. To see that $(d,4)$-distributional chaos does not imply $(d,i)$-distributional chaos for $i\in \{1,2,3,5,7,8,9,10\},$ we can take any $T_{1}\in L(X)$ that is distributionally chaotic and put $T_{j}:=2I$ for $j\in {\mathbb N}_{N}\setminus \{1\}.$ To see that $(d,4)$-distributional chaos does not imply $(d,i)$-distributional chaos for $i\in \{6,11\},$ set $X:={\mathbb K}^{n}.$ After that, take any 
two disjoint subsets $A$ and $B$ of ${\mathbb N}$ such that ${\mathbb N}=A\cup B$ and $\overline{dens}(A)=\overline{dens}(B)=1.$ If $k\in B$ and $j\in {\mathbb N}_{N},$ set $T_{j,k}:=0.$ Further on, it is clear that there exist pairwise disjoint subsets $A_{1},\cdot \cdot \cdot,A_{N}$ of $A$ such that 
$\cup_{j=1}^{N}A_{j}=A$
and $\overline{dens}(A_{j})=1$ for all $j\in {\mathbb N}_{N}.$ If $k\in A_{j}$ for some $j\in {\mathbb N}_{N},$ we define $T_{j,k}:=diag(a_{11}^{jk},\cdot \cdot \cdot,a_{nn}^{jk})$ such that $|a_{ii}^{jk}|\geq j+k$ for all $i\in {\mathbb N}_{n};$ if $j\in {\mathbb N}_{N}$ and $k\notin A_{j},$ we define 
$T_{j,k}:=0$. Then (3.4) holds with $S=X$ and $\sigma=1$ while (3.6) and (3.11) do not hold.
\end{example}

\begin{example}\label{5}
Suppose that $i=5.$ In this case, the only consequence of $(d,5)$-distributional chaos is $(d,10)$-distributional chaos. To see that $(d,5)$-distributional chaos does not imply $(d,i)$-distributional chaos for $i\in \{1,2,3,4,6,7,11,12\},$ we can take any $T_{1}\in L(X)$ that is distributionally chaotic and put $T_{j}:=0$ for $j\in {\mathbb N}_{N}\setminus \{1\}.$ To see that $(d,5)$-distributional chaos does not imply $(d,i)$-distributional chaos for $i\in \{8,9\},$ we can simply set $X:={\mathbb K}^{n}$
and use the procedure similar to those ones employed in Example \ref{2} and Example \ref{4}.
%
%
\end{example}

\begin{example}\label{6}
Suppose that $i=6.$ In this case, the consequence of $(d,6)$-distributional chaos is $(d,i)$-distributional chaos for any $i\in \{4,11,12\}$. To see that $(d,6)$-distributional chaos does not imply $(d,i)$-distributional chaos for $i\in \{1,2,3,5,7,8,9,10\}$ we can take, as in the case that $i=4,$ any $T_{1}\in L(X)$ that is distributionally chaotic and put $T_{j}:=2I$ for $j\in {\mathbb N}_{N}\setminus \{1\}.$ 
\end{example}

\begin{example}\label{7}
Suppose that $i=7.$ All that we need to show is that $(d,7)$-distributional chaos does not imply $(d,i)$-distributional chaos for any $i\in \{1,2,6,11\}$. To see this,
we can set $X:={\mathbb K}^{n}$
and slightly modify Example \ref{2} and Example \ref{4}.
%
%
\end{example}

\begin{example}\label{8}
Suppose that $i=8.$ In this case, the consequence of $(d,8)$-distributional chaos is 
$(d,i)$-distributional chaos for $i\in\{5,9,10\}$. In order to see that $(d,8)$-distributional chaos does not imply $(d,i)$-distributional chaos for $i\in \{1,2,3,4,6,7,11,12\}$ we can take, as in the case that $i=5,$ any $T_{1}\in L(X)$ that is distributionally chaotic and put $T_{j}:=0$ for $j\in {\mathbb N}_{N}\setminus \{1\}.$ 
\end{example}

\begin{example}\label{9}
Suppose that $i=9.$ In this case, the  only  consequence of $(d,9)$-distributional chaos is 
$(d,10)$-distributional chaos. To see that $(d,9)$-distributional chaos does not imply $(d,i)$-distributional chaos for $i\in \{1,2,3,4,6,7,11,12\}$ we can reexamine the first example from the case 
$i=8.$ 
To see that $(d,9)$-distributional chaos does not imply $(d,i)$-distributional chaos for $i\in \{5,8\},$ set $X:={\mathbb K}^{n}.$ After that, take any 
two disjoint subsets $A$ and $B$ of ${\mathbb N}$ such that ${\mathbb N}=A\cup B$ and $\overline{dens}(A)=\overline{dens}(B)=1.$ If $k\in B$ and $j\in {\mathbb N}_{N},$ set $T_{j,k}:=0.$ Further on, it is clear that there exist pairwise disjoint subsets $A_{1},\cdot \cdot \cdot,A_{N}$ of $A$ such that 
$\cup_{j=1}^{N}A_{j}=A$
and $\overline{dens}(A_{j})<1$ for all $j\in {\mathbb N}_{N}.$ For any $k\in A,$ set 
$T_{j,k}:=0$ if $k\notin A_{j}$ and 
$T_{j,k}:=diag(a_{11}^{jk},\cdot \cdot \cdot,a_{nn}^{jk})$ such that $|a_{ii}^{jk}|\geq j+k$ for all $i\in {\mathbb N}_{n},$ otherwise. Then (3.9) holds with $S=X$ and $\sigma=1$ while (3.5) and (3.8) do not hold.
\end{example}

\begin{example}\label{10}
Suppose that $i=10.$ Since $(d,5)$-distributional chaos implies $(d,10)$-distributional chaos, our analysis from Example \ref{5} shows that $(d,10)$-distributional chaos does not imply $(d,i)$-distributional chaos
for any integer $i\in \{1,2,3,4,6,7,8,9,11,12\}.$ All that remains to be shown is that $(d,10)$-distributional chaos does not imply $(d,5)$-distributional chaos. Set $X:={\mathbb K}^{n}$ and slightly modify the procedure from Example \ref{9}.
%
\end{example}

\begin{example}\label{11}
Suppose that $i=11.$ Then $(d,11)$-distributional chaos implies $(d,12)$-distributional chaos and using the first example presented in Example \ref{4} we get that $(d,11)$-distributional chaos does not imply $(d,i)$-distributional chaos
for any $i\in \{1,2,3,5,7,8,9,10\}.$ Now we will prove that $(d,11)$-distributional chaos does not imply $(d,4)$-distributional chaos and $(d,6)$-distributional chaos. We can simply set $X:={\mathbb K}^{n}$ and slightly modify Example \ref{9}.
\end{example}

\begin{example}\label{12}
Suppose that $i=12.$ In this case, $(d,4)$-distributional chaos implies $(d,12)$-distributional chaos so that $(d,12)$-distributional chaos does not imply $(d,i)$-distributional chaos for $i\in \{1,2,3,5,7,8,9,10\}.$
To see that $(d,12)$-distributional chaos does not imply $(d,i)$-distributional chaos for $i\in \{4,6\},$ we can argue as in the previous example; to see that $(d,12)$-distributional chaos does not imply $(d,11)$-distributional chaos, we can argue as in Example \ref{7}.
\end{example}

In Example \ref{2}-Example \ref{12}, we have used the sequences of continuous linear operators. It is expected that, for continuous linear operators $T_{1}\in L(X),\cdot \cdot \cdot,T_{N}\in L(X),$ the notions of $(d,i_{1})$-distributional chaos and $(d,i_{2})$-distributional chaos ($(d,i_{1})$-distributionally irregular vectors and $(d,i_{2})$-distributionally irregular vectors)
do not coincide for different values of indexes  $i_{1},\ i_{2}\in {\mathbb N}_{12},$ as well. We will present only two illustrative examples concerning this question:

\begin{example}\label{sunce} (cf. \cite[Remark 21]{2011})
Consider
a weighted forward shift $F_{\omega}: 
l^{2} \rightarrow l^{2},$ defined by $F_{\omega} (x_{1}, x_{2}, \cdot \cdot \cdot) \mapsto (0, \omega_{1}x_{1},\omega_{2}x_{2},\cdot \cdot \cdot),$ where the sequence of weights $\omega =(\omega_{k})_{k\in {\mathbb N}}$ consists of sufficiently large blocks of $2$'s and blocks of $(1/2)$'s such that the vector $e_{1} = (1, 0, \cdot \cdot \cdot)$ is a
distributionally irregular vector for $F_{\omega}$.
To precise this, assume that $(a_{n})_{n\in {\mathbb N}}$ and $(b_{n})_{n\in {\mathbb N}}$ are two sequences of natural numbers such that:
\begin{itemize}
\item[(i)] $1<b_{1}<a_{1}<b_{2}<a_{2}<\cdot \cdot \cdot ;$
\item[(ii)] there exists $n_{0}\in {\mathbb N}$ such that $b_{n}>n\sum_{i=1}^{n-1}(a_{i}-b_{i})+n^{2}+n+1$ and $a_{n}>n\sum_{i=1}^{n-1}(a_{i}-b_{i})+nb_{n}+n^{2}+n+1$ for all $n\in {\mathbb N}$ with $n\geq n_{0};$
\item[(iii)] $\limsup_{n\rightarrow \infty}\frac{\sum_{i=1}^{n}a_{i}(1-i^{-1})}{\sum_{i=1}^{n}(a_{i}+b_{i})}=1$ and $\limsup_{n\rightarrow \infty}\frac{\sum_{i=1}^{n}b_{i}(1-i^{-1})}{b_{n}+\sum_{i=1}^{n-1}(a_{i}+b_{i})}=1.$ 
\end{itemize}
Define the sequence of weights $\omega =(\omega_{k})_{k\in {\mathbb N}}$ in the following way: $\omega_{k}=:2$ iff $k\in [1,b_{1}]$ or there exists an integer $l\in {\mathbb N}$ such that $k\in [1+\sum_{i=1}^{l}(a_{i}+b_{i}),b_{l+1}+\sum_{i=1}^{l}(a_{i}+b_{i})];$ otherwise, we set
$\omega_{k}:=1/2.$ Then the sets 
$$
A:={\mathbb N} \cap \bigcup_{n\in {\mathbb N}}\Biggl[ 1+\sum_{i=1}^{n-1}(a_{i}+b_{i})+b_{n}+n^{-1}a_{n}+n,\sum_{i=1}^{n}(a_{i}+b_{i}) \Biggr]:={\mathbb N} \cap \bigcup_{n\in {\mathbb N}}\bigl[ A_{n}^{1},A_{n}^{2} \bigr]
$$
and
$$
B:={\mathbb N} \cap \bigcup_{n\in {\mathbb N}}\Biggl[ 1+\sum_{i=1}^{n-1}(a_{i}+b_{i})+n^{-1}b_{n}+n,b_{n}+\sum_{i=1}^{n-1}(a_{i}+b_{i}) \Biggr]:={\mathbb N} \cap \bigcup_{n\in {\mathbb N}}\bigl[ B_{n}^{1},B_{n}^{2} \bigr]
$$
have the upper densities equal to $1$ because of condition (iii); here we only want to note that the correponding sequence in the definition of upper denisty of $A$ ($B$) can be chosen to be $n_{k}=\sum_{i=1}^{n_{k}}(a_{i}+b_{i})$. 
In order to see that $\lim_{n\in A}F_{\omega}e_{1}=0$ ($\lim_{n\in B}\|F_{\omega}e_{1}\|=\infty$), it suffices to observe that for each $n\in {\mathbb N}$ with $n\geq n_{0}$ and $n\in [ A_{n}^{1},A_{n}^{2}]$ ($n\in [ B_{n}^{1},B_{n}^{2}]$) we have $\omega_{1}\omega_{2}\cdot \cdot \cdot \omega_{n}<2^{-n}$ ($\omega_{1}\omega_{2}\cdot \cdot \cdot \omega_{n}>2^{n}$). A concrete example can be simply given following the analysis contained in \cite[Example 10]{gimenez-p}; we can take $b_{1}=2^{1^{2}},$ $a_{1}=2^{1^{2}}+2^{2^{2}},$ $b_{2}=2^{1^{2}}+2^{2^{2}}+2^{3^{2}},$ $a_{2}=2^{1^{2}}+2^{2^{2}}+2^{3^{2}}+2^{4^{2}},\cdot \cdot \cdot .$ 

Define now $\sigma :=(1/\omega_{k})_{k\in {\mathbb N}}.$ Then $e_{1} = (1, 0, \cdot \cdot \cdot)$ is a
distributionally irregular vector for $F_{\sigma},$ as well,
because $\lim_{n\in B}F_{\omega}e_{1}=0$ and $\lim_{n\in A}\|F_{\omega}e_{1}\|=\infty$. On account of this, the operators $F_{\omega}$ and $F_{\sigma}$ are $(d,i)$-distributionally chaotic for any $i\in \{3,4,5,10,12\}$ (with the scrambled set $S=span\{e_{1}\}$).
On the other hand, for any integer $i\in \{1,7,8,9\}$ we have that the operators $F_{\omega}$ and $F_{\sigma}$ are not $(d,i)$-distributionally chaotic. To see this, it suffices to observe that the second equality in (3.1), for such values of index $i,$ is violated. Strictly speaking, for any non-zero vector $\langle x_{n}\rangle_{n\in {\mathbb N}}\in l^{2}$ there exists $n_{0}\in {\mathbb N}$ such that $x_{n_{0}}\neq 0$ and, for every integer $k\in {\mathbb N},$ we have 
\begin{align*}
\frac{1}{|x_{n_{0}}|}\Bigl \|F_{\omega}^{k}\langle x_{n}\rangle_{n\in {\mathbb N}}+F_{\sigma}^{k}\langle x_{n}\rangle_{n\in {\mathbb N}}\Bigr\|&\geq \omega_{n_{0}}\cdot \cdot \cdot \omega_{k+n_{0}}+\sigma_{n_{0}}\cdot \cdot \cdot \sigma_{k+n_{0}}
\\ & \geq \min_{1\leq j\leq k}\Bigl(2^{2j-k}+2^{k-2j}\Bigr)\geq 2.
\end{align*}
Finally, it is worth noting that for each $k\in {\mathbb N}$ we have $\|F_{\omega}^{k}\| =\sup_{n\in {\mathbb N}}\omega_{n}\omega_{n+1}\cdot \cdot \cdot \omega_{n+k} \geq 2^{k}$ and 
$\|F_{\sigma}^{k}\| =\sup_{n\in {\mathbb N}}\sigma_{n}\sigma_{n+1}\cdot \cdot \cdot \sigma_{n+k}\geq 2^{k},$ so that Proposition \ref{owq-prcko} below implies the existence of a vector $x\in l^{2}$ such that $\lim_{k\rightarrow \infty}\|F_{\omega}^{k}x\|=\lim_{k\rightarrow \infty}\|F_{\sigma}^{k}x\|=+\infty$ (the question whether the operators $F_{\omega}$ and $F_{\sigma}$ are $(d,i)$-distributionally chaotic for $i\in \{2,6,11\}$ is interested, but we will not analyze it here). 
\end{example}  

\begin{example}\label{count-grof}
In \cite[Theorem 3.7]{countable}, Z. Yin, S. He and Y. Huang have shown that, for any two positive real numbers $a$ and $b$ such that $a<b,$ there exists an invertible operator $T$ acting on a Hilbert space $X$ such that $[a, b] =\{\lambda >0 : \lambda T \mbox{ is distributionally chaotic}\}$ and for any distinct values $\lambda_{1},\ \lambda_{2} \in [a, b],$ the operators $\lambda_{1}T$ and $\lambda_{2}T$ have no common Li-Yorke irregular vectors (see e.g. \cite[Definition 3]{countable} for the notion). Let $\lambda_{1}<\lambda_{2}$ and $\lambda_{1},\ \lambda_{2} \in [a, b].$ Then it is immediate from definition that $\lambda_{1}T$ and $\lambda_{2}T$ are $(d,i)$-distributionally chaotic for $i\in \{5,8,9,10\}$ iff $\lambda_{2}T$ is distributionally chaotic, which is the case, as well as that $\lambda_{1}T$ and $\lambda_{2}T$ are $(d,i)$-distributionally chaotic for $i\in \{4,6,11,12\}$ iff $\lambda_{1}T$ is distributionally chaotic, which is the case.
On the other hand, these operators cannot be $(d,1)$-distributionally chaotic because, if we suppose the contrary, then any non-zero vector $x\in S-S,$ where $S$ denotes the corresponding $(d,1)$-scrambled set for the operators  
$\lambda_{1}T$ and $\lambda_{2}T,$ will be a distributionally irregular vector of the operator $\lambda T,$ for any $\lambda \in (\lambda_{1},\lambda_{2}).$ This clearly contradicts the above-mentioned theorem. Furthermore,
the operators $\lambda_{1}T$ and $\lambda_{2}T$ cannot be $(d,7)$-distributionally chaotic because, if we suppose the contrary, any non-zero vector $x\in S-S,$ where $S$ denotes the corresponding $(d,7)$-scrambled set for the operators  
$\lambda_{1}T$ and $\lambda_{2}T,$ will be a Li-Yorke irregular vector for both operators $\lambda_{1} T$ and $\lambda_{2}T,$ which again contradicts the above-mentioned theorem. Finally, let us show that $\lambda_{1}T$ and $\lambda_{2}T$ cannot be $(d,2)$-distributionally chaotic or $(d,3)$-distributionally chaotic. If we suppose the contrary, then
for each non-zero vector $z\in S-S
$ there exist two strictly increasing sequences $(n_{k})$ and $(l_{k})$
of positive integers such that $\lim_{k\rightarrow \infty}\|(\lambda_{j}T)^{n_{k}}z\|=0$ and $\limsup_{k\rightarrow \infty}\|(\lambda_{j}T)^{l_{k}}z\|>0$ ($j=1,2$); here, $S$ denotes the corresponding $(d,\sigma,i)$-scrambled set. By the proofs of \cite[Theorem 3.3, Theorem 3.7]{countable}, this would imply that there exists a constant $c(\lambda_{1},\lambda_{2}),$ independent of $z,$ such that $\|(\lambda_{1}T)^{n}z\|\leq c(\lambda_{1},\lambda_{2})\|z\|$ for all $n\in {\mathbb N}$ and therefore
$\|z\|\geq \sigma /c(\lambda_{1},\lambda_{2}).$ This is a contradiction because the set $S-S$ cannot be bounded away from zero.
\end{example}

Concerning the last example, we would like to note that Z. Yin and Y. Huang have recently proved in \cite{studia-china} that for any open set $U\subseteq (0,\infty)$ which is bounded away from zero, there exists a bounded linear operator $T$ on $l^{p},$ where $1\leq p<\infty,$ such that $U=\{\lambda >0 : \lambda T \mbox{ is distributionally chaotic}\}.$ 
Motivated by the results achieved in \cite{studia-china}, for any integer $i\in {\mathbb N}_{12},$ any tuple $\vec{r}=(r_{1},r_{2},\cdot \cdot \cdot,r_{N})\in {\mathbb N}^{N}$ and any multivalued linear operator ${\mathcal A}$ on a Fr\' echet space $X,$ we introduce the set 
\begin{align*}
DDC_{{\mathcal A},i,\vec{r}}:=\Bigl\{ \vec{\lambda}=\bigl(\lambda_{1},\lambda_{2},\cdot \cdot \cdot,\lambda_{N}\bigr) \in {\mathbb K}^{N} : & \mbox{ the tuple }\bigl( \lambda_{1}{\mathcal A}^{r_{1}},\lambda_{2}{\mathcal A}^{r_{2}},\cdot \cdot \cdot,\lambda_{N}{\mathcal A}^{r_{N}} \bigr)
\\ & \mbox{ is }(d,i)\mbox{-distributionally  chaotic} \Bigr\}.
\end{align*}
Describing the structure of set $
DDC_{{\mathcal A},i,\vec{r}}$ is quite non-trivial and requires a series of further analyses.
The existence of invariant $(d,i)$-distributionally
scrambled sets and the existence of common $(d,i)$-distributionally irregular vectors for tuples of multivalued linear operators are delicate problems that will not be discussed here, as well (see \cite{afa-raj}-\cite{studia-china} and references quoted therein for further information in this direction).

We close this section by stating the following simple proposition, which has been already considered in \cite{marek-trio} for the case that $N=1;$ 
for $(({\mathcal A}_{j,k})_{k\in {\mathbb N}})_{1\leq j\leq N}$ given in advance, we
define $(({\mathbb A}_{j,k})_{k\in {\mathbb N}})_{1\leq j\leq N}$ by ${\mathbb A}_{j,k}:=({\mathcal A}_{j,k})_{|\tilde{X}}$ ($k\in {\mathbb N},\ 1\leq j\leq N$):

\begin{prop}\label{subspace-fric-DISJOINT}
Let $i\in {\mathbb N}_{12},$ let $\tilde{X}$ be a closed linear subspace of $X,$ and let $\{0\} \neq X'$ be a linear subspace of $\tilde{X}$.
\begin{itemize}
\item[(i)] The sequence $(({\mathcal A}_{j,k})_{k\in {\mathbb N}})_{1\leq j\leq N}$ is $(d,\tilde{X},i)$-distributionally chaotic
iff the sequence $(({\mathbb A}_{j,k})_{k\in {\mathbb N}})_{1\leq j\leq N}$ is $(d,i)$-distributionally chaotic.
\item[(ii)] A vector $x$ is a $(d,\tilde{X},i)$-distributionally irregular
vector for $(({\mathcal A}_{j,k})_{k\in {\mathbb N}})_{1\leq j\leq N}$
iff $x$ is a $(d,i)$-distributionally irregular vector for $(({\mathbb A}_{j,k})_{k\in {\mathbb N}})_{1\leq j\leq N}.$
\item[(iii)] A manifold $X'$ is a (uniformly) $(d,\tilde{X},i)$-distributionally irregular manifold for
$(({\mathcal A}_{j,k})_{k\in {\mathbb N}})_{1\leq j\leq N}$ iff $X'$ is a (uniformly) $(d,i)$-distributionally irregular manifold for the sequence
$(({\mathbb A}_{j,k})_{k\in {\mathbb N}})_{1\leq j\leq N}.$
\end{itemize}
\end{prop}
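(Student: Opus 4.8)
The plan is to observe that the entire statement reduces to a direct comparison of definitions, once one unwinds what the restriction ${\mathbb A}_{j,k}:=({\mathcal A}_{j,k})_{|\tilde{X}}$ does. First I would record the two defining facts of the restriction, namely $D({\mathbb A}_{j,k})=D({\mathcal A}_{j,k})\cap \tilde{X}$ and ${\mathbb A}_{j,k}x={\mathcal A}_{j,k}x$ for every $x\in D({\mathbb A}_{j,k})$. From the first fact I would deduce the key set identity
\begin{align*}
\bigcap_{j=1}^{N}\bigcap_{k=1}^{\infty}D({\mathbb A}_{j,k})=\tilde{X}\cap \bigcap_{j=1}^{N}\bigcap_{k=1}^{\infty}D({\mathcal A}_{j,k}),
\end{align*}
so that the admissible scrambled sets $S$ (resp. the admissible irregular vectors, resp. the admissible manifolds $X'\subseteq \tilde{X}$) coincide for the two formulations.

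Next, for any $x$ lying in this common set --- equivalently, for any $x\in \tilde{X}$ that lies in all the relevant domains --- the second fact gives that the set-valued image ${\mathbb A}_{j,k}x$ is literally equal to ${\mathcal A}_{j,k}x$. Hence the admissible choices $x_{j,k}\in {\mathbb A}_{j,k}x$ are exactly the admissible choices $x_{j,k}\in {\mathcal A}_{j,k}x$, and the same holds for $y_{j,k}$. Since the metric $d_{Y}$ and the seminorms $p_{m}^{Y}$ appearing throughout Definition \ref{DC-unbounded-fric-DISJOINT}, Definition \ref{DC-zero-DISJOINT}, Definition \ref{DC-unbounded-DISJOINT} and Definition \ref{iregularni-prc} all live on the unchanged target space $Y$, every one of the conditions (3.i), and every clause defining $d$-distributional nearness to $0$ and $d$-distributional $m$-unboundedness of types $1$--$4$, has identical meaning whether it is read for $(({\mathcal A}_{j,k})_{k})_{j}$ under the constraint $x\in \tilde{X}$ or for $(({\mathbb A}_{j,k})_{k})_{j}$ without further constraint.

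With these two observations in hand, part (i) follows by inserting the identifications into Definition \ref{DC-unbounded-fric-DISJOINT}: a pair $(S,\sigma)$ witnesses $(d,\tilde{X},i)$-distributional chaos of $(({\mathcal A}_{j,k})_{k})_{j}$ if and only if the same pair witnesses $(d,i)$-distributional chaos of $(({\mathbb A}_{j,k})_{k})_{j}$ (and the density of $S$ in $\tilde{X}$, if required, is literally the same condition on both sides). Part (ii) follows identically by running the same substitution through the twelve clauses of Definition \ref{iregularni-prc}, each of which is assembled only from the type-$1$--$4$ nearness and unboundedness notions already matched, noting that $x\in \tilde{X}$ is forced on both sides because $\bigcap D({\mathbb A}_{j,k})\subseteq \tilde{X}$. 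Part (iii) then follows from (ii) applied pointwise on $X'\setminus \{0\}$, using $X'\subseteq \tilde{X}$ to replace $X'\cap \bigcap D({\mathcal A}_{j,k})$ by $X'\cap \bigcap D({\mathbb A}_{j,k})$; for the uniform variant the witnessing index $m\in {\mathbb N}$ is the same on both sides, since the seminorms $p_{m}^{Y}$ are untouched by the restriction.

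There is essentially no genuine obstacle here: the result is a bookkeeping unwinding of definitions, and the text itself describes it as a rewording of the $N=1$ case from \cite{mendoza}. The only point requiring a moment's care is checking that the domain intersection commutes with $\tilde{X}$, so that the constraint $S\subseteq \tilde{X}$ on one side is automatic from $S\subseteq \bigcap D({\mathbb A}_{j,k})$ on the other, and that the quantifier and set-operation structure ($\cap,\cup,\forall,\exists$) separating the twelve types is preserved verbatim under the substitution $x_{j,k}\in {\mathcal A}_{j,k}x \leftrightarrow x_{j,k}\in {\mathbb A}_{j,k}x$ --- which it is, since we never alter $j$, $k$, $Y$, or the upper densities involved.
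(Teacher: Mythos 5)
Your proposal is correct and is exactly the intended argument: the paper states Proposition \ref{subspace-fric-DISJOINT} without proof as a "simple" rewording of the $N=1$ case from \cite{mendoza}, and the content is precisely the definitional bookkeeping you carry out, namely that $D({\mathbb A}_{j,k})=D({\mathcal A}_{j,k})\cap\tilde{X}$ forces the admissible sets $S$, vectors $x$ and manifolds $X'$ to coincide on both sides, while ${\mathbb A}_{j,k}x={\mathcal A}_{j,k}x$ leaves every condition (3.i) and every nearness/unboundedness clause unchanged. No gap; nothing further is needed.
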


\section{From ordinary to disjoint distributional chaoticity: formulation and proof of our main structural results}\label{profsor}
First of all, we will reconsider and slightly generalize the assertions of \cite[Proposition 7, Proposition 9]{2013JFA} for disjoint sequences of single-valued linear operators in Fr\' echet and Banach spaces.

\begin{thm}\label{sequences}
Suppose that $((T_{j,k})_{k\in {\mathbb N}})_{1\leq j\leq N}$ is a tuple of operators in $L(X,Y).$ If the following two conditions are satisfied:
\begin{itemize}
\item[($I_{0,\cap}):$]
there exists a dense linear subspace
$X_{0}$ of $X$ satisfying that for each
$x\in X_{0}$ there exists a set $A_{x}\subseteq {\mathbb N}$
such that 
$\overline{dens}(A_{x})=1$ and $\lim_{k\in
A_{x}}T_{j,k}x=0,$ $1\leq j\leq N;$ 
\item[$(I_{\infty,\cap}):$] there exist a zero sequence $(y_{l})$ in $X,$ a
number $\epsilon>0,$ a strictly increasing sequence $(N_{l})$ in
${\mathbb N}$ and an integer $m\in {\mathbb N}$ such that, for every $l\in {\mathbb N},$ we have  
$$
card\Bigl( \Bigl\{1\leq k\leq N_{l} : (\forall j\in {\mathbb N}_{N})\, p_{m}^{Y}\bigl(T_{j,k}y_{l}\bigr)
>\epsilon \Bigr\}\Bigr)\geq N_{l}\bigl(1-l^{-1}\bigr),
$$
(for every $l\in {\mathbb N},$ 
$
card\bigl(\{1\leq k\leq N_{l} : (\forall j\in {\mathbb N}_{N})\, \|T_{j,k}y_{l} \|_{Y}
>\epsilon\}\bigr)\geq N_{l}(1-l^{-1}),
$
in the case that $Y$ is a Banach space), 
\end{itemize}
then there exists a $(d,1)$-distributionally irregular vector for 
$((T_{j,k})_{k\in {\mathbb N}})_{1\leq j\leq N},$ and particularly, $((T_{j,k})_{k\in {\mathbb N}})_{1\leq j\leq N}$ is $(d,1)$-distributionally chaotic.
\end{thm}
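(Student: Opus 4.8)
The plan is to produce a single nonzero vector $x$ that is simultaneously $d$-distributionally near to $0$ of type $1$ and $d$-distributionally unbounded of type $1$ for $((T_{j,k})_{k\in{\mathbb N}})_{1\le j\le N}$; since the operators are single-valued and continuous, the elements $x_{j,k}=T_{j,k}x$ realizing the two conditions automatically coincide, so such an $x$ is exactly a $(d,1)$-distributionally irregular vector in the sense of Definition \ref{iregularni-prc}(i). Granting its existence, item (d3) above shows that $\operatorname{span}\{x\}$ is a uniformly $(d,1)$-distributionally irregular manifold; taking $S=\{tx:t\in[1,2]\}$ and $\sigma=2^{-m-1}$, where $m$ is the index for which $x$ is $d$-distributionally $m$-unbounded, one checks directly from translation invariance of $d_Y$ and the inequality $d_Y(\alpha z,\beta z)\ge\frac{|\alpha-\beta|}{1+|\alpha-\beta|}d_Y(0,z)$ that $S$ is an uncountable $(d,\sigma,1)$-scrambled set. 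Hence everything reduces to the \emph{existence} of one $(d,1)$-distributionally irregular vector.

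I would run the Baire category machinery of \cite[Proposition 7, Proposition 9]{2013JFA}, keeping track of all indices $j$ at once. First record the standard equivalence (a merging argument for sets of upper density $1$): $x$ is $d$-distributionally near to $0$ of type $1$ iff $\overline{dens}(\{k:\max_{1\le j\le N}d_Y(T_{j,k}x,0)<\delta\})=1$ for every $\delta>0$, and $x$ is $d$-distributionally $m$-unbounded of type $1$ iff $\overline{dens}(\{k:\min_{1\le j\le N}p_m^Y(T_{j,k}x)>R\})=1$ for every $R>0$. Because each $T_{j,k}$ is continuous and the seminorm/metric inequalities are strict, each defining condition ``at least a prescribed fraction of $k\le n$ satisfy $\dots$'' is an open condition on $x$, so both the set $\mathcal Z$ of type-$1$ near-to-$0$ vectors and the set $\mathcal U_m$ of type-$1$ $m$-unbounded vectors are $G_{\delta}$. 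Condition $(I_{0,\cap})$ gives at once $X_0\subseteq\mathcal Z$, whence $\mathcal Z$ is a \emph{dense} $G_{\delta}$ and therefore residual in the Baire space $X$.

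The crux is the density of $\mathcal U_m$, which is where $(I_{\infty,\cap})$ enters and where the main obstacle lies. To meet an arbitrary basic open set around $x_0$, I would perturb $x_0\mapsto x_0+\rho y_l$: at the scale $n=N_l$ the counting hypothesis supplies a set $G_l\subseteq[1,N_l]$ with $\operatorname{card}(G_l)\ge N_l(1-l^{-1})$ on which $\min_j p_m^Y(T_{j,k}y_l)>\epsilon$, and on $G_l$ one has $\min_j p_m^Y(T_{j,k}(x_0+\rho y_l))\ge\rho\epsilon-\max_{1\le j\le N,\,k\le N_l}p_m^Y(T_{j,k}x_0)$, which exceeds any prescribed $R$ once $\rho$ is large. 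The difficulty is to force this fraction above $1-R^{-1}$ while simultaneously keeping $\rho y_l$ inside the given neighbourhood of $x_0$: the minimal admissible $\rho$ grows with $\max_{k\le N_l}p_m^Y(T_{j,k}x_0)$, which need not stay bounded as $l\to\infty$, so the smallness of $\rho y_l$ is not automatic from $y_l\to0$ alone. Resolving this tension by a diagonal choice of $l$ and $\rho$ — exactly the quantitative bookkeeping carried out in the proof of \cite[Proposition 9]{2013JFA}, now with $\min_j$ in place of a single norm — is the step I expect to be hardest. It yields that each defining open set of $\mathcal U_m$ is dense, so $\mathcal U_m$ is residual.

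Finally, since $X$ is a Fr\'echet, hence Baire, space, $\mathcal Z\cap\mathcal U_m$ is residual and in particular nonempty; any nonzero element of it is a $(d,1)$-distributionally irregular vector, completing the reduction of the first paragraph. I would stress that the simultaneity over $j\in{\mathbb N}_N$ (the ``$\cap$'' in $(I_{0,\cap})$ and $(I_{\infty,\cap})$) is threaded through the whole argument only via the $\max_j$ and $\min_j$ above and the use of index sets common to all components, which is precisely the extra information these hypotheses encode relative to the single-operator case of \cite{2013JFA}; consequently the same scheme reproves the statement verbatim for the other types of $(d,i)$-distributionally irregular vectors by replacing $\cap$ with the appropriate $\forall$/$\exists$/$\cup$ pattern.
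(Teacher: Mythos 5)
Your overall architecture coincides with the paper's: both run the Baire-category scheme of \cite[Propositions 7 and 9]{2013JFA} with the quantifier $(\forall j\in{\mathbb N}_{N})$ inserted, obtain a residual $G_{\delta}$ of type-$1$ distributionally $m$-unbounded vectors and a residual $G_{\delta}$ of type-$1$ distributionally near-to-$0$ vectors (the latter containing $X_{0}$, hence dense), intersect them in the Baire space $X$, and pass from one irregular vector to an uncountable scrambled set via its span. The treatment of $\mathcal Z$, the reduction in your first paragraph, and the Banach-space case via the renorming $p_{n}^{Y}(y)=n\|y\|_{Y}$ are all consistent with what the paper does.

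The gap is exactly where you say you expect it: the density of $\mathcal U_{m}$, and the mechanism you sketch for it does not close. Perturbing by a single large multiple $x_{0}+\rho y_{l}$ forces $\rho\epsilon > R+\max_{j,\,k\le N_{l}}p_{m}^{Y}(T_{j,k}x_{0})$, and, as you yourself observe, that maximum can grow without bound in $l$ while $y_{l}\to 0$ at an unrelated rate; no ``diagonal choice of $l$ and $\rho$'' repairs this, because the two requirements on $\rho$ (large enough to beat the orbit of $x_{0}$ up to time $N_{l}$, small enough that $\rho y_{l}$ stays in the prescribed neighbourhood) are incompatible in general. The device that actually resolves it --- and the one the paper imports from the proof of \cite[Proposition 7]{2013JFA} through the sets $A'$ and $B_{s}'$ and the points $u_{s}$ --- is a pigeonhole over several equally spaced perturbations $u_{s}=x_{0}+scu$, $s=0,1,\dots,L-1$, with $c$ chosen so that $p_{m}^{Y}(T_{j,k}(u_{s}-u_{s'}))=|s-s'|\,c\,p_{m}^{Y}(T_{j,k}u)$ exceeds $2l$ whenever $k$ lies in the good set $A'$ of $u$ and $s\neq s'$. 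Then for each such $k$ and each $j$ at most one index $s$ can have $p_{m}^{Y}(T_{j,k}u_{s})\le l$, so $\sum_{s}\mathrm{card}(B_{s}'\cap A')\le N\,\mathrm{card}(A')$ and some $u_{s}$ belongs to $M_{l}$ while staying within the prescribed neighbourhood of $x_{0}$; no bound on $\max_{k\le N_{l}}p_{m}^{Y}(T_{j,k}x_{0})$ is ever needed. Note also the extra factor $N$ in this count (one bad $s$ per component $j$), which is the only genuinely new feature of the disjoint case and must be absorbed by taking $L$ of order $Nl^{2}$ rather than $l$; your sketch, relying on a single amplitude, does not account for it.
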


\begin{proof}
We will only outline the most relevant details of the proof. In the case that $Y$ is a
Fr\' echet space, the theorem can be simply deduced by
slightly modifying the arguments given in the proofs of \cite[Propositions 7 and 9]{2013JFA};
if $Y$ is a Banach space, then the required statement follows from the above
by endowing $Y$ with the following
increasing family of seminorms $p_{n}^{Y}(y):=n\|y\|_{Y}$ ($n\in {\mathbb N},$
$y\in Y$), which turns the space $Y$ into a linearly and topologically homeomorphic
Fr\' echet space. Concerning the above-mentioned propositions from \cite{2013JFA}, the following should be noted. First of all, for each natural number $l\in {\mathbb N}$ we set
$$
M_{l}:=\Biggl\{ x\in X: (\exists n\in {\mathbb N})\, (\forall j\in {\mathbb N}_{N})\, \frac{\bigl| \{ k\in {\mathbb N} : p_{m}^{Y}(T_{j,k}x)>l \} \cap [1,n] \bigr|}{n}\geq 1-\frac{1}{l}\Biggr\}.
$$
Then, clearly, $M_{l}$ is an open set for all $l\in {\mathbb N}$. Let $l\in {\mathbb N},$ $x\in X,$ $m_{1}\in {\mathbb N}$ and $\delta>0$. Then there exists $u\in \{y_{1},y_{2},\cdot \cdot \cdot \}$ and $n\in {\mathbb N}$ such that $p_{m_{1}}(u)<\delta \epsilon/l^{2}:=c$ and
$|\{k\in [1,n] \cap {\mathbb N} : (\forall j\in {\mathbb N}_{N}) \, p_{m}^{Y}(T_{j,k}u)>\epsilon \}|\geq n(1-\frac{1}{l}).$
Define $u_{s}:=x+ \delta s u/l^{2}c$ for $s=0,1,\cdot \cdot \cdot, l-1.$
If we replace the sets $A$ and $B_{s}$ throughout the proof of \cite[Proposition 7]{2013JFA} with the sets
$
A':=\{k\in [1,n] \cap {\mathbb N} : (\forall j\in {\mathbb N}_{N})\, p_{m}^{Y}(T_{j,k}x)>\epsilon \}
$
and $B_{s}':=\{k\in [1,n] \cap {\mathbb N} : (\exists j\in {\mathbb N}_{N})\, p_{m}^{Y}(T_{j,k}u_{s})\leq l/2\}$ ($s=0,1,\cdot \cdot \cdot, l-1$), then we can show that the set $M_{l}$ is dense. Hence, ${\mathcal M}:=\bigcap_{l\in {\mathbb N}}M_{l}$ is a residual set and each element of
${\mathcal M}$ is a $(d,1)$-distributionally $m$-unbounded vector for the sequence $((T_{j,k})_{k\in {\mathbb N}})_{1\leq j\leq N}$. Concerning \cite[Proposition 9]{2013JFA}, it is only worth noting that
\begin{align*}
 X_{0}\subseteq & M_{l,m}
\\ := & \Biggl\{ x\in X :  (\exists n\in {\mathbb N})\, (\forall j\in {\mathbb N}_{N}) \, \frac{\bigl| \{ k\in {\mathbb N} : p_{m}^{Y}(T_{j,k}x)<\frac{1}{l} \} \cap [1,n] \bigr|}{n}\geq 1-\frac{1}{l}\Biggr\}
\end{align*}
for all $l,\ m\in {\mathbb N}$ as well as that the set  $M_{l,m}$ is an open and dense subset of $X$ for all $l,\ m\in {\mathbb N},$ so that the set $\bigcap_{l,m\in {\mathbb N}}M_{l,m}$ is residual. On the other hand, this set consists exactly of $(d,1)$-distibutionally near to $0$ vectors for the sequence $((T_{j,k})_{k\in {\mathbb N}})_{1\leq j\leq N}$.  
\end{proof}

For single-valued linear operators, we use the following trick from the theory of $C$-regularized semigroups. Suppose that the condition (P) holds, where:
\begin{itemize}
\item[(P)] $T_{j} : D(T)\subseteq X \rightarrow X$ is a linear mapping, $C\in L(X)$ is an injective
mapping, as well as
$R(C)\subseteq D_{\infty}(T_{j}),$ $T_{j}^{k}C\in L(X)$ ($k\in {\mathbb N},$ $j\in {\mathbb N}_{N}$) and $T_{j,k} : R(C) \rightarrow X$ is defined by
$T_{j,k}(Cx):=T_{j}^{k}Cx,$ $x\in X,$ $k\in {\mathbb N},$ $j\in {\mathbb N}_{N}.$ 
\end{itemize}
Then,
for every $k\in {\mathbb N}$ and $j\in {\mathbb N}_{N},$ the mapping $T_{j,k} : R(C) \rightarrow X$ is an element of the space $L([R(C)],X)$. By Theorem \ref{sequences}, 
we immediately obtain that the following theorem holds good:

\begin{cor}\label{cea1}
Suppose that the condition \emph{(P)} holds, as well as that the following two conditions hold:
\begin{itemize}
\item[$(L_{0},\cap):$]
there exists a dense linear subspace
$X_{0}$ of $X$ satisfying that for each
$x\in X_{0}$ there exists a set $A_{x}\subseteq {\mathbb N}$
such that $\overline{dens}(A_{x})=1$ and $\lim_{k\in
A_{x}}T_{j}^{k}Cx=0$ ($j\in {\mathbb N}_{N}$).
\item[$(L_{\infty},\cap):$]
there exist a sequence $(z_{l})$ in $X,$ a
number $\epsilon>0,$ a strictly increasing sequence $(N_{l})$ in
${\mathbb N}$ and an integer $m\in {\mathbb N}$ such that, for every $l\in {\mathbb N},$ we have
$$
card\Bigl( \Bigl\{1\leq k\leq N_{l} : (\forall j\in {\mathbb N}_{N})\, p_{m}^{Y}\bigl(T_{j}^{k}Cz_{l}\bigr)
>\epsilon \Bigr\}\Bigr)\geq N_{l}\bigl(1-l^{-1}\bigr),
$$
(for every $l\in {\mathbb N},$
$
card\bigl(\{1\leq k\leq N_{l} : (\forall j\in {\mathbb N}_{N})\, \|T_{j}^{k}Cz_{l} \|_{Y}
>\epsilon\}\bigr)\geq N_{l}(1-l^{-1}),
$
in the case that $Y$ is a Banach space).
\end{itemize}
Then there exists a $(d,1)$-distributionally irregular vector $x\in R(C)$ for the operators $T_{1},\cdot \cdot \cdot, T_{N}.$
In particular, $T_{1},\cdot \cdot \cdot, T_{N}$ are $(d,1)$-distributionally chaotic and $\sigma$-scrambled set $S$ of $T_{1},\cdot \cdot \cdot, T_{N}$
can be chosen to be a linear submanifold of $R(C).$
\end{cor}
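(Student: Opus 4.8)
The plan is to apply Theorem \ref{sequences} with the Fr\'echet space $[R(C)]$ playing the role of the source space $X$ and with $X$ itself playing the role of the target $Y$. The decisive structural fact is that, since $C$ is injective and $p_{n}^{C}(Cx)=p_{n}(C^{-1}Cx)=p_{n}(x)$ for all $n\in{\mathbb N}$ and $x\in X$, the operator $C : X \to [R(C)]$ is a seminorm-preserving linear bijection, hence a linear topological homeomorphism onto $[R(C)]$. Under condition (P) we already know $T_{j,k}\in L([R(C)],X)$ for all $k\in{\mathbb N}$ and $j\in{\mathbb N}_{N}$, so the tuple $((T_{j,k})_{k\in{\mathbb N}})_{1\leq j\leq N}$ is exactly of the type to which Theorem \ref{sequences} applies.

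Next I would check that $(L_{0},\cap)$ and $(L_{\infty},\cap)$ are precisely the hypotheses $(I_{0,\cap})$ and $(I_{\infty,\cap})$ for this tuple. For the first, put $X_{0}':=C(X_{0})$; this is dense in $[R(C)]$ because $X_{0}$ is dense in $X$ and $C$ is a homeomorphism onto $[R(C)]$. For $Cx\in X_{0}'$ with $x\in X_{0}$ we have $T_{j,k}(Cx)=T_{j}^{k}Cx$, so $\lim_{k\in A_{x}}T_{j,k}(Cx)=0$ for every $j\in{\mathbb N}_{N}$ by $(L_{0},\cap)$, and $(I_{0,\cap})$ holds. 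For the second, take the sequence $(Cz_{l})_{l\in{\mathbb N}}$ in $[R(C)]$; since $p_{n}^{C}(Cz_{l})=p_{n}(z_{l})$, this is a zero sequence in $[R(C)]$ exactly because $(z_{l})$ is a zero sequence in $X$. As $T_{j,k}(Cz_{l})=T_{j}^{k}Cz_{l}$, the cardinality estimate in $(I_{\infty,\cap})$ (with $m$ and $\epsilon$ inherited, and with the target seminorm being $p_{m}$ of $X$, i.e. $\|\cdot\|$ in the Banach case) is literally the estimate assumed in $(L_{\infty},\cap)$.

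Theorem \ref{sequences} then yields a $(d,1)$-distributionally irregular vector $v$ for $((T_{j,k})_{k\in{\mathbb N}})_{1\leq j\leq N}$, and by construction $v\in[R(C)]=R(C)$. To transfer this back to the operators $T_{1},\cdot\cdot\cdot,T_{N}$, write $v=Cx$ and note that $T_{j,k}v=T_{j}^{k}Cx=T_{j}^{k}v$, which is legitimate since $v\in R(C)\subseteq D_{\infty}(T_{j})$. Hence the density conditions witnessing that $v$ is $d$-distributionally near to $0$ of type $1$ and $d$-distributionally unbounded of type $1$ for the regularized tuple are, term by term, the same conditions for the tuple $((T_{j}^{k})_{k\in{\mathbb N}})_{1\leq j\leq N}$; thus $v$ is a $(d,1)$-distributionally irregular vector for $T_{1},\cdot\cdot\cdot,T_{N}$. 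Finally, item d3 above turns $\mathrm{span}\{v\}\subseteq R(C)$ into a uniformly $(d,1)$-distributionally irregular manifold, so $T_{1},\cdot\cdot\cdot,T_{N}$ are $(d,1)$-distributionally chaotic with a $\sigma$-scrambled set contained in $R(C)$. I expect the only real obstacle to be the careful verification that the two criteria match after the change of source space --- in particular confirming that $(Cz_{l})$ is a genuine zero sequence in $[R(C)]$ (which rests on $(z_{l})$ tending to $0$ in $X$) and that the transfer identity $T_{j,k}v=T_{j}^{k}v$ holds on all of $R(C)$; everything else is the bookkeeping already carried out in the proof of Theorem \ref{sequences}.
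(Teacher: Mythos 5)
Your proposal is correct and follows exactly the route the paper takes: the paper derives Corollary \ref{cea1} directly from Theorem \ref{sequences} by viewing $T_{j,k}=T_{j}^{k}C$ as elements of $L([R(C)],X)$, and your verification that $(L_{0},\cap)$ and $(L_{\infty},\cap)$ translate into $(I_{0,\cap})$ and $(I_{\infty,\cap})$ via the seminorm-preserving bijection $C:X\rightarrow [R(C)]$, together with the transfer identity $T_{j,k}v=T_{j}^{k}v$ on $R(C)$, is precisely the bookkeeping the paper leaves implicit. The one caveat you rightly flag is that $(Cz_{l})$ must be a zero sequence in $[R(C)]$, which requires $(z_{l})\rightarrow 0$ in $X$; the stated $(L_{\infty},\cap)$ omits the word ``zero'', but this is evidently a typographical slip, since every parallel condition ($(I_{\infty,\cap})$, $(I_{\infty,\cup})$, etc.) in the paper demands it.
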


Using the first part of \cite[Theorem 3.7]{mendoza}, Proposition \ref{tuple-profo}, the proofs of Theorem \ref{sequences} and \cite[Propositions 7, 9]{2013JFA},
we can simply reformulate Theorem \ref{sequences} (Theorem \ref{cea1}) for any other type of $(d,i)$-distributional chaos introduced in Definition \ref{DC-unbounded-fric-DISJOINT}, by replacing optionally 
the condition $(I_{0,\cap})$ ($(L_{0,\cap})$) with one of the following conditions:
\begin{itemize}
\item[$(I_{0,\cup}):$]$=I_{0,\cap};$
\item[$(I_{0,\forall}):$] for every $j\in {\mathbb N}_{N}$ there exist a dense linear subspace
$X_{0}$ of $X$ and a set $A_{j}\subseteq {\mathbb N}$ such that
$\overline{dens}(A_{j})=1$ and
$\lim_{k\in A_{j},k\rightarrow \infty}T_{j,k}x=0;$
\item[$(I_{0,\exists}):$] there exist an integer $j\in {\mathbb N}_{N},$ a dense linear subspace
$X_{0}$ of $X$ and a set $A_{j}\subseteq {\mathbb N}$ such that
$\overline{dens}(A_{j})=1$ and
$\lim_{k\in A_{j},k\rightarrow \infty}T_{j,k}x=0;$
\item[$(L_{0},\cup):$]$=L_{0,\cap};$
\item[$(L_{0,\cup}),$ $(L_{0,\forall})$ and $(L_{0,\exists}):$] the same as $(I_{0,\cup}),$ $(I_{0,\forall})$ and $(I_{0,\exists})$, with $T_{j,k}=T_{j}^{k}C,$
\end{itemize}
and the condition $(I_{\infty,\cap})$ ($(L_{\infty,\cap})$) with one of the following conditions: 
\begin{itemize}
\item[$(I_{\infty,\cup}):$] there exist a zero sequence $(y_{l})$ in $X,$ a
number $\epsilon>0,$ a strictly increasing sequence $(N_{l})$ in
${\mathbb N}$ and an integer $m\in {\mathbb N}$ such that, for every $l\in {\mathbb N},$ we have  
$$
card\Bigl( \Bigl\{1\leq k\leq N_{l} :  \max_{1\leq j \leq N}d_{Y}\bigl(T_{j,k}y_{l},0\bigr)
>\epsilon \Bigr\}\Bigr)\geq N_{l}\bigl(1-l^{-1}\bigr),
$$
(for every $l\in {\mathbb N},$ 
$
card\bigl(\{1\leq k\leq N_{l} : \max_{1\leq j\leq N}\|T_{j,k}y_{l} \|_{Y}
>\epsilon\}\bigr)\geq N_{l}(1-l^{-1}),
$
in the case that $Y$ is a Banach space);
\item[$(I_{\infty,\forall}):$] for every $j\in {\mathbb N}_{N},$ there exist a zero sequence $(y_{l})$ in $X,$ a
number $\epsilon>0,$ a strictly increasing sequence $(N_{l})$ in
${\mathbb N}$ and an integer $m\in {\mathbb N}$ such that, for every $l\in {\mathbb N},$ we have  
\begin{align}\label{bogu-mi}
card\Bigl( \Bigl\{1\leq k\leq N_{l} :  d_{Y}\bigl(T_{j,k}y_{l},0\bigr)
>\epsilon \Bigr\}\Bigr)\geq N_{l}\bigl(1-l^{-1}\bigr),
\end{align}
(for every $l\in {\mathbb N},$ 
\begin{align}\label{bogu-miB}
card\bigl(\{1\leq k\leq N_{l} : \|T_{j,k}y_{l} \|_{Y}
>\epsilon\}\bigr)\geq N_{l}(1-l^{-1}),
\end{align}
in the case that $Y$ is a Banach space);
\item[$(I_{\infty,\exists}):$] there exist an integer $j\in {\mathbb N}_{N},$ a zero sequence $(y_{l})$ in $X,$ a
number $\epsilon>0,$ a strictly increasing sequence $(N_{l})$ in
${\mathbb N}$ and an integer $m\in {\mathbb N}$ such that, for every $l\in {\mathbb N},$ we have that \eqref{bogu-mi} holds (\eqref{bogu-miB} holds, in the case that $Y$ is a Banach space);
\item[$(L_{\infty,\cup}),$ $(L_{\infty,\forall})$ and $(L_{\infty,\exists}):$] the same as $(I_{\infty,\cup}),$ $(I_{\infty,\forall})$ and $(I_{\infty,\exists})$, with $T_{j,k}=T_{j}^{k}C.$
\end{itemize}

It is worth noting that a simple application of \cite[Proposition 8]{2013JFA} yields several equivalent conditions for the existence of a $d$-distributionally unbounded vector of type $3$ or $4$ for any sequence of linear continuous operators 
$T_{1}\in L(X),\cdot \cdot \cdot,T_{N}\in L(X)$ on a Banach space $X.$ In the present situation, we do not know how to reconsider the implication (i)' $\Rightarrow$ (i) of this statement for $d$-distributionally unbounded vectors of type $1$ or $2$ for orbits of linear continuous operators
on Banach spaces (all remaining parts of this proposition hold in our framework).

Concerning \cite[Theorem 12]{2013JFA}, it should be noted that, even in the case of considerations of orbits of linear continuous operators acting on the same Banach space $X,$ it is not possible to transfer the implication (iv) $\Rightarrow$ (i) for $(d,i)$-distributional chaos (the real problem is the validity of last equality in its proof for all $k\in {\mathbb N};$ this cannot be deduced for disjointness). But,
the proof of implication (iv) $\Rightarrow$ (iii) in \cite[Theorem 15]{2013JFA}, and the process of `renorming' described in the proof of second part of \cite[Theorem 3.7]{kerry-drew}, can be repeated almost literally in order to see that the following sufficient criterion for dense $(d,1)$-distributional chaos of linear continuous operators holds true (observe that the equations \cite[Theorem 15, (2)-(3)]{2013JFA} and the inequality preceding them hold uniformly on $j\in {\mathbb N}_{N},$ with the sequence $T^{i}$ replaced therein by
$T_{i,j}$):

\begin{thm}\label{mmnn}
Suppose that $X$ is separable, $X_{0}$ is a dense linear subspace of
$X,$ as well as $(T_{j,k})_{k\in {\mathbb N}}$ is a sequence in $L(X,Y)$ ($j\in {\mathbb N}_{N}$)
and the following holds:
\begin{itemize}
\item[(a)] $\lim_{k\rightarrow \infty}T_{j,k}x=0,$ $x\in X_{0},$ $j\in {\mathbb N}_{N},$
\item[(b)] there exists a
$(d,1)$-distributionally unbounded vector $x$ for $((T_{j,k})_{k\in {\mathbb N}})_{1\leq j\leq N}.$
\end{itemize}
Then there exists a dense uniformly $(d,1)$-distributionally irregular manifold for the sequence $((T_{j,k})_{k\in {\mathbb N}})_{1\leq j\leq N},$
and particularly, $((T_{j,k})_{k\in {\mathbb N}})_{1\leq j\leq N}$
is densely $(d,1)$-distributionally chaotic.
\end{thm}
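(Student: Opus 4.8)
The plan is to follow the single-operator scheme behind the implication (iv) $\Rightarrow$ (iii) of \cite[Theorem 15]{2013JFA}, exploiting that ${\mathbb N}_{N}$ is finite so that every ``$(\forall j)$'' requirement is a finite intersection and all estimates are uniform in $j\in {\mathbb N}_{N}$. Fix once and for all the integer $m\in {\mathbb N}$ for which hypothesis (b) furnishes a $(d,1)$-distributionally $m$-unbounded vector $x_{0}$, witnessed by a set $B$ with $\overline{dens}(B)=1$ and $\lim_{k\in B}p_{m}^{Y}(T_{j,k}x_{0})=\infty$ for every $j$. The first goal is to promote (a) and (b) into the hypotheses of Theorem \ref{sequences}: rescaling $x_{0}$ to $y_{l}:=x_{0}/c_{l}$ with $c_{l}\to\infty$ produces a zero sequence, and since along $B$ the finitely many sequences $p_{m}^{Y}(T_{j,k}x_{0})$ all tend to $\infty$, the exceptional set $\{k\in B : p_{m}^{Y}(T_{j,k}x_{0})\le \epsilon c_{l}\}$ is cofinite in $B$ for each fixed $l$, so for suitable $N_{l}$ the set $\{1\le k\le N_{l} : (\forall j)\, p_{m}^{Y}(T_{j,k}y_{l})>\epsilon\}$ has cardinality at least $N_{l}(1-l^{-1})$. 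This is exactly condition $(I_{\infty,\cap})$, while (a) is $(I_{0,\cap})$.

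With these in hand I would rerun the Baire-category argument from the proof of Theorem \ref{sequences} with the single fixed $m$. The open sets $M_{l}$ defined there are dense via the $u_{s}$-perturbation construction (approximate a given point by an element of $X_{0}$ using (a), then perturb by a small multiple of the unbounded direction along a density-$(1-l^{-1})$ initial segment of $B$), so $\bigcap_{l}M_{l}$ is a dense $G_{\delta}$ of $(d,1)$-distributionally $m$-unbounded vectors; simultaneously, $X_{0}\subseteq M_{l,m'}$ for all $l,m'$ forces $\bigcap_{l,m'}M_{l,m'}$ to be a dense $G_{\delta}$ of $(d,1)$-distributionally near to $0$ vectors. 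Intersecting, the set $I$ of uniformly $(d,1)$-distributionally irregular vectors (with this one $m$) is residual, hence dense; in particular at least one such vector exists, and by item d3 its span is already a uniformly irregular manifold, though not yet a dense one.

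The remaining, and hardest, step is dense lineability: producing a dense linear manifold all of whose non-zero vectors lie in $I$. One checks first that $I$ is stable under the obvious operations, $\lambda I=I$ for $\lambda\neq0$ and $I+X_{0}\subseteq I$ (adding an $X_{0}$-vector, whose orbit tends to $0$, destroys neither the density-$1$ near-to-$0$ set nor the density-$1$ $m$-unbounded set). This already yields a dense subspace $X_{0}+{\mathbb K}x_{0}$ whose only non-irregular non-zero vectors lie on the cancellation slice $X_{0}\setminus\{0\}$, which shows exactly why a bare coset argument is insufficient. To repair this I would follow the renorming construction from the proof of the second part of \cite[Theorem 3.7]{kerry-drew}: enumerate a dense sequence $(x_{p})$ in the separable space $X$ and build inductively vectors $w_{p}\in I$ with $d(w_{p},x_{p})<2^{-p}$ (forcing $\overline{\operatorname{span}}\{w_{p}\}=X$) whose $m$-unbounded behaviour is concentrated on blocks of indices $k$ on which the images $T_{j,k}w_{1},\dots,T_{j,k}w_{p-1}$ are already uniformly small. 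Arranging a single common density-$1$ set on which all $T_{j,k}w_{p}$ converge to $0$, together with a nested system of ``dominant'' blocks, one ensures that for every non-zero finite combination $\sum_{p}c_{p}w_{p}$ the highest-index term with $c_{p}\neq0$ governs the behaviour, so the combination is near to $0$ on the common set and $m$-unbounded on a density-$1$ set. Then $X':=\operatorname{span}\{w_{p}\}$ is a dense uniformly $(d,1)$-distributionally irregular manifold, and by Definition \ref{DC-unbounded-fric-DISJOINT} (taking $S=X'$, whose pairwise differences are non-zero, hence irregular) the tuple is densely $(d,1)$-distributionally chaotic.

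I expect the main obstacle to be precisely this last construction: because upper density is not closed under finite intersection, the near-to-$0$ witnesses and the $m$-unbounded witnesses of the individual $w_{p}$ cannot simply be intersected. The inductive block construction — choosing each new $w_{p}$ so that its unbounded block is pushed far enough out that the previously built vectors are negligible there, while all near-to-$0$ behaviour is synchronized on one common density-$1$ set — is exactly the device (inherited uniformly in $j$ from \cite[Theorem 15]{2013JFA} and the renorming of \cite[Theorem 3.7]{kerry-drew}) that circumvents this difficulty, and verifying that every non-zero combination inherits both density-$1$ properties with the same $m$ is the delicate bookkeeping at the heart of the proof.
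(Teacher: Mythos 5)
Your proposal is correct and follows essentially the same route as the paper, which proves Theorem \ref{mmnn} precisely by repeating the proof of the implication (iv) $\Rightarrow$ (iii) of \cite[Theorem 15]{2013JFA} together with the renorming process from the second part of \cite[Theorem 3.7]{kerry-drew}, observing that the key inequalities \cite[Theorem 15, (2)--(3)]{2013JFA} hold uniformly in $j\in{\mathbb N}_{N}$ because the index set is finite. Your preliminary detour through Theorem \ref{sequences} (verifying $(I_{0,\cap})$ and $(I_{\infty,\cap})$ to get a residual set of irregular vectors) is not part of the paper's argument but is harmless, and your account of the inductive block construction for dense lineability is exactly the device the paper appeals to.
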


Applying the same argumentation as in the proof of Theorem \ref{cea1} above, Theorem \ref{mmnn} implies the following:

\begin{cor}\label{mmn}
Suppose that the condition \emph{(P)} holds, $X$ is separable, $X_{0}$ is a dense linear subspace of
$X,$ as well as $T_{j}$ is a closed linear operator on $X$ ($j\in {\mathbb N}_{N}$)
and the following holds:
\begin{itemize}
\item[(a)] $\lim_{k\rightarrow \infty}T_{j}^{k}Cx=0,$ $x\in X_{0},$ $j\in {\mathbb N}_{N},$
\item[(b)] there exist $x\in X,$ $m\in {\mathbb N}$ and a set $B\subseteq {\mathbb N}$ such that
$\overline{dens}(B)=1,$ and $\lim_{k\rightarrow \infty ,k\in B}p_{m}(T_{j}^{k}Cx)=\infty,$ $j\in {\mathbb N}_{N},$ resp.
$\lim_{k\rightarrow \infty ,k\in B}\|T_{j}^{k}Cx\|=\infty,$ $j\in {\mathbb N}_{N},$ if $X$ is a Banach space.
\end{itemize}
Then there exists a uniformly $(d,1)$-distributionally irregular manifold
$W$ for the operators $T_{1},\cdot \cdot \cdot, T_{N},$ and particularly, $T_{1},\cdot \cdot \cdot, T_{N}$ are $(d,1)$-distributionally
chaotic. Furthermore, if $R(C)$ is dense in $X,$ then $W$ can be
chosen to be dense in $X$ and $T_{1},\cdot \cdot \cdot, T_{N}$ are densely $(d,1)$-distributionally
chaotic.
\end{cor}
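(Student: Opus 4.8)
The plan is to deduce Corollary \ref{mmn} from Theorem \ref{mmnn} by means of exactly the $C$-regularization device already used in the proof of Corollary \ref{cea1}. Concretely, I would apply Theorem \ref{mmnn} not to $X$ itself but to the Fr\'echet space $[R(C)]$ in the role of the (separable) pivot space, while keeping $X$ as the codomain, and to the sequence $((T_{j,k})_{k\in {\mathbb N}})_{1\leq j\leq N}$ furnished by condition (P), where $T_{j,k}:[R(C)]\rightarrow X$ is given by $T_{j,k}(Cx)=T_{j}^{k}Cx$. As recorded just before Corollary \ref{cea1}, each such $T_{j,k}$ belongs to $L([R(C)],X)$, so the hypotheses of Theorem \ref{mmnn} are meaningful in this relabelled setting.

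First I would check the standing hypotheses of Theorem \ref{mmnn}. Since $p_{n}^{C}(Cx)=p_{n}(x)$, the operator $C$ is a linear homeomorphism of $X$ onto $[R(C)]$; hence $[R(C)]$ inherits separability from $X$, and $C(X_{0})$ is a dense linear subspace of $[R(C)]$. Hypothesis (a) of Theorem \ref{mmnn} holds on $C(X_{0})$ because, for $x\in X_{0}$, we have $T_{j,k}(Cx)=T_{j}^{k}Cx\rightarrow 0$ by hypothesis (a) of the corollary. Hypothesis (b) of Theorem \ref{mmnn} is witnessed by the vector $Cx\in R(C)$ coming from hypothesis (b): as $T_{j,k}(Cx)=T_{j}^{k}Cx$, and a single set $B$ with $\overline{dens}(B)=1$ together with a single $m$ serve \emph{all} $j\in {\mathbb N}_{N}$, the vector $Cx$ is precisely a $(d,1)$-distributionally unbounded vector (type $1$, via $\cap$) for $((T_{j,k})_{k\in {\mathbb N}})_{1\leq j\leq N}$.

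Then I would transfer the conclusion back to the original operators. Theorem \ref{mmnn} supplies a dense uniformly $(d,1)$-distributionally irregular manifold $W$ for $((T_{j,k})_{k\in {\mathbb N}})_{1\leq j\leq N}$ inside $[R(C)]$, and it also yields directly that this sequence is densely $(d,1)$-distributionally chaotic. Since $W\subseteq R(C)$ and $T_{j,k}y=T_{j}^{k}y$ for every $y\in R(C)$, and since both the nearness to $0$ and the $m$-unboundedness are measured by the metric $d$ and the seminorm $p_{m}$ of the codomain $X$, the manifold $W$ is simultaneously a uniformly $(d,1)$-distributionally irregular manifold for $T_{1},\cdots,T_{N}$; its nonzero elements are $(d,1)$-distributionally irregular vectors for $T_{1},\cdots,T_{N}$, so $W$ is a $(d,\sigma,1)$-scrambled set (with $\sigma=2^{-m}$) and the operators are $(d,1)$-distributionally chaotic.

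The main obstacle, and the point that deserves care, is the final density clause: Theorem \ref{mmnn} only produces $W$ dense in $[R(C)]$, i.e. in the finer topology generated by the seminorms $p_{n}^{C}$. Here I would invoke the continuity of $C$, which gives, for each $n$, an estimate $p_{n}(y)\leq M\,p_{n'}^{C}(y)$ valid for $y\in R(C)$; consequently convergence in $[R(C)]$ implies convergence in the topology of $X$, so $W$ is dense in $R(C)$ for the $X$-topology, and if moreover $R(C)$ is dense in $X$ then $W$ is dense in $X$. This yields the dense $(d,1)$-distributional chaos of $T_{1},\cdots,T_{N}$, as claimed.
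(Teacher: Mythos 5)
Your proposal is correct and follows essentially the same route as the paper: the paper proves Corollary \ref{mmn} by "applying the same argumentation as in the proof of Corollary \ref{cea1}," i.e.\ precisely the $C$-regularization device of condition (P), viewing $T_{j,k}=T_{j}^{k}C\cdot C^{-1}$ as elements of $L([R(C)],X)$ and feeding them into Theorem \ref{mmnn}. Your verification of separability of $[R(C)]$, of hypotheses (a) and (b) in the relabelled setting, and of the continuity of the inclusion $[R(C)]\hookrightarrow X$ for the final density clause supplies exactly the details the paper leaves implicit.
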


It is worth noting that Theorem \ref{mmnn} and Corollary \ref{mmn} can be straightforwardly 
reformulated for $(d,9)$-distributional
chaos by using Proposition \ref{subspace-fric-DISJOINT}
and the second parts of \cite[Theorem 3.7, Corollary 3.12]{mendoza}. And, more to the point, for some other types od disjoint distributional chaos introduced above, the condition
(a) in the formulation of Theorem \ref{mmnn} can be replaced with the existence of an integer $j\in {\mathbb N}_{N}$ and a dense linear subspace $X_{0}$ of $X$ such that $\lim_{k\rightarrow \infty}T_{j,k}x=0,$ $x\in X_{0},$ 
and
the condition (b) in its formulation can be replaced with the condition
\begin{itemize}
\item[(b)'] there exist an integer $m\in {\mathbb N}$ and a
$d$-distributionally $m$-unbounded vector $x$ of type $2$ or $4$ for $((T_{j,k})_{k\in {\mathbb N}})_{1\leq j\leq N},$
\end{itemize}
producing the clear results.
This follows from a careful inspection of the proof of \cite[Theorem 15]{2013JFA}, by observing that it is always possible to construct an increasing sequence of natural numbers $(n_{k})_{k\in {\mathbb N}}$ and a sequence $(x_{k})_{k\in {\mathbb N}}$ in $X$ such that $p_{k}(x_{k})\leq 1$ for all $k\in {\mathbb N}$ and the inequalities \cite[(2)-(3)]{2013JFA} hold true. From the practical point of view, the consideration of dense $(d,1)$-distributional chaos in Theorem \ref{mmnn} and Corollary \ref{mmn} is of crucial importance; for example,
by using Corollary \ref{mmn} and \cite[Theorem 13]{kerry-drew}, we are in a position to present a great number of unbounded differential operators that are densely $(d,1)$-distributionally chaotic (cf. also \cite[Example 3.8, Example 3.9, Example 3.10]{kerry-drew}):

\begin{example}\label{qaz}
Suppose that $X$ is separable, ${\mathbb K}={\mathbb C},$ $A$ is a closed densely defined operator on $X,$ $z_{0}\in {\mathbb C} \setminus
\{0\},$ $\beta \geq -1,$ $d\in (0,1],$ $m\in (0,1),$ $\varepsilon
\in (0,1],$ $\gamma >-1,$ $B_{d}:=\{z\in {\mathbb C} : |z|\leq d\}$ and the following two conditions hold:
\begin{itemize}
\item[$(\S)$] $P_{z_{0},\beta, \varepsilon, m}:=e^{i\arg(z_{0})}\bigl(|z_{0}|+(P_{\beta, \varepsilon, m} \cup B_{d})\bigr) \subseteq \rho (A),$
$(\varepsilon,m(1+\varepsilon)^{-\beta}) \in \partial B_{d},$
\item[($\S \S$)] the family $\{(1+|\lambda|)^{-\gamma}(\lambda-A)^{-1} : \lambda \in P_{z_{0},\beta, \varepsilon, m}
\}$ is equicontinuous,
\end{itemize}
where $\rho(A)$ stands for the resolvent set of $A.$ 
Let $b\in
(0,1/2)$ be fixed, $\delta_{b}:=\arctan(\cos \pi b)$ and
$A_{0}:=e^{-i\arg(z_{0})}A-|z_{0}|.$ Define, for every $z\in
\Sigma_{\delta_{b}},$
\begin{equation}\label{frac}
T_{b}(z)x:=\frac{1}{2\pi i}\int
\limits_{\Gamma}e^{-z(-\lambda)^{b}}\bigl(\lambda -
A_{0}\bigr)^{-1}x\, d\lambda,\ x\in X,
\end{equation}
with the contour $\Gamma$ being defined in the proof of  \cite[Theorem 13]{kerry-drew}.
Then $T_{b}(z)\in L(X)$ is injective and has dense range in $X$ for
any $z\in \Sigma_{\delta_{b}},$ $T_{b}(z)A\subseteq
AT_{b}(z),$ $z\in \Sigma_{\delta_{b}}$ and $A^{n}T_{b}(z)\in L(X)$
($n\in {\mathbb N},$ $z\in \Sigma_{\delta_{b}}$). Furthermore, the existence of a dense subset $X_{0}$
of $X$ and a number $\lambda \in \sigma_{p}(A)$ such that
$\lim_{k\rightarrow \infty}A^{k}x=0,$ $x\in X_{0}$ and
$|\lambda|>1,$ yields that the operator
$zA^{n}$ is densely distributionally chaotic for any $n\in {\mathbb
N}$ and $z\in {\mathbb C}$ with $|z|=1.$ Suppose now that
$|z_{1}|=\cdot \cdot \cdot =|z_{N}|=1$ and $n_{1},\cdot \cdot \cdot,n_{N}\in {\mathbb N}.$
By Corollary \ref{mmn}, we get that the operators $z_{1}A^{n_{1}},\cdot \cdot \cdot, z_{N}A^{n_{N}}$ are
densely $(d,1)$-distributionally chaotic. For example, let $a,\ b,\ c>0,$ $c<\frac{b^{2}}{2a}<1,$ $X:=L^{2}([0,\infty))$ and
$$
\Lambda :=\Biggl\{ \lambda \in {\mathbb C} :
\Biggl|\lambda-\Biggl(c-\frac{b^{2}}{4a}\Biggr)\Biggr|\leq \frac{b^{2}}{4a},\
\Im \lambda \neq 0 \mbox{ if }\Re \lambda \leq c-\frac{b^{2}}{4a}
\Biggr\}.
$$
Define the operator $-B$ by $D(-B):=\{f\in
W^{2,2}([0,\infty)) : f(0)=0\}$ and $-Bu:=au_{xx}+bu_{x}+cu,$ $u\in
D(B)$ (\cite{fund}). Let $P(z)=\sum_{j=0}^{n}a_{j}z^{j}$ be a non-constant
complex polynomial such that $a_{n}>0$ and
$
P(-\Lambda) \ \cap \ S_{1} \neq
\emptyset.
$
Then the above requirements hold with $A=P(B).$
\end{example}

Without any doubt, the most commonly used condition ensuring the validity of Corollary \ref{mmn}(b) is that one in which the operator $(T_{1},\cdot \cdot \cdot, T_{N})$ has an eigenvalue $\lambda \in {\mathbb K}$ such that $|\lambda|>1,$
so that \cite[Theorem 16(II)(a), Corollary 17]{2013JFA} has a straightforward extension for $(d,1)$-distributional chaos. 
Possible applications can be given to the tuples of continuous linear operators acting on the Fr\' echet space $H({\mathbb C}^{n}),$ equipped with the usual topology of uniform 
convergence on compact sets, as well:

\begin{example}\label{lupa}
As it is well known, J. Godefroy and J. H. Shapiro has proved in \cite[Theorem 5.1]{godefroy} that any linear continuous operator $L\in L(H({\mathbb C}^{n}))$ that commutes with all translations and is not a scalar multiple of the identity is hypercyclic (now it is also known that $L$ is frequently hypercyclic, chaotic and densely distributionally chaotic, as well; see \cite{erdper} for the notion and \cite{2013JFA}). If this is the case, there exists a non-constant entire function $\Phi $ on $ {\mathbb C}^{n}$ of exponential type such that $L=\Phi(D);$ see \cite[Proposition 5.2]{godefroy} for more details. Using Theorem \ref{mmnn} and the proof of 
above mentioned theorem, it can be simply deduced that any tuple $\Phi_{1}(D),\cdot \cdot \cdot, \, \Phi_{N}(D)$ of such operators is densely $(d,1)$-distributionally chaotic, provided that $\Phi_{j}$ satisfies the properties described above ($1\leq j\leq N$) and there exists a number $\alpha \in {\mathbb C}^{n}$ such that $\Phi_{i}(\alpha)=\Phi_{j}(\alpha)$ and $|\Phi_{i}(\alpha)|>1$ for $1\leq i,\ j\leq N.$ Further analysis of growth rate of entire functions that are $(d,1)$-distributionally irregular for the differentiation operators $\Phi_{1}(D),\cdot \cdot \cdot, \, \Phi_{N}(D)$ is without scope of this paper; see L. Bernal-Gonz\' alez, A. Bonilla \cite{gonzalez} for more details about this subject.
\end{example}

Concerning the validity of conditions from Theorem \ref{mmnn}(b) and Corollary \ref{mmn}(b), we want first to note that the
following slight modification of \cite[Theorem 3]{milervrs}, discovered by V. M\"uller and J. Vr\v sovsk\'y,
holds for sequences of operators acting between possibly different Banach (Hilbert) spaces. The proof is almost the same and relies on the use of K. Ball's planck theorems
\cite[Theorems 1 and 2]{milervrs} (cf. also \cite[Proposition 9.1(a)]{meise} and \cite[Theorem 4.12]{miler-kyoto}):

\begin{prop}\label{orbits-rija}
Suppose that $X$ and $Y$ are Banach spaces, and $T_{k}\in L(X,Y)$ for each $k\in {\mathbb N}.$ If either
\begin{itemize}
\item[(i)] $\sum \limits^{\infty}_{k=1}\frac{1}{\|T_{k}\|}<\infty$
\end{itemize}
or
\begin{itemize}
\item[(ii)] $X$ is a complex Hilbert space and $\sum \limits^{\infty}_{k=1}\frac{1}{\|T_{k}\|^{2}}<\infty ,$
\end{itemize}
then there exists $x\in X$ such that $\lim_{k\rightarrow \infty}\|T_{k}x\|_{Y}=\infty .$
\end{prop}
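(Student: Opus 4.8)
The plan is to reduce the statement to Ball's plank theorems applied in the space $X$ itself, following the scheme of \cite[Theorem 3]{milervrs}. First I would note that both hypotheses force $\|T_{k}\|\rightarrow \infty$ (hence $T_{k}\neq 0$ for every $k$), since $1/\|T_{k}\|\rightarrow 0.$ For each $k\in {\mathbb N}$ I would choose a unit vector $x_{k}\in X$ with $\|T_{k}x_{k}\|_{Y}\geq \|T_{k}\|/2$ and a norming functional $\psi_{k}\in Y^{*},$ $\|\psi_{k}\|=1,$ with $\psi_{k}(T_{k}x_{k})=\|T_{k}x_{k}\|_{Y}$ (Hahn--Banach). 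Setting $g_{k}:=T_{k}^{*}\psi_{k}\in X^{*}$ gives the two basic estimates $|g_{k}(x)|=|\psi_{k}(T_{k}x)|\leq \|T_{k}x\|_{Y}$ for all $x\in X,$ and $\|g_{k}\|\geq |g_{k}(x_{k})|=\|T_{k}x_{k}\|_{Y}\geq \|T_{k}\|/2.$ Writing $f_{k}:=g_{k}/\|g_{k}\|$ for the normalized functional, this yields the crucial lower bound
\[
\|T_{k}x\|_{Y}\geq |g_{k}(x)|=\|g_{k}\|\,\bigl|f_{k}(x)\bigr|\geq \frac{\|T_{k}\|}{2}\,\bigl|f_{k}(x)\bigr|,\quad x\in X,\ k\in {\mathbb N},
\]
so that it suffices to produce a single $x$ for which $\|T_{k}\|\,|f_{k}(x)|\rightarrow \infty.$

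For case (i) I would invoke the real plank theorem \cite[Theorem 1]{milervrs} in its dual form: for norm-one functionals $(f_{k})$ on $X$ and weights $t_{k}\geq 0$ with $\sum_{k}t_{k}\leq 1,$ there is a vector $x,$ $\|x\|\leq 1,$ with $|f_{k}(x)|\geq t_{k}$ for all $k.$ Fed the functionals $f_{k}$ above, the bound becomes $\|T_{k}x\|_{Y}\geq (\|T_{k}\|/2)t_{k},$ so all I need are weights with $\sum_{k}t_{k}\leq 1$ and $\|T_{k}\|\,t_{k}\rightarrow \infty.$ I would take $t_{k}=c\,a_{k}/\|T_{k}\|$ and choose $a_{k}\rightarrow \infty$ slowly enough that $\sum_{k}a_{k}/\|T_{k}\|<\infty;$ rescaling by the constant $c$ so that $\sum_{k}t_{k}\leq 1$ then gives $\|T_{k}x\|_{Y}\geq (c/2)a_{k}\rightarrow \infty.$

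The main obstacle, and the only point where summability (rather than mere divergence of $\|T_{k}\|$) is used, is manufacturing such weights $a_{k}.$ This rests on the elementary fact that for any convergent series of positive terms $\sum_{k}b_{k}$ (here $b_{k}=1/\|T_{k}\|$) one can pick $a_{k}\rightarrow \infty$ with $\sum_{k}a_{k}b_{k}<\infty:$ taking $a_{k}=1/\sqrt{r_{k}}$ with the tails $r_{k}=\sum_{i\geq k}b_{i}\downarrow 0,$ the telescoping estimate $b_{k}/\sqrt{r_{k}}\leq 2(\sqrt{r_{k}}-\sqrt{r_{k+1}})$ yields $\sum_{k}a_{k}b_{k}\leq 2\sqrt{r_{1}}<\infty,$ while $a_{k}\rightarrow \infty.$ Once this lemma is in place the rest of the argument is routine.

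For case (ii), where $X$ is a complex Hilbert space, I would identify each $f_{k}$ with a unit vector $v_{k}\in X$ by Riesz representation and apply the complex plank theorem \cite[Theorem 2]{milervrs}: for unit vectors $(v_{k})$ and weights $t_{k}\geq 0$ with $\sum_{k}t_{k}^{2}\leq 1,$ there is a unit vector $x$ with $|\langle x,v_{k}\rangle|\geq t_{k}$ for all $k.$ The argument is then identical, the weight construction now being applied to the convergent series $\sum_{k}1/\|T_{k}\|^{2};$ choosing $t_{k}^{2}=c\,a_{k}/\|T_{k}\|^{2}$ with $a_{k}\rightarrow \infty$ and $\sum_{k}a_{k}/\|T_{k}\|^{2}<\infty$ forces $\|T_{k}\|\,t_{k}=\sqrt{c\,a_{k}}\rightarrow \infty,$ and the same lower bound gives $\|T_{k}x\|_{Y}\rightarrow \infty.$ Throughout, only the structure of $X$ (its dual space, or its inner product) enters the plank theorem, while the norm is measured in $Y;$ hence the possible discrepancy between $X$ and $Y$ causes no difficulty, which is exactly the point of this slight generalization of \cite[Theorem 3]{milervrs}.
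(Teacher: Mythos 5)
Your argument is correct and is exactly the proof the paper has in mind: the paper only says that the statement follows as in \cite[Theorem 3]{milervrs} via Ball's plank theorems, and your reduction through the functionals $g_{k}=T_{k}^{*}\psi_{k}$ (which is how one accommodates the target space $Y\neq X$), together with the tail-square-root weight construction $a_{k}=1/\sqrt{r_{k}}$, is precisely the M\"uller--Vr\v sovsk\'y scheme. No gaps; the only cosmetic point is that in case (i) for complex scalars one applies the real plank theorem to $\RE f_{k}$ and uses $|f_{k}(x)|\geq|\RE f_{k}(x)|$, which is standard.
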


The above proposition is clearly applicable to closed linear operators $T$ for which the following single-valued analogue of condition (P) holds:

\begin{itemize}
\item[(P$_{0}$)] $T: D(T)\subseteq X \rightarrow X$ is a linear mapping, $C\in L(X)$ is an injective
mapping, as well as
$R(C)\subseteq D_{\infty}(T),$ $T^{k}C\in L(X)$ ($k\in {\mathbb N}$) and $T_{k} : R(C) \rightarrow X$ is defined by
$T_{k}(Cx):=T^{k}Cx,$ $x\in X,$ $k\in {\mathbb N}.$ 
\end{itemize}

More precisely, we have the following

\begin{cor}\label{ora-rija}
Suppose that \emph{(P$_{0}$)} and exactly one of the following two conditions hold:
\begin{itemize}
\item[(i)] $X$ is a Banach space and $\sum \limits_{k=1}^{\infty}\frac{1}{\|T^{k}C\|}<\infty ,$
\end{itemize}
or
\begin{itemize}
\item[(ii)] $X$ is a complex Hilbert space and $\sum \limits_{k=1}^{\infty}\frac{1}{\|T^{k}C\|^{2}}<\infty .$
\end{itemize}
Then there exists $x\in X$ such that
$\lim_{k\rightarrow \infty }\|T^{k}Cx\|=\infty .$ 
\end{cor}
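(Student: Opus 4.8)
The plan is to deduce this directly from Proposition \ref{orbits-rija}, applied to the sequence of bounded operators furnished by condition (P$_{0}$). Concretely, I would set $Y:=X$ and consider, for each $k\in {\mathbb N}$, the operator $S_{k}:=T^{k}C$. By (P$_{0}$) we have $R(C)\subseteq D_{\infty}(T)$ and $T^{k}C\in L(X)$ for every $k$, so each $S_{k}$ is a genuine element of $L(X,Y)=L(X)$ whose operator norm $\|S_{k}\|=\|T^{k}C\|$ is finite and is exactly the quantity appearing in the summability hypotheses. Thus the whole content of the statement is the observation that, although $T$ itself is only a (possibly unbounded) closed operator, the regularized iterates $T^{k}C$ are bounded operators on all of $X$, which is precisely the setting in which Proposition \ref{orbits-rija} operates.

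First, in case (i), $X$ is a Banach space and by assumption $\sum_{k=1}^{\infty}\|S_{k}\|^{-1}=\sum_{k=1}^{\infty}\|T^{k}C\|^{-1}<\infty$; this is verbatim hypothesis (i) of Proposition \ref{orbits-rija} for the sequence $(S_{k})_{k\in {\mathbb N}}$ with target space $Y=X$. Hence the proposition yields a vector $x\in X$ with $\lim_{k\rightarrow\infty}\|S_{k}x\|_{Y}=\lim_{k\rightarrow\infty}\|T^{k}Cx\|=\infty$, which is the desired conclusion. Second, in case (ii), $X$ is a complex Hilbert space and $\sum_{k=1}^{\infty}\|S_{k}\|^{-2}=\sum_{k=1}^{\infty}\|T^{k}C\|^{-2}<\infty$, which is exactly hypothesis (ii) of Proposition \ref{orbits-rija}; invoking that part of the proposition again produces $x\in X$ with $\lim_{k\rightarrow\infty}\|T^{k}Cx\|=\infty$, completing the argument.

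I do not expect any genuine obstacle here, and indeed the paper already signals this by writing ``The above proposition is clearly applicable'' just before the statement. Once the operators $T^{k}C$ are recognized as elements of $L(X)$ (so that the machinery behind Proposition \ref{orbits-rija}, resting on K. Ball's plank theorems, applies unchanged with $Y=X$), the result is an immediate corollary requiring no new estimates. The only point deserving a word of care is that the conclusion of Proposition \ref{orbits-rija} is phrased for a general target space $Y$; but the identification $Y=X$ makes $\|S_{k}x\|_{Y}=\|T^{k}Cx\|$ a literal equality, so no further reconciliation of norms is needed.
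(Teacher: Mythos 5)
Your proposal is correct and coincides with the paper's own (essentially tacit) argument: the paper states Corollary \ref{ora-rija} immediately after remarking that Proposition \ref{orbits-rija} is ``clearly applicable'' under (P$_{0}$), i.e.\ one simply applies that proposition to the bounded operators $T^{k}C\in L(X)$ with $Y=X$, exactly as you do. No further comment is needed.
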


It is clear that Proposition \ref{orbits-rija} and Corollary \ref{ora-rija} can be directly applied for proving certain results about the existence of $d$-distributionally unbounded vectors of type $2$, $3$ or $4$ for tuples of closed linear operators. For example, the existence of a $d$-distributionally unbounded vector of type $2$ (see also Proposition \ref{tuple-profo}) for a sequence $((T_{j,k})_{k\in {\mathbb N}})_{j\in {\mathbb N}_{N}}$ in $L(X,Y)$ (the sequence $((T_{j,k})_{k\in {\mathbb N}})_{j\in {\mathbb N}_{N}}$of linear unbounded operators in $X$ satisfying the condition (P) with an injective operator $C\in L(X)$) follows by replacing the conditions (i) and (ii) in Proposition \ref{orbits-rija} (Corollary \ref{ora-rija}) with
$\sum^{\infty}_{k=1}\frac{1}{\sup_{1\leq j\leq N}\|T_{j,k}\|}<\infty$
and $\sum^{\infty}_{k=1}\frac{1}{\sup_{1\leq j\leq N}\|T_{j,k}\|^{2}}<\infty $
($\sum_{k=1}^{\infty}\frac{1}{\sup_{1\leq j\leq N}\|T_{j}^{k}C\|}<\infty $ and
$\sum_{k=1}^{\infty}\frac{1}{\sup_{1\leq j\leq N}\|T_{j}^{k}C\|^{2}}<\infty $).
Concerning $d$-distributionally unbounded vectors of type $1$, the situation is similar but not so straightforward as above:
 
\begin{prop}\label{owq-prcko}
Suppose that $X$ and $Y$ are Banach spaces, and $T_{j,k}\in L(X,Y)$ for each $k\in {\mathbb N}$ and $j\in {\mathbb N}_{N}.$ If either
\begin{itemize}
\item[(i)] for each $j\in {\mathbb N}_{N}$ we have $\sum \limits^{\infty}_{k=1}\frac{1}{\|T_{j,k}\|}<\infty$
\end{itemize}
or
\begin{itemize}
\item[(ii)] $X$ is a complex Hilbert space and for each $j\in {\mathbb N}_{N}$ we have $\sum \limits^{\infty}_{k=1}\frac{1}{\|T_{j,k}\|^{2}}<\infty ,$
\end{itemize}
then there exists $x\in X$ such that $\lim_{k\rightarrow \infty}\|T_{j,k}x\|_{Y}=\infty $ for each $j\in {\mathbb N}_{N}.$
\end{prop}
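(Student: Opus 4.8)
The plan is to deduce this $N$-component statement from the single-sequence Proposition \ref{orbits-rija} by \emph{interleaving} the $N$ orbits into one sequence. The point to appreciate at the outset --- and the reason the situation is ``not so straightforward'' --- is that applying Proposition \ref{orbits-rija} separately to each sequence $(T_{j,k})_{k\in{\mathbb N}}$ would only produce $N$ vectors $x_{1},\dots,x_{N}$, one per index $j$, with no mechanism forcing them to coincide; here we need a \emph{single} vector $x$ along which all $N$ orbits blow up. Securing such a common vector is the only real obstacle, and the interleaving device below is precisely what removes it.

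First I would define a single sequence $(S_{m})_{m\in{\mathbb N}}$ in $L(X,Y)$ by writing each $m\in{\mathbb N}$ uniquely as $m=(k-1)N+j$ with $k\in{\mathbb N}$ and $j\in{\mathbb N}_{N}$, and setting $S_{m}:=T_{j,k}$. Under hypothesis (i) each series $\sum_{k=1}^{\infty}\|T_{j,k}\|^{-1}$ converges --- in particular no $T_{j,k}$ can vanish, so every $\|S_{m}\|>0$ --- and, all terms being nonnegative, a reordering gives
\[
\sum_{m=1}^{\infty}\frac{1}{\|S_{m}\|}=\sum_{j=1}^{N}\sum_{k=1}^{\infty}\frac{1}{\|T_{j,k}\|}<\infty,
\]
so $(S_{m})_{m\in{\mathbb N}}$ satisfies condition (i) of Proposition \ref{orbits-rija}. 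In the Hilbert-space case (ii) the identical reindexing yields $\sum_{m=1}^{\infty}\|S_{m}\|^{-2}=\sum_{j=1}^{N}\sum_{k=1}^{\infty}\|T_{j,k}\|^{-2}<\infty$, so $(S_{m})_{m\in{\mathbb N}}$ satisfies condition (ii).

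Next I would apply Proposition \ref{orbits-rija} to $(S_{m})_{m\in{\mathbb N}}$ to obtain a vector $x\in X$ with $\lim_{m\rightarrow\infty}\|S_{m}x\|_{Y}=\infty$. To conclude, fix $j\in{\mathbb N}_{N}$: as $k\rightarrow\infty$ the index $m_{k}:=(k-1)N+j$ tends to $\infty$, whence $\|T_{j,k}x\|_{Y}=\|S_{m_{k}}x\|_{Y}\rightarrow\infty$, since every subsequence of a sequence tending to $\infty$ also tends to $\infty$. As $j\in{\mathbb N}_{N}$ was arbitrary, the same $x$ works for all components, which is exactly the assertion.

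Beyond the conceptual obstacle already flagged, nothing here is delicate: the passage of the summability hypotheses to $(S_{m})_{m\in{\mathbb N}}$ is a routine reordering of a finite family of nonnegative convergent series, and the transfer of the divergence back to each fixed component is immediate. A Baire-category variant is conceivable --- one would intersect the $N$ sets of blow-up vectors coming from the separate components --- but that route presupposes these sets are residual, a strengthening not recorded in the stated form of Proposition \ref{orbits-rija}; I therefore prefer the interleaving argument, which invokes Proposition \ref{orbits-rija} verbatim.
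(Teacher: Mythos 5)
Your proposal is correct and follows essentially the same route as the paper: both interleave the $N$ sequences into a single sequence indexed by $m=(k-1)N+j$, apply Proposition \ref{orbits-rija} to that combined sequence to obtain one common vector $x$, and then recover the conclusion for each fixed $j$ by restricting to the subsequence $m_{k}=(k-1)N+j$. Your reindexing is, if anything, stated a bit more cleanly than the paper's.
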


\begin{proof}
We will prove only (i).
Denote, for every natural number $k,$ by $a(k,n)$ the remainder after dividing $k$ by $N.$ Put ${\mathbf T}_{k}:=T_{a(k,N),\lceil k/N\rceil}$ for all $k\in {\mathbb N}.$ Then 
$$
\sum \limits^{\infty}_{k=1}\frac{1}{\|{\mathbf T}_{k}\|}=\sum \limits_{j=1}^{N}\sum \limits^{\infty}_{k=1}\frac{1}{\|T_{j,k}\|}<\infty,
$$
so that there exists $x\in X$ such that $\lim_{k\rightarrow \infty}\|{\mathbf T}_{k}x\|_{Y}=\infty.$ Since for each $j\in {\mathbb N}_{N}$ and $k\in {\mathbb N}$ one has $T_{j,k}x={\mathbf T}_{j+(k-1)N}x,$ we immediately get from the above that
$\lim_{k\rightarrow \infty}\|T_{j,k}x\|_{Y}=\infty ,$ as claimed.
\end{proof}

\begin{cor}\label{ora-rija-3}
Suppose that \emph{(P)} and exactly one of the following two conditions hold:
\begin{itemize}
\item[(i)] $X$ is a Banach space and for each $j\in {\mathbb N}_{N}$ we have $\sum \limits_{k=1}^{\infty}\frac{1}{\|T_{j}^{k}C\|}<\infty ,$
\end{itemize}
or
\begin{itemize}
\item[(ii)] $X$ is a complex Hilbert space and for each $j\in {\mathbb N}_{N}$ we have $\sum \limits_{k=1}^{\infty}\frac{1}{\|T_{j}^{k}C\|^{2}}<\infty .$
\end{itemize}
Then there exists $x\in X$ such that
$\lim_{k\rightarrow \infty }\|T_{j}^{k}Cx\|=\infty $ for each $j\in {\mathbb N}_{N}.$ 
\end{cor}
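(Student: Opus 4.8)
The plan is to reduce the assertion immediately to Proposition \ref{owq-prcko}, in exactly the same way that Corollary \ref{ora-rija} is deduced from Proposition \ref{orbits-rija} and Corollary \ref{cea1} from Theorem \ref{sequences}. First I would set $T_{j,k}:=T_{j}^{k}C$ for all $k\in {\mathbb N}$ and $j\in {\mathbb N}_{N}.$ The whole point of condition (P) is that, although the operators $T_{j}$ are in general unbounded, each composition $T_{j}^{k}C$ is everywhere defined and bounded, i.e.\ $T_{j}^{k}C\in L(X);$ thus the $T_{j,k}$ are genuine elements of $L(X,X)$ to which the Banach-space, respectively the complex Hilbert-space, statement of Proposition \ref{owq-prcko} applies (here the target space is $Y=X$).

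Next I would observe that the quantities occurring in the hypotheses of the present corollary are literally the operator norms of these $T_{j,k}:$ the series $\sum_{k=1}^{\infty}\|T_{j}^{k}C\|^{-1}$ appearing in (i) equals $\sum_{k=1}^{\infty}\|T_{j,k}\|^{-1},$ and likewise $\sum_{k=1}^{\infty}\|T_{j}^{k}C\|^{-2}$ in (ii) equals $\sum_{k=1}^{\infty}\|T_{j,k}\|^{-2}.$ Hence, if case (i) is in force (so that $X$ is a Banach space and, for each $j\in {\mathbb N}_{N},$ the first of these series converges), then hypothesis (i) of Proposition \ref{owq-prcko} is satisfied with the family $(T_{j,k})$; if instead case (ii) is in force (so that $X$ is a complex Hilbert space and, for each $j\in {\mathbb N}_{N},$ the second series converges), then hypothesis (ii) of Proposition \ref{owq-prcko} is satisfied. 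In either case Proposition \ref{owq-prcko} furnishes a single vector $x\in X$ such that $\lim_{k\rightarrow \infty}\|T_{j,k}x\|=\infty$ for every $j\in {\mathbb N}_{N}.$

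Finally, since by construction $T_{j,k}x=T_{j}^{k}Cx,$ this last limit is precisely $\lim_{k\rightarrow \infty}\|T_{j}^{k}Cx\|=\infty$ for each $j\in {\mathbb N}_{N},$ which is the desired conclusion. I expect no genuine obstacle here: the only thing that has to be verified is the boundedness and everywhere-definedness of $T_{j}^{k}C$ guaranteed by (P), after which the proof is a one-line invocation of Proposition \ref{owq-prcko}, mirroring exactly the deduction of Corollary \ref{ora-rija} from Proposition \ref{orbits-rija}.
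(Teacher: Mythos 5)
Your reduction is correct and is precisely the argument the paper intends: Corollary \ref{ora-rija-3} is stated immediately after Proposition \ref{owq-prcko} as a direct consequence, obtained by setting $T_{j,k}:=T_{j}^{k}C\in L(X)$ (which condition (P) guarantees) and observing that the summability hypotheses on $\|T_{j}^{k}C\|$ are exactly the hypotheses of that proposition for the family $(T_{j,k})$. There is no gap, and your approach mirrors the paper's deduction of Corollary \ref{ora-rija} from Proposition \ref{orbits-rija}.
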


It is clear that Corollary \ref{mmn} in combination with Corollary \ref{ora-rija-3} (Corollary \ref{ora-rija} in combination with \cite[Corollary 3.12]{mendoza})
can be applied in the analysis of $(d,1)$-distributionally chaotic properties (distributionally chaotic properties) of linear backward shift operators in Fr\' echet sequence spaces (see Subsection \ref{shifts} below for the case in which such shift operators are continuous). 
For the sake of simplicity, we will formulate only one result connecting Corollary \ref{mmn} and Corollary \ref{ora-rija-3}, for backward shift operators in the spaces $X:=l^{p},$ where $1\leq p<\infty,$ and $X:=c_{0}.$

\begin{thm}\label{rikardinjo}
Let $X:=l^{p}$ for some $1\leq p<\infty$ or $X:=c_{0}.$
Suppose that $(w_{j,n})_{n\in {\mathbb N}}$ is a sequence of positive reals and that the unilateral weighted backward shift $T_{j}$ is given by
\begin{align}
\notag
T_{j}& \bigl\langle x_{n}\bigr\rangle_{n\in {\mathbb N}}:=\bigl\langle w_{j,n}x_{n+1}\bigr\rangle_{n\in {\mathbb N}},\quad \bigl\langle x_{n}\bigr\rangle_{n\in {\mathbb N}}\in X,\mbox{ and }
\\\label{oladilo} & D\bigl(T_{j}\bigr):=\Bigl\{ \bigl\langle x_{n}\bigr\rangle_{n\in {\mathbb N}} \in X : T_{j}\bigl\langle x_{n}\bigr\rangle_{n\in {\mathbb N}}\in X\Bigr\} \quad \bigl( j\in {\mathbb N}_{N}\bigr).
\end{align}
Suppose, further, that there exists a bounded sequence $(a_{n})_{n\in {\mathbb N}}$ of positive reals such that for each $k\in {\mathbb N}$ and $j\in {\mathbb N}_{N}$ we have
$$
B_{j,k}:=\sup_{n\in {\mathbb N}}\Biggl[a_{k+n}\prod_{i=n}^{k+n-1}\omega_{j,i}\Biggr]<\infty.
$$ 
Then the following holds:
\begin{itemize}
\item[(i)] If $p\neq 2$ and $X=l^{p}$ or $X=c_{0},$ as well as for each $j\in {\mathbb N}_{N}$ we have $\sum \limits_{k=1}^{\infty}\frac{1}{B_{j,k}}<\infty ,$
\end{itemize}
or
\begin{itemize}
\item[(ii)] $X=l^{2}$ and for each $j\in {\mathbb N}_{N}$ we have $\sum \limits_{k=1}^{\infty}\frac{1}{B_{j,k}^{2}}<\infty ,$
\end{itemize}
then the operators $T_{1},\cdot \cdot \cdot,T_{N}$ are densely $(d,1)$-distributionally chaotic.
\end{thm}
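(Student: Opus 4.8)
The plan is to realize $T_1,\dots,T_N$ as a $C$-regularized tuple and then invoke Corollary \ref{mmn}, using Corollary \ref{ora-rija-3} to supply the required distributionally unbounded vector. The natural choice of regularizing operator is the diagonal multiplier $C\langle x_n\rangle_{n\in\mathbb{N}}:=\langle a_n x_n\rangle_{n\in\mathbb{N}}$ induced by the bounded weight sequence $(a_n)$. Since $(a_n)$ is bounded and strictly positive, $C\in L(X)$ is injective, and its range $R(C)$ contains every finitely supported sequence, hence is dense in $X$; moreover $X=\ell^p$ ($1\le p<\infty$) and $X=c_0$ are separable Banach spaces, so the ambient hypotheses of Corollary \ref{mmn} are in force.

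First I would record the elementary identity $T_j^k\langle y_n\rangle_{n\in\mathbb{N}}=\langle\bigl(\prod_{i=n}^{n+k-1}w_{j,i}\bigr)y_{n+k}\rangle_{n\in\mathbb{N}}$, so that $T_j^kC$ is the weighted backward shift $\langle x_n\rangle\mapsto\langle\beta_{j,k,n}x_{n+k}\rangle$ with $\beta_{j,k,n}=a_{n+k}\prod_{i=n}^{n+k-1}w_{j,i}$. The standard formula for the norm of a weighted shift on $\ell^p$ and on $c_0$ then gives $\|T_j^kC\|=\sup_{n\in\mathbb{N}}\beta_{j,k,n}=B_{j,k}<\infty$. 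This is precisely what verifies condition (P): the maximal-domain weighted backward shift $T_j$ is closed, each $T_j^kC$ belongs to $L(X)$, and consequently $R(C)\subseteq D_{\infty}(T_j)$ for every $j\in\mathbb{N}_N$.

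It remains to check the two hypotheses of Corollary \ref{mmn}. For (a) I would take $X_0$ to be the dense subspace of finitely supported sequences: if $x\in X_0$ is supported in $\{1,\dots,M\}$, then $T_j^kCx=0$ for all $k\ge M$ and all $j$, so $\lim_{k\to\infty}T_j^kCx=0$ trivially. For (b) I would apply Corollary \ref{ora-rija-3}, whose hypotheses now translate verbatim: in Case (i) the assumption $\sum_k B_{j,k}^{-1}<\infty$ is exactly $\sum_k\|T_j^kC\|^{-1}<\infty$ for the Banach space $X$, while in Case (ii) the assumption $\sum_k B_{j,k}^{-2}<\infty$ is exactly $\sum_k\|T_j^kC\|^{-2}<\infty$ for the (complex) Hilbert space $\ell^2$. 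In either case Corollary \ref{ora-rija-3} produces $x\in X$ with $\lim_{k\to\infty}\|T_j^kCx\|=\infty$ for every $j$; since the full limit diverges, condition (b) holds with $B=\mathbb{N}$, which has upper density $1$.

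With (P), (a) and (b) in hand, Corollary \ref{mmn} yields a uniformly $(d,1)$-distributionally irregular manifold $W$ for $T_1,\dots,T_N$, and because $R(C)$ is dense in $X$ its final clause upgrades this to a dense such manifold, whence $T_1,\dots,T_N$ are densely $(d,1)$-distributionally chaotic, as claimed. The only genuinely load-bearing step is the identification $\|T_j^kC\|=B_{j,k}$: this is what converts the two summability hypotheses of the theorem directly into the hypotheses of Corollary \ref{ora-rija-3}, and once that correspondence is established everything else is routine bookkeeping with the regularization machinery.
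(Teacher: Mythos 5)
Your proposal is correct and follows essentially the same route as the paper: the same regularizing multiplier $C\langle x_{n}\rangle:=\langle a_{n}x_{n}\rangle$, the same identification $\|T_{j}^{k}C\|=B_{j,k}$ verifying condition (P), Corollary \ref{ora-rija-3} for the unbounded orbit, and Corollary \ref{mmn} with $X_{0}$ the finitely supported sequences. Your write-up is in fact slightly more explicit than the paper's about why $R(C)$ is dense and why the full limit in Corollary \ref{ora-rija-3} suffices for hypothesis (b), but there is no substantive difference in the argument.
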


\begin{proof}
Set $C\langle x_{n}\rangle_{n\in {\mathbb N}}:=\langle a_{n} x_{n}\rangle_{n\in {\mathbb N}},$ $\langle x_{n}\rangle_{n\in {\mathbb N}} \in X.$
It is clear that $C\in L(X)$ is injective and has dense range, as well as that
$$
T_{j}^{k}C\langle x_{n}\rangle_{n\in {\mathbb N}}=\Biggl \langle a_{k+1}x_{k+1}\prod_{i=1}^{k}\omega_{j,i},a_{k+2}x_{k+2}\prod_{i=2}^{k+2}\omega_{j,i},\cdot \cdot \cdot,a_{k+n}x_{k+n}\prod_{i=n}^{k+n-1}\omega_{j,i},\cdot \cdot \cdot \Biggr \rangle,
$$
for any $\langle x_{n}\rangle_{n\in {\mathbb N}}\in X,$ $j\in {\mathbb N}_{N}$ and
$k\in {\mathbb N}.$ Hence, $T_{j}^{k}C\in L(X),$ $\|T_{j}^{k}C\|=B_{j,k}$ ($j\in {\mathbb N}_{N},$ 
$k\in {\mathbb N}$) and the condition (P) holds. 
An application of Corollary \ref{ora-rija-3} yields that there exists $x\in X$ such that
$\lim_{k\rightarrow \infty }\|T_{j}^{k}Cx\|=\infty $ for each $j\in {\mathbb N}_{N}.$ The proof of result follows now by applying Corollary \ref{mmn}, with $X_{0}$ being the linear subspace of $X$ containing sequences with only finite number of non-zero components.
\end{proof}

A concrete example of application is given as follows: 

\begin{example}\label{primena-shifts}
Let $X:=l^{p}$ for some $p\in [1,\infty)$ or $X:=c_{0}.$ For each $j\in {\mathbb N}_{N},$ we set
$\omega_{j,n}:=2^{j}n^{j}$ and $a_{n}:= (n-1)!^{-(N+1)}$ ($n\in {\mathbb N}$). Then $T_{j}^{k}C\langle x_{n}\rangle_{n\in {\mathbb N}}=\langle 2^{jk}(k+n-1)!^{j-(N+1)}(n-1)!^{j}x_{n+k}\rangle_{n\in {\mathbb N}},$ so that $T_{j}^{k}C\in L(X)$ for all $j\in {\mathbb N}_{N}$ and
$k\in {\mathbb N};$ hence, the condition (P) holds. Since $B_{j,k}\geq 2^{jk}k^{j-(N+1)}$ for all $j\in {\mathbb N}_{N}$ and
$k\in {\mathbb N},$ the operators $T_{1},\cdot \cdot \cdot,T_{N}$ are densely $(d,1)$-distributionally chaotic due to Theorem \ref{rikardinjo}.
\end{example}

There is no need to say that Theorem \ref{rikardinjo} is not universal and there exist a great number of concrete situations where it cannot be applied with any choice of regularizing operator $C$, possibly different from that one employed in Theorem \ref{rikardinjo} (see e.g. \cite[Example 4]{milervrs} and \cite[Example 5]{2013JFA}). 

\section{$(d,1)$-Distributional chaos for some special classes of operators}\label{profsor-klase}

In this section, we apply our results from previous sections to some special classes of operators on Fr\' echet and Banach spaces, like weighted backward shifts and weighted translations on locally compact groups. For the sake of brevity, we will consider only $(d,1)$-distributional chaos here.

\subsection{$(d,1)$-Distributional chaos for weighted backward shifts}\label{shifts}

Let $X$ be a Fr\' echet sequence space in which $(e_{n})_{n\in {\mathbb N}}$ is a basis (see e.g. \cite[Section 4.1]{erdper}). In this subsection, we will always assume that for each $j\in {\mathbb N}_{N}$ the unilateral weighted backward shift $T_{j}$ given by \eqref{oladilo}
is a continuous linear operator on $X.$ Since the finite linear combinations of vectors 
from the basic $(e_{n})_{n\in {\mathbb N}}$ form a dense submanifold of $X$, an application of Corollary \ref{mmn} immediately yields the following result closely connected with the assertion of \cite[Theorem 25]{2013JFA}:

\begin{prop}\label{guerrero}
Suppose that there exist $x\in X,$ $m\in {\mathbb N}$ and a set $B\subseteq {\mathbb N}$ such that
$\overline{dens}(B)=1$ and 
$\lim_{k\rightarrow \infty ,k\in B}\|T_{j}^{k}x\|_{m}=\infty,$ $j\in {\mathbb N}_{N}.$
Then there exists a dense uniformly $(d,1)$-distributionally irregular manifold
$W$ for the operators $T_{1},\cdot \cdot \cdot, T_{N},$ and particularly, $T_{1},\cdot \cdot \cdot, T_{N}$ are densely $(d,1)$-distributionally
chaotic. 
\end{prop}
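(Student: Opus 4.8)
The plan is to obtain this as an immediate specialization of Corollary \ref{mmn}, taking the regularizing operator to be the identity $C := I$ on $X$. First I would record that condition (P) holds trivially in this situation: since each $T_j$ is assumed to be continuous on $X$, we have $D(T_j)=X$, hence $R(C)=X\subseteq D_{\infty}(T_j)$ and $T_j^{k}C=T_j^{k}\in L(X)$ for all $k\in{\mathbb N}$ and $j\in{\mathbb N}_N$, with $T_{j,k}(Cx)=T_j^{k}x$. Moreover $X$ is separable because the finite linear combinations of the basis vectors $(e_n)_{n\in{\mathbb N}}$ are dense in $X$, and every continuous $T_j$ is in particular closed, so all the standing hypotheses of Corollary \ref{mmn} are satisfied.

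Next I would verify condition (a) of Corollary \ref{mmn} with $X_0$ taken to be the dense submanifold of all finite linear combinations of the $e_n$. The key elementary observation is that the weighted backward shift annihilates each basis vector after finitely many iterations: from \eqref{oladilo} one reads off $T_j e_n=w_{j,n-1}e_{n-1}$ for $n\geq 2$ and $T_j e_1=0$, whence $T_j^{k}e_n=0$ whenever $k\geq n$. Consequently, for any $x=\sum_{n=1}^{M}c_n e_n\in X_0$ one gets $T_j^{k}x=T_j^{k}Cx=0$ for all $k\geq M$ and all $j\in{\mathbb N}_N$; in particular $\lim_{k\rightarrow\infty}T_j^{k}Cx=0$, which is even stronger than what (a) requires, and it holds simultaneously for all $N$ shifts.

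Condition (b) of Corollary \ref{mmn} is then nothing but the hypothesis of the proposition: with $C=I$ the quantity $p_m(T_j^{k}Cx)$ coincides with $\|T_j^{k}x\|_m$, so the assumed existence of $x\in X$, $m\in{\mathbb N}$ and a set $B$ with $\overline{dens}(B)=1$ and $\lim_{k\in B,k\rightarrow\infty}\|T_j^{k}x\|_m=\infty$ for every $j\in{\mathbb N}_N$ is precisely (b). Applying Corollary \ref{mmn} now yields a uniformly $(d,1)$-distributionally irregular manifold $W$ for $T_1,\cdots,T_N$; and since $R(C)=R(I)=X$ is trivially dense in $X$, the ``furthermore'' clause of that corollary permits $W$ to be chosen dense in $X$, giving the asserted dense $(d,1)$-distributional chaos.

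Because the argument is a direct reduction to an already established result, there is no substantive analytic obstacle here. The only point that genuinely needs care is the bookkeeping in condition (a): one must confirm that the finitely supported vectors form a dense subspace on which \emph{every} orbit is eventually zero, so that (a) holds at once for all $N$ components. I would state this verification explicitly but would not belabour the density of the finitely supported vectors, which is built into the standing assumption that $(e_n)_{n\in{\mathbb N}}$ is a basis of $X$.
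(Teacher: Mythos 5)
Your proof is correct and takes essentially the same route as the paper: the paper's own one-line argument is precisely an application of Corollary \ref{mmn} with $X_{0}$ the dense submanifold of finite linear combinations of the basis vectors, and your explicit choice $C=I$ together with the observation that $T_{j}^{k}e_{n}=0$ for $k\geq n$ merely fills in the bookkeeping the paper leaves implicit. No gaps.
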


An illustrative example of application is given as follows:

\begin{example}\label{primerinjo}
Let $X:=l^{p}$ for some $1\leq p<\infty$ and let $\omega_{j,n}:= \omega_{j}>1$ ($n\in {\mathbb N},$ $j\in {\mathbb N}_{N}$). Utilizing Proposition \ref{guerrero}, it can be deduced that the operators $T_{1},\cdot \cdot \cdot, T_{N}$ are densely $(d,1)$-distributionally
chaotic. Speaking-matter-of-factly, a $(d,1)$-distributionally irregular vector of type $1$ for these operators is quite simple to construct here: take 
$x:=\langle n^{-\zeta}\rangle_{n\in {\mathbb N}}$ for some finite number $\zeta>1$ 
and $B:={\mathbb N}.$ Then for each $j\in {\mathbb N}_{N}$ we have
\begin{align*}
\bigl\| T_{j}^{k}x\bigr\|^{p} \geq \sum_{n=1}^{\infty}\frac{\omega_{j}^{kp}}{(n+1+k)^{\zeta p}}\geq \sum_{n=1}^{\infty}\frac{\omega_{j}^{kp}}{(3nk)^{\zeta p}}\geq 3^{-\zeta p}\sum_{n=1}^{\infty}n^{-\zeta p}\frac{\omega_{j}^{kp}}{k^{\zeta p}}\rightarrow +\infty,\quad k\rightarrow +\infty,
\end{align*}
so that $x$ is a $(d,1)$-distributionally irregular vector of type $1$ for $T_{1},\cdot \cdot \cdot, T_{N}.$
\end{example}

Before proceeding further, we would like to note that we have not been able to tackle the problem whether the condition that $T_{1},\cdot \cdot \cdot, T_{N}$ are densely $(d,1)$-distributionally
chaotic implies the existence of a vector $x\in X,$ a number $m\in {\mathbb N}$ and a set $B\subseteq {\mathbb N}$ with the above properties, so that Proposition \ref{guerrero} only partially generalizes \cite[Theorem 25]{2013JFA}. 

\begin{example}\label{bruk}
Let $X:=c_{0}.$
Suppose that the weight $(w_{1,n})_{n\in {\mathbb N}}$ is a sequence of positive reals such that $T_{1}$ and $T_{2}$ are continuous linear operators, where $T_{2}$ is the unilateral backward shift operator associated to the weight
$(\omega_{2,n}:=w_{1,n}^{-1})_{n\in {\mathbb N}}.$ Then there are no 
increasing sequence $(k_{n})$ in ${\mathbb N}$ and a vector $x=\langle x_{n}\rangle_{n\in {\mathbb N}}\in X$ such that $\lim_{n\rightarrow \infty}\|T_{1}^{k_{n}}x\|=\lim_{n\rightarrow \infty}\|T_{2}^{k_{n}}x\|=+\infty.$
Suppose to the contrary that $\lim_{n\rightarrow \infty}\|T_{1}^{k_{n}}x\|=\lim_{n\rightarrow \infty}\sup_{s\geq 1}\omega_{1,s}\omega_{1,s+1}\cdot \cdot \cdot \omega_{1,s+k_{n}-1}|x_{k_{n}+s}|=+\infty$ and 
$\lim_{n\rightarrow \infty}\|T_{1}^{k_{n}}x\|=\lim_{n\rightarrow \infty}\sup_{s\geq 1}\omega_{2,s}\omega_{2,s+1}\cdot \cdot \cdot \omega_{2,s+k_{n}-1}|x_{k_{n}+s}|=+\infty.$ For any $n,\ s\in {\mathbb N},$ we have $\omega_{1,s}\omega_{1,s+1}\cdot \cdot \cdot \omega_{1,s+k_{n}-1}\leq 1$ or $\omega_{2,s}\omega_{2,s+1}\cdot \cdot \cdot \omega_{2,s+k_{n}-1}\leq 1,$ which clearly implies by the previous assumption that $\limsup_{n\rightarrow \infty}|x_{n}|=\infty,$ which is a contradiction. Similarly, 
$T_{1}$ and $T_{2}$ cannot be $(d,1)$-distributionally chaotic. But, any of the operators $T_{1}$ and $T_{2}$ can be distributionally chaotic. To see this, we can argue as in Example \ref{sunce}:
let the sequence of weights $\omega_{1} :=\omega:=(\omega_{k})_{k\in {\mathbb N}}$ and
the sets $[ A_{n}^{1},A_{n}^{2}]$ ($[ B_{n}^{1},B_{n}^{2}]$)
be defined 
as in the above-mentioned example. Then the vector $x:=\langle 1/n \rangle_{n\in {\mathbb N}}$
is distributionally unbounded for both operators $T_{1}$ and $T_{2}$ because
for each $n\in {\mathbb N}$ with $n\geq n_{0}$ and $n\in [ A_{n}^{1},A_{n}^{2}]$ ($n\in [ B_{n}^{1},B_{n}^{2}]$) we have $\omega_{2,1}\omega_{2,2}\cdot \cdot \cdot \omega_{2,n}>2^{n}$ ($\omega_{1,1}\omega_{1,2}\cdot \cdot \cdot \omega_{1,n}>2^{n}$), so that
$\|T_{1}^{n}x\|\geq \omega_{1,1}\omega_{1,2}\cdot \cdot \cdot \omega_{1,n}x_{n+1}>2^{n}/(n+1), $ $n\in [ B_{n}^{1},B_{n}^{2}]$ and $\|T_{2}^{n}x\|\geq \omega_{2,1}\omega_{2,2}\cdot \cdot \cdot \omega_{2,n}x_{n+1}>2^{n}/(n+1), $ $n\in [ A_{n}^{1},A_{n}^{2}].$
\end{example}

Now we will state and prove the following proper generalization of \cite[Theorem 26]{2013JFA} for disjoint backward shift operators:

\begin{prop}\label{da-se-ohladi}
Suppose that the operator $T$ is given by \eqref{oladilo} with the weight $w_{j,n}\equiv 1$ ($j\in {\mathbb N}_{N},$ $n\in {\mathbb N}$). Let there exist an infinite set  $S$ of natural numbers such that
the series $\sum_{n\in S}e_{n}$ converges in $X,$ and let there exist natural numbers $r_{1},\cdot \cdot \cdot,r_{N}$ such that the upper density of set
$$
{\mathrm Q}:=\Bigl\{ k\in {\mathbb N} : \bigl(\forall j\in {\mathbb N}_{N}\bigr)\, r_{j}k\in S+1 \Bigr\}
$$
is equal to one. Then the operators $T^{r_{1}},\cdot \cdot \cdot, T^{r_{N}}$ are densely $(d,1)$-distributionally
chaotic. 
\end{prop}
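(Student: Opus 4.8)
The plan is to reduce everything to the verification of the single hypothesis of Proposition \ref{guerrero}, applied to the tuple $T_{1}:=T^{r_{1}},\cdots,T_{N}:=T^{r_{N}}$. Since $T$ is continuous and each $T^{r_{j}}$ is a power of $T$, every $T_{j}$ is a continuous linear operator on $X$; moreover $T_{j}^{k}=T^{r_{j}k}$, so that Proposition \ref{guerrero} becomes applicable once we exhibit a single vector $x\in X$, an index $m\in {\mathbb N}$ and a set $B\subseteq {\mathbb N}$ with $\overline{dens}(B)=1$ for which $\lim_{k\in B,\,k\to\infty}\|T^{r_{j}k}x\|_{m}=\infty$ holds simultaneously for every $j\in {\mathbb N}_{N}$. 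The ``near to $0$'' half of the distributional irregularity is then automatic, exactly as in the remark preceding Proposition \ref{guerrero}, because the finitely supported sequences form a dense subspace of $X$ on which every power $T^{r_{j}k}$ eventually vanishes.

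The natural candidate is $x:=\sum_{n\in S}e_{n}$, which lies in $X$ precisely by the convergence hypothesis, together with the choice $B:=Q$, whose upper density is one by assumption. First I would record the elementary identity $T^{r_{j}k}x=\sum_{n\in S,\,n>r_{j}k}e_{n-r_{j}k}$, obtained from $T^{p}e_{n}=e_{n-p}$ (with $e_{i}:=0$ for $i\le 0$) and the continuity of $T^{r_{j}k}$, which allows the operator to be applied termwise to the convergent series. The defining condition of $Q$, namely $r_{j}k\in S+1$ for every $j$, is then used to control which low-index basis vectors survive in $T^{r_{j}k}x$: for $k\in Q$ the shift by $r_{j}k$ moves the indices of $S$ (on which the $e_{n}$ have small seminorm, by convergence of the series) down onto the initial block of indices in a controlled fashion, and it does so for all $j$ at once because membership in $Q$ is a conjunction over $j\in {\mathbb N}_{N}$.

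The main obstacle is exactly this norm estimate: proving that $\|T^{r_{j}k}x\|_{m}\to\infty$ along $k\in Q$, uniformly in $j$. Here I would exploit the Fr\'echet sequence space structure — the unconditionality of the basis $(e_{n})$ together with the continuity of $T$, which bounds the seminorms of shifted basis vectors in terms of one another — in order to extract from $T^{r_{j}k}x$ an initial segment $\sum e_{i}$ whose $m$-th seminorm can be forced arbitrarily large as $k\to\infty$ in $Q$; the convergence of $\sum_{n\in S}e_{n}$ is what simultaneously keeps $x$ a genuine element of finite seminorm and yet permits this blow-up after shifting. Selecting $m$ and $B=Q$ from this analysis produces a vector that is $(d,1)$-distributionally $m$-unbounded of type $1$ for $((T^{r_{j}k})_{k\in {\mathbb N}})_{1\leq j\leq N}$.

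Finally, with the hypothesis of Proposition \ref{guerrero} verified, I would invoke that proposition as a black box: it yields a dense uniformly $(d,1)$-distributionally irregular manifold $W$ for $T^{r_{1}},\cdots,T^{r_{N}}$, and in particular these operators are densely $(d,1)$-distributionally chaotic, which is the desired conclusion. The only genuinely delicate ingredient is the uniform-in-$j$ seminorm blow-up along $Q$; once it is established, the remainder of the argument is purely an application of the machinery already assembled earlier in this section.
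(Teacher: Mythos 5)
There is a genuine gap, and it sits exactly where you flagged ``the main obstacle'': the vector $x=\sum_{n\in S}e_{n}$ is in general \emph{not} $(d,1)$-distributionally unbounded for $T^{r_{1}},\ldots,T^{r_{N}}$, so the hypothesis of Proposition \ref{guerrero} cannot be verified with this witness. Since $w_{j,n}\equiv 1$, one has $T^{p}x=\sum_{n\in S,\,n>p}e_{n-p}$, and in typical spaces where $\sum_{n\in S}e_{n}$ converges this orbit is \emph{bounded}, not divergent. For instance, in the K\"othe space $\ell^{1}(\langle 2^{-n}\rangle)$ with $S={\mathbb N}$ one computes $\|T^{p}x\|=\sum_{m\geq 1}2^{-m}=1$ for every $p$; more generally $\|T^{p}x\|\leq \|x\|\cdot\sup_{n}\|e_{n-p}\|/\|e_{n}\|$-type estimates keep the orbit of this particular vector under control even though the operator itself is densely distributionally chaotic. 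No appeal to unconditionality (which is not even assumed in this proposition --- the paper only assumes $(e_{n})$ is a basis) can rescue the seminorm blow-up for this explicit vector.

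The paper's proof avoids this entirely. It reduces via Theorem \ref{mmnn} to producing \emph{some} $(d,1)$-distributionally unbounded vector, and obtains one non-explicitly by verifying condition $(I_{\infty,\cap})$ of Theorem \ref{sequences}: it takes the \emph{zero sequence of tails} $y_{l}:=\sum_{n\in S,\,n\geq l}e_{n}$ (which tends to $0$ precisely because the series converges) and observes that for $k$ in the density-one set ${\mathrm Q}$ the image $T^{r_{j}k}y_{l}$ has first coordinate equal to $1$ simultaneously for all $j\in {\mathbb N}_{N}$, hence stays at distance $\geq\epsilon$ from $0$ --- a fixed lower bound, not a blow-up. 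The Baire-category machinery in the proof of Theorem \ref{sequences} (the residual sets $M_{l}$, built by adding large multiples of the small vectors $y_{l}$ to arbitrary points) then manufactures the distributionally unbounded vector. So the convergence of $\sum_{n\in S}e_{n}$ is used to make the $y_{l}$ small, and the set ${\mathrm Q}$ is used only to pin the first coordinate of the shifted tails; the divergence of seminorms happens for a generic vector produced by the criterion, not for $\sum_{n\in S}e_{n}$ itself. To repair your argument you would have to replace the explicit candidate by this two-step criterion (or by an explicit gliding-hump sum of rapidly rescaled tails), at which point you are reproducing the paper's proof.
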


\begin{proof}
By Theorem \ref{mmnn}, it suffices to show that there exist $x\in X,$ $m\in {\mathbb N}$ and a set $B\subseteq {\mathbb N}$ such that
$\overline{dens}(B)=1$ and 
$\lim_{k\rightarrow \infty ,k\in B}\|T^{r_{j}k}x\|=\infty,$ $j\in {\mathbb N}_{N}.$ For this, we will essentially apply Theorem \ref{sequences} in the following way: Set $y_{l}:=\sum_{n\in S,n\geq l}e_{n},$ $l\in {\mathbb N}.$ Then $\lim_{l\rightarrow \infty}y_{l}=0$ and there exists a number $\epsilon>0$ such that $d(\langle x_{n}\rangle_{n\in {\mathbb N}},0)<\epsilon$
implies $|x_{1}|<1.$ To verify that
$(I_{\infty,\cap})$ holds, it suffices to construct 
a strictly increasing sequence $(N_{l})$ in
${\mathbb N}$ such that, for every $l\in {\mathbb N},$ we have  
$$
card\Bigl( \Bigl\{1\leq k\leq N_{l} : (\forall j\in {\mathbb N}_{N})\, d\bigl(T^{r_{j}k}y_{l},0\bigr)
>\epsilon \Bigr\}\Bigr)\geq N_{l}\bigl(1-l^{-1}\bigr).
$$
But, this simply follows from our choice of number $\epsilon ,$ the equality $\overline{dens}({\mathrm Q})=1$ and the fact that for a number $l\in {\mathbb N}$ given in advance we have
$(T^{r_{j}})^{k}y_{l}=T^{r_{j}k}y_{l}=e_{1}+\cdot \cdot \cdot$ for every $j\in {\mathbb N}_{N}$ and $k\in {\mathbb N}$ 
such that $r_{j}k-1\in S \cap [l,\infty).$ 
\end{proof}

It is straightforward to apply Proposition \ref{da-se-ohladi} with $S={\mathbb N},$ for backward shift operators in K\"othe sequence spaces (\cite{gimenez-p}). For example, if $\langle a_{n}\rangle_{n\in {\mathbb N}}$
is a sequence of positive reals,
$X:=l^{p}(\langle a_{n}\rangle_{n\in {\mathbb N}}):=\{\langle x_{n}\rangle_{n\in {\mathbb N}}\in {\mathbb C}^{{\mathbb N}} : \|\langle x_{n}\rangle_{n\in {\mathbb N}}\|_{l^{p}(\langle a_{n}\rangle)}:=(\sum_{n=1}^{\infty}|x_{n}a_{n}|^{p})^{1/p}<\infty \}$ and $\sum_{n=1}^{\infty}a_{n}^{p}<\infty,$ then for every choice of natural numbers $r_{1},\cdot \cdot \cdot,r_{N}$
the operators $T^{r_{1}},\cdot \cdot \cdot, T^{r_{N}}$ will be densely $(d,1)$-distributionally
chaotic. 

In what follows, we will generalize \cite[Corollary 27]{2013JFA} for the operators of form
$$
B_{j}\bigl\langle x_{n}\bigr\rangle_{n\in {\mathbb N}}:=\bigl\langle w_{n}^{j}x_{n+a(n,j)}\bigr\rangle_{n\in {\mathbb N}},\quad \bigl\langle x_{n}\bigr\rangle_{n\in {\mathbb N}}\in X\quad \bigl( j\in {\mathbb N}_{N}\bigr), 
$$
where, for each fixed number $j\in {\mathbb N}_{N},$ $(\omega_{n}^{j})_{n\in {\mathbb N}}$ is a sequence of scalars from the field ${\mathbb K}$ and $(a(n,j))_{n\in {\mathbb N}}$ is an increasing sequence of natural numbers (a generalization is proper even for single operators).
We will assume that $B_{j}\in L(X)$ 
for all $j\in {\mathbb N}_{N}.$ 

To formulate our result, we need some preliminaries about the computing individual orbits of the operators $B_{j}.$ Let the numbers $j\in {\mathbb N}_{N},$ $k,\ l,\ n \in {\mathbb N}$ be given, and let $n\geq l.$ Then, if exists a number $i\in {\mathbb N}$ such that $i+a(i,j)=n,$ then it is uniquely determined. Let
$b_{n}\in {\mathbb K}\setminus \{0\}$ for all $n\in {\mathbb N}.$ Then we have
\begin{align*}
B_{j}^{k}b_{n}e_{n}=b_{n}B_{j}^{k}e_{n}=b_{n}B_{j}^{k-1}\left\{
\begin{array}{l}
0,\quad \mbox{ if there is no } i \in {\mathbb N}\mbox{ such that }i+a(i,j)=n,\\
\omega_{i}^{j}e_{i},\mbox{ if }i \in {\mathbb N}\mbox{ and }i+a(i,j)=n.
\end{array}
\right.
\end{align*}
Put $c_{1}(n,j):=i,$ if $i\in {\mathbb N}$ and $i+a(i,j)=n;$ $c_{1}(n,j):=+\infty,$
otherwise. In the first case, we have $B_{j}^{k}b_{n}e_{n}=b_{n}\omega_{c_{1}(n,j)}^{j}B_{j}^{k-1}e_{c_{1}(n,j)}.$ Set $c_{2}(n,j):=i,$ if $i\in {\mathbb N}$ and $i+a(i,j)=c_{1}(n,j);$ $c_{2}(n,j):=+\infty,$
otherwise. Again, in the first case, we have $B_{j}^{k}b_{n}e_{n}=b_{n}\omega_{c_{1}(n,j)}^{j}\omega_{c_{2}(n,j)}^{j}B_{j}^{k-2}e_{c_{2}(n,j)}.$ Assume that there exists a finite sequence $c_{1}(n,j),\cdot \cdot \cdot, c_{k}(n,j)$ of natural numbers such that
$c_{k}(n,j)=1.$ In this case, we have $B_{j}^{k}b_{n}e_{n}=b_{n}\prod^{k}_{s=1}\omega_{c_{s}(n,j)}^{j}e_{1}.$ To finish this technical part, for given numbers $j\in {\mathbb N}_{N}$ and $k\in {\mathbb N},$ denote by $P_{j,k}$ the set consisting of all integres $n\in {\mathbb N}$ such that
there exists a finite sequence $c_{1}(n,j),\cdot \cdot \cdot, c_{k}(n,j)$ of natural numbers with the properties that
$c_{k}(n,j)=1$ and $b_{n}\prod^{k}_{s=1}\omega_{c_{s}(n,j)}^{j}e_{1}=1.$

Arguing as in the proof of Proposition \ref{da-se-ohladi}, with the vectors $y_{l}:=\sum_{n\in S,n\geq l}b_{n}e_{n}$ ($l\in {\mathbb N}$) and the same choice of number $\epsilon>0$, we can now formulate the following proposition:

\begin{prop}\label{jebi-ga-hak}
Let for each fixed number $j\in {\mathbb N}_{N},$ $(\omega_{n}^{j})_{n\in {\mathbb N}}$ be a given sequence of scalars from the field ${\mathbb K}$ and let $(a(n,j))_{n\in {\mathbb N}}$ be a given increasing sequence of natural numbers.
Suppose that there exist an infinite set $S$ of natural numbers such that
the series $\sum_{n\in S}b_{n}e_{n}$ converges in $X,$ and that the upper density of set
$$
{\mathrm Q}_{g}:=\Bigl\{ k\in {\mathbb N} : \bigl(\forall j\in {\mathbb N}_{N}\bigr)P_{j,k}\cap S\neq \emptyset \Bigr\}
$$
is equal to one. Then the operators $B_{1},\cdot \cdot \cdot, B_{N}$ are densely $(d,1)$-distributionally
chaotic. 
\end{prop}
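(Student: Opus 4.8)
The plan is to follow the proof of Proposition \ref{da-se-ohladi} almost verbatim, reducing everything to an application of Theorem \ref{mmnn}. First I would take $X_0$ to be the span of the basis vectors $\{e_n : n\in\mathbb{N}\}$, which is dense in the (separable) space $X$, and verify condition (a) of Theorem \ref{mmnn}: for each fixed $j\in\mathbb{N}_N$ the backward character of $B_j$ forces $B_j^k e_n=0$ for all sufficiently large $k$, since the chain $n, c_1(n,j), c_2(n,j),\dots$ is strictly decreasing and, once it reaches $1$, one further application of $B_j$ annihilates $e_1$ (because $n+a(n,j)\geq 2$ for every $n$). Hence $\lim_{k\to\infty}B_j^k x=0$ for every $x\in X_0$ and every $j$. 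It then remains to verify condition (b), that is, to produce a $(d,1)$-distributionally unbounded vector for $(B_1,\dots,B_N)$, and for this I would invoke Theorem \ref{sequences}: it suffices to check $(I_{\infty,\cap})$, since the half of its proof governed by $(I_{\infty,\cap})$ exhibits a residual set all of whose elements are $(d,1)$-distributionally $m$-unbounded.

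To verify $(I_{\infty,\cap})$ I would use the vectors $y_l:=\sum_{n\in S,\,n\geq l}b_n e_n$, which tend to $0$ as $l\to\infty$ because $\sum_{n\in S}b_n e_n$ converges, together with a number $\epsilon>0$ chosen, exactly as in Proposition \ref{da-se-ohladi}, so that $d(\langle x_n\rangle_{n\in\mathbb{N}},0)<\epsilon$ forces $|x_1|<1$. The decisive computation is the following. Writing $\phi_j(i):=i+a(i,j)$ and noting that $a(\cdot,j)$ is increasing, $\phi_j$ is strictly increasing and hence injective, so for fixed $k$ and $j$ there is at most one index $n$ with $c_k(n,j)=1$, namely $n=\phi_j^{\,k}(1)$. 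Consequently $B_j^k$ sends at most one of the summands $b_n e_n$ of $y_l$ onto a multiple of $e_1$, and when $n=\phi_j^{\,k}(1)\in P_{j,k}\cap S$ with $n\geq l$ this multiple is exactly $e_1$ by the defining normalization of $P_{j,k}$. Therefore, for every $k\in\mathbb{Q}_g$ large enough that $\phi_j^{\,k}(1)\geq l$ for all $j$ (only finitely many $k$ are excluded, as $\phi_j^{\,k}(1)\to\infty$), the first coordinate of $B_j^k y_l$ equals $1$ for each $j$, whence $d(B_j^k y_l,0)\geq\epsilon$ for all $j$ simultaneously.

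Since $\overline{dens}(\mathbb{Q}_g)=1$, for each $l$ I can choose a strictly increasing sequence $(N_l)$ for which $card(\{1\leq k\leq N_l : (\forall j\in\mathbb{N}_N)\,d(B_j^k y_l,0)>\epsilon\})\geq N_l(1-l^{-1})$, the finitely many discarded small indices being absorbed as $N_l\to\infty$; this is precisely $(I_{\infty,\cap})$ in its metric form. Theorem \ref{sequences} then supplies a $(d,1)$-distributionally $m$-unbounded vector, so condition (b) of Theorem \ref{mmnn} holds, and Theorem \ref{mmnn} yields a dense uniformly $(d,1)$-distributionally irregular manifold for $B_1,\dots,B_N$; in particular these operators are densely $(d,1)$-distributionally chaotic. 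The main obstacle is the bookkeeping in the decisive computation: one must be sure that for each pair $(j,k)$ exactly one summand of $y_l$ reaches $e_1$, so that there is no cancellation and the first coordinate is bounded away from $0$ by a single $\epsilon$ uniform in $j$, $k$ and $l$, and that the indices $\phi_j^{\,k}(1)$ eventually exceed any prescribed $l$, so that the density-one set $\mathbb{Q}_g$ is effectively usable.
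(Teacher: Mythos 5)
Your proposal is correct and follows essentially the same route as the paper, which proves this proposition simply by "arguing as in the proof of Proposition \ref{da-se-ohladi}" with the vectors $y_{l}=\sum_{n\in S,\,n\geq l}b_{n}e_{n}$ and the same choice of $\epsilon$, i.e., Theorem \ref{mmnn} plus verification of $(I_{\infty,\cap})$ via Theorem \ref{sequences}. Your explicit bookkeeping (injectivity of $\phi_{j}$, uniqueness of the summand reaching $e_{1}$, and the eventual growth of $\phi_{j}^{\,k}(1)$) correctly fills in the details the paper leaves implicit.
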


It is beyond the scope of this paper to extend the assertions of \cite[Theorem 4.8(c)]{erdper} and \cite[Corollary 28]{2013JFA}, provided that $(e_{n})_{n\in {\mathbb N}}$ is an unconditional basis of $X$  
(concerning these and previously stated results, we want only to stress that similar statements hold for bilateral backward shifts (see \cite[Theorem 29, Theorem 30, Corollary 31]{2013JFA}), while the same comment and problem can be posed for \cite[Corollary 32]{2013JFA}). 
The interested reader might be also interested in the analysis of disjoint distributionally chaotic properties of composition operators (see \cite{2013JFA}, the doctoral dissertation of \"O. Martin \cite{ma10}, the paper \cite{jusef} by Z. Kamali, B. Yousefi and references cited therein for more details about this subject) and reconsideration of structural results from the paper \cite{gimenez-p} by F. Mart\'inez-Gim\'enez, P. Oprocha and A. Peris for disjoint distributional chaos.

\subsection{$(d,1)$-Distributional chaos for weighted translations on locally compact groups}\label{kuo-jo}

Here, we continue our recent research analyses raised in \cite{chen-marek}-\cite{chen-chen}. In the first part, ending with Theorem \ref{hip-on} and its proof, we assume that $G$ is a second countable locally compact group with a right invariant Haar measure
$\lambda$, and by $L^p(G)\ (1\leq p <\infty)$ we denote the Lebesgue space with respect to
$\lambda ,$ over the field of scalars ${\mathbb K}.$ By $\|\cdot\|_{p}$ we denote the norm on $L^p(G).$
A bounded continuous function $w:G \rightarrow (0,\infty)$ is
called a weight on $G$. Let $a_{j} \in G$ and let $\delta_{a_{j}}$ be
the unit point mass at $a_{j}$ ($ j\in {\mathbb N}_{N}$). A weighted translation operator on $G$ is
a weighted convolution operator $T_{j}\equiv T_{a_{j},w_{j}} : L^p(G) \rightarrow
L^p(G)$ defined by
$$T_{a_{j},w_{j}}(f) := w_{j}T_{a_{j}}(f), \quad f \in L^p(G) \quad  \bigl(j\in {\mathbb N}_{N}\bigr),$$
where $w_{j}$ is a weight on $G$ and $T_{a_{j}}(f)=f*\delta_{a_{j}}\in L^p(G)$ is
the convolution:
$$\bigl(f*\delta_{a_{j}}\bigr)(x) := \int_G f\bigl(xy^{-1}\bigr)d\delta_{a_{j}}(y) = f\bigl(xa_{j}^{-1}\bigr),
\quad x\in G \quad \bigl(j\in {\mathbb N}_{N}\bigr).$$
To simplify notations, we set
$$\varphi_{j,n}:=\prod_{s=1}^{n}w_{j}\ast\delta_{a_{j}^{-1}}^{s},\quad  n\in {\mathbb N},\ j\in {\mathbb N}_{N}.$$

We have the following theorem:

\begin{thm}\label{hip-on}
Suppose that there exist a sequence of compact sets $(K_n)_{n\in {\mathbb N}}$ in $L^p(G)$ with positive measures, a subset $B\subseteq \mathbb{N}$ with $\overline{{\rm dens}}(B)=1,$ and a sequence $(c_n)_{n\in B}$ of scalars from ${\mathbb K}$ such that
$\lim_{n\rightarrow \infty}\|\varphi_{j,n}{_{|K_{k}}}\|_{p}=0$ for all $k\in {\mathbb N}$ and $ j\in {\mathbb N}_{N}$,
as well as
\begin{align}\label{prckojed}
\sum_{n\in B}\bigl|c_n\bigr|\lambda \bigl(K_n\bigr)^{\frac{1}{p}}<\infty
\end{align}
and
\begin{align}\label{prckojed-prim}
\lim_{n\in B}\Biggl\|  \sum_{k\in B}c_{k}T_{j}^{n}\chi_{K_k} \Biggr\|_{p}=\infty,\ j\in {\mathbb N}_{N}.
\end{align}
Set $X_{0}:=span\{\chi_{K_{n}} : n\in {\mathbb N}\}$
and
$\tilde{X}:=\overline{X_{0}},$ where $\chi_{\cdot}$ denotes the characteristic function.
Then the operators $T_{1},\cdot \cdot \cdot,\ T_{N}$ are densely $(d,\tilde{X},1)$-distributionally chaotic.
\end{thm}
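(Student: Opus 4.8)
The plan is to derive the theorem from Theorem \ref{mmnn}, applied with pivot space $\tilde{X}$, combined with the reduction supplied by Proposition \ref{subspace-fric-DISJOINT}. Since $G$ is second countable, $L^{p}(G)$ is separable and hence so is its closed subspace $\tilde{X}$, while $X_{0}=\mathrm{span}\{\chi_{K_{n}} : n\in {\mathbb N}\}$ is dense in $\tilde{X}$ by construction. First I would regard the restricted operators $(T_{j}^{k})_{|\tilde{X}}\in L(\tilde{X},L^{p}(G))$ for $1\leq j\leq N$ and $k\in {\mathbb N}$, and verify the two hypotheses of Theorem \ref{mmnn} for the tuple $(((T_{j}^{k})_{|\tilde{X}})_{k\in {\mathbb N}})_{1\leq j\leq N}$, taking the dense subspace to be $X_{0}$ and the target space to be $Y=L^{p}(G)$.

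For hypothesis (a), namely $\lim_{k\rightarrow \infty}T_{j}^{k}x=0$ for every $x\in X_{0}$ and $j\in {\mathbb N}_{N}$, it suffices by linearity to treat the generators $x=\chi_{K_{k}}$. Using the iterate formula for weighted translations, $T_{j}^{n}\chi_{K_{k}}=\varphi_{j,n}\cdot(\chi_{K_{k}}\ast\delta_{a_{j}^{n}})=\varphi_{j,n}\cdot\chi_{K_{k}a_{j}^{n}}$, and performing the change of variables $x\mapsto xa_{j}^{n}$ in the Haar integral (right invariance of $\lambda$), one obtains the exact identity $\|T_{j}^{n}\chi_{K_{k}}\|_{p}=\|\varphi_{j,n}{_{|K_{k}}}\|_{p}$, which tends to $0$ as $n\rightarrow\infty$ by hypothesis; this gives (a). This computation, where the precise conventions for $\varphi_{j,n}$ enter, is the only genuinely technical point, and is where I expect the main bookkeeping obstacle to lie: one must match the author's definition of $\varphi_{j,n}$ to the iterate $\prod_{s=0}^{n-1}(w_{j}\ast\delta_{a_{j}^{s}})$ and check that the translate $K_{k}a_{j}^{n}$ is absorbed correctly by the substitution.

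For hypothesis (b) I would exhibit an explicit $(d,1)$-distributionally unbounded vector. Set $x:=\sum_{k\in B}c_{k}\chi_{K_{k}}$. The absolute-convergence estimate $\sum_{k\in B}|c_{k}|\,\|\chi_{K_{k}}\|_{p}=\sum_{k\in B}|c_{k}|\lambda(K_{k})^{1/p}<\infty$, which is precisely \eqref{prckojed}, shows that the series converges in $L^{p}(G)$, so $x$ is a well-defined element of $\tilde{X}$. By continuity of each $T_{j}^{n}$ one then has $T_{j}^{n}x=\sum_{k\in B}c_{k}T_{j}^{n}\chi_{K_{k}}$, so that \eqref{prckojed-prim} reads $\lim_{n\in B}\|T_{j}^{n}x\|_{p}=\infty$ for every $j\in {\mathbb N}_{N}$, with one and the same index set $B$ of upper density $1$. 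In the terminology of Definition \ref{DC-unbounded-DISJOINT} this is exactly the assertion that $x$ is $d$-distributionally unbounded of type $1$ (the use of $\cap$), hence a $(d,1)$-distributionally unbounded vector; in particular $x\neq 0$. Thus step (b) is essentially a verbatim reformulation of the hypotheses \eqref{prckojed}--\eqref{prckojed-prim}.

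With (a) and (b) established, Theorem \ref{mmnn} yields a dense uniformly $(d,1)$-distributionally irregular manifold for $(((T_{j}^{k})_{|\tilde{X}})_{k\in {\mathbb N}})_{1\leq j\leq N}$, whence this restricted tuple is densely $(d,1)$-distributionally chaotic. Finally, applying Proposition \ref{subspace-fric-DISJOINT} transfers this conclusion back to the original operators, so that $T_{1},\cdot\cdot\cdot,T_{N}$ are densely $(d,\tilde{X},1)$-distributionally chaotic, as claimed. (Alternatively, one may bypass the last step by noting directly that every nonzero vector of the manifold produced in $\tilde{X}$ is a $(d,1)$-distributionally irregular vector whose defining density conditions involve only $d_{Y}(T_{j}^{k}\cdot,0)$, and is therefore a $(d,\tilde{X},1)$-distributionally irregular vector for $T_{1},\cdot\cdot\cdot,T_{N}$.)
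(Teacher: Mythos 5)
Your proposal is correct and follows essentially the same route as the paper: the paper likewise takes $y=\sum_{n\in B}c_{n}\chi_{K_{n}}$ (convergent by \eqref{prckojed}), deduces $\lim_{n\in B}\|T_{j}^{n}y\|_{p}=\infty$ from \eqref{prckojed-prim}, verifies $\lim_{n\rightarrow\infty}T_{j}^{n}f=0$ for $f\in X_{0}$ from the hypothesis on $\varphi_{j,n}{_{|K_{k}}}$ (citing the proof of a result of Chen for the norm identity you compute directly), and concludes via Theorem \ref{mmnn} together with Proposition \ref{subspace-fric-DISJOINT}.
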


\begin{proof}
By the proof of \cite[Theorem 2.5]{chen-chen} and \eqref{prckojed}, the series $y:=\sum_{n\in B}c_{n}\chi_{K_n}$ absolutely converges in $\tilde{X}.$ On the other hand, it is clear that
\begin{align*}
\bigl\|T_{j}^{n}& y\bigr \|_{p}=\Biggl\| T_{j}^{n} \sum_{k\in B}c_{k}\chi_{K_k} \Biggr\|_{p}=\Biggl\|  \sum_{k\in B}c_{k}T_{j}^{n}\chi_{K_k} \Biggr\|_{p},
\end{align*}
so that an application of \eqref{prckojed-prim} gives that $\lim_{n\in B}\|T_{j}^{n}y\|_{p}=\infty$ for each $j\in {\mathbb N}_{N}.$ Furthermore, since $\lim_{n\rightarrow \infty}\|\varphi_{j,n}{_{|K_{k}}}\|_{p}=0$ for all $k\in {\mathbb N}$ and $ j\in {\mathbb N}_{N}$, the proof of afore-mentioned theorem shows that,  for every $f\in X_{0}$ and $j\in {\mathbb N}_{N},$ we have $\lim_{n\rightarrow \infty}T_{j}^{n}f=0.$ Therefore, the statement follows by applying Theorem \ref{mmnn}, which yields that the operators $(T_{1})_{|\tilde{X}} : \tilde{X}\rightarrow X,\cdot \cdot \cdot,\ (T_{N})_{|\tilde{X}} : \tilde{X}\rightarrow X$  are densely $(d,1)$-distributionally chaotic, and Proposition \ref{subspace-fric-DISJOINT}.
\end{proof}

In the remaining part of paper, we work with the weighted translation operators on the Orlicz space of a locally compact group $G$ with the identity $e$ and a right Haar measure $\lambda$. 
To achieve our aims, we recall the following notion:
a continuous, even and convex function $\Phi:{\Bbb R}\rightarrow {\Bbb R}$ is said to be a Young function iff $\Phi$ satisfies $\Phi(0)=0$, $\Phi(t)>0$ for $t>0$, and $\lim_{t\rightarrow\infty}\Phi(t)=\infty$.
The complementary function $\Psi$ of a Young function $\Phi$ is defined by
$$\Psi(y):=\sup\{x|y|-\Phi(x):x\geq 0\}$$
for $y\in\Bbb{R}$, which is also a Young function. If $\Psi$ is the complementary function of $\Phi$, then
$\Phi$ is the complementary function of $\Psi$, and the Young inequality
$$xy\leq \Phi(x)+\Psi(y)$$
holds for $x,\ y\geq0$. Then, for any Borel function $\Phi$, the set
$$L^\Phi(G):=\left\{f:G\rightarrow \Bbb{K}: \int_G\Phi(\alpha|f|)d\lambda<\infty\ \text{for some $\alpha>0$} \right\}$$
is called the Orlicz space, which is a Banach space under the Luxemburg norm $N_\Phi$,
defined for $f\in L^\Phi(G)$ by
$$N_\Phi(f):=\inf\left\{k>0:\int_G\Phi\left(\frac{|f|}{k}\right)d\lambda\leq1\right\}.$$
The Orlicz space is a generalization of the usual Lebesgue space $L^p(G)$ considered above;
more precisely, if $\Phi(t):=\frac{|t|^p}{p}$, then the Orlicz space $L^\Phi(G)$ is the Lebesgue space $L^p(G)$ ($1\leq p<\infty$).

Over the past decades, the important properties and interesting structures of Orlicz spaces have been investigated intensely
by many authors; cf. \cite{chen-marek} and references cited therein for more details on the subject.
We assume that $G$ is second countable and $\Phi$ is $\Delta_2$-regular, which means that
there exist two finite constants $M>0$ and $t_0>0$ such that $\Phi(2t)\leq M\Phi(t)$ for $t\geq t_0$ when $G$ is compact, and $\Phi(2t)\leq M\Phi(t)$ for all $t>0$ when $G$ is noncompact. For instance, the Young functions $\Phi$ given by
$$\Phi(t):=\frac{|t|^p}{p}\quad(1\leq p<\infty)\qquad \mbox{and}\qquad\Phi(t):=|t|^\alpha(1+|\log|t||)\quad (\alpha>1)$$
are both $\Delta_2$-regular. In the case that $\Phi$ is $\Delta_2$-regular, then the space $C_c(G)$ of all continuous functions on $G$ with compact support is dense in $L^\Phi(G)$.

We define the operators $T_{j}$ for $1\leq j\leq N$ as in the first part of this subsection. Then we have the following assertion, whose proof is very similar to that one of Theorem \ref{hip-on} and which relies upon the fact that $C_c(G)$ is dense in $L^\Phi(G):$ 

\begin{thm}\label{qwea}
Suppose that there exist an absolutely summable sequence $(c_n)_{n\in B}$ of scalars from ${\mathbb K}$ and a subset $B\subseteq \mathbb{N}$ with $\overline{{\rm dens}}(B)=1$ such that for each compact set $K$ of $G$ the following holds:
\begin{align}\label{prckojed-primonja}
\lim_{n\in B}N_{\Phi}\Biggl( \sum_{k\in B}c_{k}T_{j}^{n}\chi_{K}\Biggr) =\infty,\ j\in {\mathbb N}_{N}.
\end{align}
Then the operators $T_{1},\cdot \cdot \cdot,\ T_{N}$ are densely $(d,1)$-distributionally chaotic.
\end{thm}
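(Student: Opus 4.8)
The plan is to deduce the result directly from Theorem \ref{mmnn}, exactly as in the proof of Theorem \ref{hip-on}, systematically replacing the Lebesgue norm $\|\cdot\|_{p}$ by the Luxemburg norm $N_{\Phi}(\cdot)$. First I would record the ambient setting: since $G$ is second countable and $\Phi$ is $\Delta_{2}$-regular, the Orlicz space $X:=L^{\Phi}(G)$ is a separable Banach space and $C_{c}(G)$ is dense in it. I take $X_{0}:=C_{c}(G)$ (equivalently, the linear span of the characteristic functions of compact sets) as the required dense subspace, and I note that each $T_{j}$ is a bounded operator on $L^{\Phi}(G)$, so that $T_{j,k}:=T_{j}^{k}\in L(X)$ for all $k\in {\mathbb N}$ and $j\in {\mathbb N}_{N}$. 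In contrast with Theorem \ref{hip-on}, here $C_{c}(G)$ is dense in the whole space, so there is no need for a proper closed subspace $\tilde{X}$ and Proposition \ref{subspace-fric-DISJOINT} is not invoked; the conclusion will be dense $(d,1)$-distributional chaos on all of $X$.

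Next I would verify condition (a) of Theorem \ref{mmnn}, namely $\lim_{k\rightarrow \infty}T_{j}^{k}f=0$ for every $f\in X_{0}$ and $j\in {\mathbb N}_{N}$. As in the proof of Theorem \ref{hip-on}, this reduces to the decay of the weighted products $\varphi_{j,n}$ on compact sets, now measured in $N_{\Phi}$; the computation is the Orlicz-space counterpart of the estimate used for $L^{p}(G)$ (cf. \cite{chen-marek}), and it extends from characteristic functions of compact sets to all of $C_{c}(G)$ by linearity and density. This supplies a dense set of $(d,1)$-distributionally near to $0$ vectors.

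For condition (b) of Theorem \ref{mmnn} I would exhibit a single $(d,1)$-distributionally unbounded vector. Using that $(c_{n})_{n\in B}$ is absolutely summable and that characteristic functions of compact sets have finite Luxemburg norm, the series $y:=\sum_{k\in B}c_{k}\chi_{K_{k}}$ converges absolutely in $X$, just as the corresponding series in Theorem \ref{hip-on} converges in $\tilde{X}$. Since each $T_{j}^{n}$ is continuous, $T_{j}^{n}y=\sum_{k\in B}c_{k}T_{j}^{n}\chi_{K_{k}}$, and hypothesis \eqref{prckojed-primonja} then gives $\lim_{n\in B}N_{\Phi}(T_{j}^{n}y)=\infty$ for every $j\in {\mathbb N}_{N}$. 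Because the same set $B$ with $\overline{{\rm dens}}(B)=1$ works simultaneously for all $j$, the vector $y$ is $(d,1)$-distributionally unbounded of type $1$. Applying Theorem \ref{mmnn} then yields a dense uniformly $(d,1)$-distributionally irregular manifold for $T_{1},\cdots,T_{N}$, so that these operators are densely $(d,1)$-distributionally chaotic.

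The main obstacle I anticipate is purely analytic rather than structural: unlike $\|\cdot\|_{p}$, the Luxemburg norm is defined through an infimum, so the two norm estimates --- absolute convergence of $\sum_{k\in B}c_{k}\chi_{K_{k}}$ in $N_{\Phi}$ and the decay $T_{j}^{k}f\rightarrow 0$ on $C_{c}(G)$ --- must be obtained from the $\Delta_{2}$-regularity of $\Phi$, which is precisely the hypothesis that makes the triangle-inequality and scaling arguments behave as in the $L^{p}$ case. Once these estimates are secured, the disjointness bookkeeping (the single set $B$ serving all components) and the appeal to Theorem \ref{mmnn} are routine.
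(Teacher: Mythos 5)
Your overall route is the one the paper intends: the paper's own proof of Theorem \ref{qwea} is a single sentence declaring it ``very similar'' to that of Theorem \ref{hip-on}, i.e.\ an application of Theorem \ref{mmnn} with $X_{0}=C_{c}(G)$ (dense in $L^{\Phi}(G)$ by $\Delta_{2}$-regularity, so that Proposition \ref{subspace-fric-DISJOINT} is indeed not needed here), with the distributionally unbounded vector supplied by an absolutely convergent series of the form $y=\sum_{k\in B}c_{k}\chi_{K_{k}}$ together with hypothesis \eqref{prckojed-primonja}. Your treatment of condition (b) of Theorem \ref{mmnn} matches this exactly, including the observation that the single set $B$ of upper density one serves all components $j\in {\mathbb N}_{N}$ simultaneously, which is what makes the vector $(d,1)$-distributionally unbounded of type $1$.

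However, your verification of condition (a) of Theorem \ref{mmnn} has a genuine gap. You assert that the decay $T_{j}^{k}f\rightarrow 0$ for $f\in C_{c}(G)$ ``must be obtained from the $\Delta_{2}$-regularity of $\Phi$''. It cannot be: $\Delta_{2}$-regularity is a property of the Young function alone and says nothing about the weights $w_{j}$. The decay of $T_{j}^{n}f=T_{a_{j}^{n}}M_{\varphi_{j,n}}f$ on $C_{c}(G)$ is governed entirely by the behaviour of the products $\varphi_{j,n}$ on compact sets; in Theorem \ref{hip-on} this is an explicit hypothesis ($\lim_{n\rightarrow\infty}\|\varphi_{j,n}|_{K_{k}}\|_{p}=0$ for all $k$ and $j$), and no analogue of it appears among the stated hypotheses of Theorem \ref{qwea}. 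Without such an assumption condition (a) simply fails in general (take $w_{j}\equiv 1$: then $N_{\Phi}(T_{j}^{n}\chi_{K})$ is constant in $n$). So either the missing decay hypothesis must be imported, which is evidently what the paper's one-line proof tacitly does and what you implicitly do as well, or it would have to be derived from \eqref{prckojed-primonja}, which is impossible since that condition only concerns blow-up. A smaller point: the theorem as printed quantifies over a single compact set $K$ inside \eqref{prckojed-primonja}, so the sum there collapses to $\bigl(\sum_{k\in B}c_{k}\bigr)T_{j}^{n}\chi_{K}$, whereas your vector $y$ uses a sequence $(K_{k})$ of compact sets that is nowhere introduced in the statement; you are silently correcting what is presumably a misprint, and you should say so explicitly and record the precise form of the hypothesis you are actually using.
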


Although we have already exhibited a great deal of comments and observations about problems considered, many other issues and questions can be proposed for disjoint distributional chaos. We want only to raise the problem of finding some sufficient conditions ensuring the validity of
\eqref{prckojed-prim} and \eqref{prckojed-primonja}.  

Finally, we would like to note that
the notion of 
$(\lambda, \tilde{X})$-distributional chaos (reiteratively
$\tilde{X}$-distributional chaos) of type $s$, where $\lambda \in (0,1]$ and $s\in \{1, 2, 2\frac{1}{2}, 3\}$, has been recently introduced in \cite{marek-trio}. Disjoint $(\lambda, \tilde{X})$-distributional chaos (disjoint reiteratively
$\tilde{X}$-distributional chaos) of type $s$ will be considered somewhere else.

\end{document}